\def\beq{\begin{equation}}
\def\eeq{\end{equation}}
\def\ben{\begin{enumerate}}
\def\een{\end{enumerate}}
\def\RR{\mathbb{R}}
\def\CC{\mathbb{C}}
\def\NN{\mathbb{N}}
\def\eps{\varepsilon}
\def\cone{\mathrm{cone}}
\def\id{\mathrm{Id}}
\def\pt{\mathrm{pt}}
\def\J{\mathcal{I}}
\def\ev{\mathrm{ev}}
\def\cC{ {\mathcal C} }
\def\cE{ {\mathcal E} }
\def\cF{\Pi}
\def\cI{ {\mathcal I} }
\def\cM{{\mathcal M}}
\def\cR{{ \mathcal R }}
\def\cS{{\mathcal S} }
\def\cW{{\mathcal W}}
\def\lnss{left nullstellensatz property}
\def\qrr{real radical}
\def\rad{saturation}
\def\chrisT{\ast}
\def\FNSx{F \langle x,x^{\chrisT} \rangle}
\def\chrisA{\FNSx}
\newtheorem{theorem}{Theorem}[section]
\newtheorem{lemma}[theorem]{Lemma}
\newtheorem{thm}[theorem]{Theorem}
\newtheorem{prop}[theorem]{Proposition}
\newtheorem{cor}[theorem]{Corollary}
\theoremstyle{definition}
\newtheorem{remark}[theorem]{Remark}
\newtheorem{exa}[theorem]{Example}
\newtheorem{definition}[theorem]{Definition}
\def\bs{\bigskip}
\def\chrisA{\mathcal A}
\def\FA{\mathfrak F}
\newcommand{\rr}[1]{\sqrt[\mathrm{rr}]{#1}}
\def\RradI{\sqrt[\cR]{I}   }
\numberwithin{equation}{section}
\begin{document}

\title[A NC \ real nullstellensatz]{A Non-commutative Real Nullstellensatz Corresponds to a  Non-commutative Real Ideal; Algorithms}

\author[Cimpri\v c]{Jakob Cimpri\v c${}^1$}
\address{Jakob Cimpri\v c, Department of Mathematics, University of Ljubljana, Jadranska 21, SI-1000 Ljubljana, Slovenia}
\email{cimpric@fmf.uni-lj.si}
\urladdr{http://www.fmf.uni-lj.si/$\sim$cimpric/}
\thanks{${}^1$Research supported by the grant P1--0222 from the Slovenian Research Agency – ARRS}

\author[Helton]{J. William Helton${}^2$}
\address{J. William Helton, Department of Mathematics, University of California San Diego, 9500 Gilman Drive, La Jolla, California 92093-0112, USA}
\email{helton@math.ucsd.edu}
\urladdr{http://math.ucsd.edu/$\sim$helton/}
\thanks{${}^2$Research supported by NSF grants
DMS-0700758, DMS-0757212, and the Ford Motor Co.}

\author[McCullough]{Scott McCullough${}^3$}
\address{Scott McCullough, Department of Mathematics, University of Florida, 490 Little Hall, Gainesville, Florida 32611-8105, USA}
\email{sam@math.ufl.edu}
\urladdr{http://www.math.ufl.edu/$\sim$sam/}
\thanks{${}^3$Research supported by the NSF grant DMS-0758306.}

\author[Nelson]{Christopher Nelson${}^{2\dag}$}
\address{Christopher Nelson, Department of Mathematics, University of California San Diego, 9500 Gilman Drive,
La Jolla, California 92093-0112, USA}
\email{csnelson@math.ucsd.edu}
\thanks{${}^\dag$Much of the  material in this paper is part of the Ph.D. thesis of Christopher Nelson at UCSD}


\subjclass{16W10, 16S10, 16Z05, 14P99, 14A22, 47Lxx, 13J30}

\keywords{noncommutative real algebraic geometry, algebras with involution, free algebras, matrix polynomials, symbolic computation}

\begin{abstract} 

 This article extends the classical Real Nullstellensatz of Dubois and Risler
 to left ideals in free $*-$algebras $\RR \langle x,x^\ast \rangle$ with
 $x=(x_1,\ldots,x_n)$.

 First we introduce notions of the (noncommutative)
 zero set of a left ideal  and of a real left ideal.
  We prove that every element from $\RR \langle x,x^\ast \rangle$
  whose zero set contains the intersection of zero sets of elements
  from a finite subset $S$ of $\RR \langle x,x^\ast \rangle$
 belongs to the smallest real left ideal  containing $S$.

 Next we give an implementable algorithm which
 for every finite $S \subset \RR \langle x,x^\ast \rangle$
 computes the smallest real left ideal containing $S$
 and prove that the algorithm succeeds
 in a finite number of steps.

 Our definitions and some of our results also work for
 other $*$-algebras. As an example we treat real
 left ideals  in  $M_n(\RR[x_1]).$

\end{abstract}

\maketitle

\section{Introduction}
  This article establishes analogs, in the setting of (some)  $*$-algebras,
  of the classical real Nullstellensatz of Dubois and Risler.  Accordingly,
  to state results, it is first necessary to discuss both noncommutative
  zero sets and real ideals and radicals.
  These topics are treated below in Subsections \ref{subsec:zeros} 
  and \ref{subsec:rad} respectively. The introduction concludes
  with a brief discussion of the main results in Subsection 
  \ref{subsec:results}. 
  
  Our approach to Noncommutative Real Algebraic Geometry is motivated
  by \cite{hmp}; for alternative approaches see \cite{sch2} and \cite{mm}.

\subsection{Zero Sets in $*$-Algebras}
 \label{subsec:zeros}
Let $F$ be either $\RR$ or $\CC$ with complex conjugation as involution.
Let $\chrisA$ be a unital associative $F$-algebra with involution $\ast$,
or \textit{$\ast$-algebra} for short.
Let $V$ be a pre-Hilbert space, i.e. an $F$-vector space with an inner
product.  A mapping $\pi$ of $\chrisA$ into the set of $F$-linear operators defined on $V$
is said to be a (unital) $\ast$-\textit{representation} of $\chrisA$
on $V$ if $\pi(1)=1$ and
it satisfies the familiar axioms:
\[
 \begin{split}
  \pi(\alpha_1 a_1+\alpha_2 a_2) v = &\alpha_1 \pi(a_1) v+\alpha_2
  \pi(a_2) v \\
\pi(a_1 a_2)v=&\pi(a_1) \pi(a_2) v \\
\langle \pi(a) v_1,v_2 \rangle = &
\langle v_1,\pi(a^\ast) v_2 \rangle
\end{split}
\]
 for every $a,a_1,a_2 \in \chrisA$, $\alpha_1,\alpha_2 \in F$ and $v,v_1,v_2 \in V$.

Let $\cR$ be the {\it class of all
$\ast$-representations }  of the $\ast$-algebra $\chrisA$.
Usually, we are only interested in some subclass of ``well-behaved"
$\ast$-representations, such as the subclass $\cF$
of {\it all finite-dimensional \linebreak
$\ast$-representations}. In the following let $\cC$
be a fixed subclass of $\cR$.

A $\cC$-\textit{point} of $\chrisA$ is an ordered pair $(\pi,v)$
such that $\pi \in \cC$ and $v \in V_\pi$.
Write $\pt_{\cC}(\chrisA)$ for the
set of all $\cC$-points of the $\ast$-algebra $\chrisA$.
For every subset $S$ of $\chrisA$ write
\[
V_{\cC}(S):=\{(\pi,v) \in \pt_{\cC}(\chrisA) \mid \pi(s)v=0 \mbox{ for every } s \in S\}.
\]
Clearly, $V_{\cC}(S)=V_{\cR}(S) \cap \pt_{\cC}(\chrisA)$.
For a subset $T$ of $\pt_{\cR}(\chrisA)$, let
\[
\J(T) := \{a \in \chrisA \mid \pi(a)v=0 \mbox{ for every } (\pi,v) \in T\}.
\]
Note that $\J(T)$ is always a left ideal.

Now we give three examples.

\begin{exa}
\label{ex:freeAlg}
  Let $\FA=F\langle x,x^* \rangle$ denote the
  {\bf free $\ast$-algebra}
   on  $x=(x_1, \cdots, x_g)$.
 Given a $g$-tuple $X=(X_1,\dots,X_g)$ of same size square matrices over $F$, write
 $\pi_X(p):= p(X)$, where $p(X)$ is the natural evaluation of $p$ at
 $X$.  It is evident that $\pi_X$ is a $\ast$-representation
 of $\FA$ on the  Hilbert space $F^N$ ($N$ is the size of $X$) and is thus an element of the class $\cF$.
 Conversely,  every element $\pi$ of  $\cF$ is equal to $\pi_X$ for some $g$-tuple $X$ (namely $X_j=\pi(x_j)$).
 Therefore, the $\cF$-points of $\FA$ can be identified with pairs $(X,v)$
 with $v$ being in $F^N$.
 For  $S \subset \FA$ we have
\[
V_\cF(S)= \{ (X,v) \mid \ p(X)v=0 \ \mbox{ for every }  \ p \in S \}.
\]
For a subset $T$ of $\pt_{\cF}(\FA)$ we have
\[
\J(T) = \{p \in \FA \mid  p(X)v=0 \mbox{ for every } (X,v) \in T \}.
\]
 As we shall see,  in the case of $\FA$, for many purposes $\cF$ is a well-behaved
  subclass of $\cR$. \qed
\end{exa}

\begin{exa}
\label{ex:matpoly}
Let $F[x]$ denote the algebra of all polynomials in variables $x=(x_1, \cdots, x_g)$
with coefficients from $F \in \{\RR,\CC\}$.
For every $n$, let $M_n(F[x])$ denote the algebra of $n \times n$ matrices
with entries in $F[x]$.
The involution $\bar{ }$ on $F[x]$ conjugates the coefficients
and the involution $\ast$ on $M_n(F[x])$
is the conjugated transpose, i.e. $[p_{ij}]^\ast =[\overline{p_{ji}}]$.

For every point $a \in \RR^g$ its evaluation mapping $\ev_a \colon M_n(F[x]) \to
M_n(F)$ defined by $\ev_a([p_{ij}]):=[p_{ij}(a)]$, is a $\ast$-representation of $M_n(F[x])$
on $F^n$. (The evaluations in complex points need not be $\ast$-representations.)
The class $\cE:=\{\ev_a \mid a \in \RR^g\}$ is a proper subclass of $\cF$.
Note that the $\cE$-points of $M_n(F[x])$
can be identified with pairs $(a,v)$ where $a \in \RR^g$ and $v \in F^n$,
i.e. $\pt_{\cE}(M_n(F[x]))=\RR^g \times F^n$.
For $S \subset M_n(F[x])$ we have
\[
V_\cE(S)= \{ (a,v) \in \RR^g \times F^n  \mid p(a)v=0 \ \mbox{ for every }  \ p \in S \}.
\]
For a subset $T$ of $\RR^g \times F^n$ we have
\[
\J(T) = \{p \in M_n(F[x]) \mid  p(a)v=0 \mbox{ for every } (a,v) \in T \}.
\]
This example also makes sense for $g=0$. In this case $F[x]=F$, so that $M_n(F[x])=M_n(F)$.
Moreover, $\RR^g=\{0\}$, so the only element of $\cE$ is $\id \colon M_n(F) \to M_n(F)$.
\qed
\end{exa}

\begin{exa}
The polynomial algebra $F[y]$, $y=(y_1,\ldots,y_g)$, $F \in \{\RR,\CC\}$,
with involution $y_i^\ast=-y_i$ for $i=1,\ldots,g$ and $\alpha^\ast=\bar{\alpha}$ for $\alpha \in F$ has
a natural $\ast$-representation $\pi_0$ acting
on the Schwartz space $\cS(\RR^g,F)$ of rapidly decreasing functions.
It assigns to each $y_i$ the partial derivative $\frac{\partial}{\partial t_i}$ so each $\pi_0(p)$ is the partial differential operator $p(D)$.
The set of $\{\pi_0\}$-points is $\pt_{\{\pi_0\}}(\cW_g)=\{\pi_0\} \times \cS(\RR^g,F)$
which can be identified with $\cS(\RR^g,F)$.
For every $S \subseteq \RR[y]$ we have
\[
V_{\{\pi_0\}}(S)= \{ \psi \in  \cS(\RR^g,F) \mid \pi_0(p) \psi=0 \ \mbox{ for every }  \ p \in S \}
\]
which is the set of all solutions of the partial differential equations from $S$.
For a subset $T$ of $\cS(\RR^g,F)$ we have
\[
\J(T) = \{p \in \RR[y] \mid  \pi_0(p)\psi=0 \mbox{ for every } \psi \in T \}
\]
which is the set of all partial differential equations whose solution sets contain $T$.
(We will not discuss this example in other sections but
see \cite{ss} for a Nullstellensatz in the spirit of this paper.
The definitions can also be extended to partial differential equations with non-constant
coefficients but we are not aware of any results in this direction.) \qed
\end{exa}

\subsection{Radicals and Noncommutative Real Ideals}
 \label{subsec:rad}
  For a left ideal $I$ of $\chrisA$ and a class 
  $\cC$ of $\ast$-representations of $\chrisA$, we call the radical
\[
\sqrt[\cC]{I} := \J(V_{\cC}(I)) 
\]
the \textit{$\cC$-{\rad}} of $I$.
 Evidently $\sqrt[\cC]{I}$ is a left ideal. 
We say that $I$ \textit{has the  \lnss \ for $\cC$-points}
if $\sqrt[\cC]{I}=I$.
Lemma \ref{lem:cradicals} lists the basic facts.

\begin{lemma}
 \label{lem:cradicals}
 Let $\cC$ be a representation class and $I$ a left ideal of $\chrisA$.

 The radical $\sqrt[\cC]{I}$ is the smallest left ideal
 which contains $I$ and has the \lnss \ for $\cC$-points.

 For every subset $S$ of $\chrisA$, $V_{\cC}(S)=V_{\cC}(I_S)=V_{\cC}(\sqrt[\cC]{I_S})$
 where $I_S$ is the left ideal of $\chrisA$ generated by $S$.

 If $I \subseteq I'$ then $\sqrt[\cC]{I}\subseteq \sqrt[\cC]{I'}$.
 If $\cC \subseteq \cC'$ then $\sqrt[\cC']{I} \subseteq \sqrt[\cC]{I}$.
 
 For every subset $T$ of $\pt_{\cC}(\chrisA)$ we have that $\sqrt[\cC]{\J(T)}=\J(T)$.
\end{lemma}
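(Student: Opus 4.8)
The lemma is a purely formal consequence of the fact that the maps $V_{\cC}$ and $\J$ form a Galois connection between the subsets of $\chrisA$ and the subsets of $\pt_{\cC}(\chrisA)$, each ordered by inclusion. The plan is therefore to first record four elementary facts: the two monotonicity statements, that $S\subseteq S'$ implies $V_{\cC}(S)\supseteq V_{\cC}(S')$ and that $T\subseteq T'$ implies $\J(T)\supseteq\J(T')$; and the two ``extensive'' inclusions, $S\subseteq\J(V_{\cC}(S))$ for every $S\subseteq\chrisA$ and $T\subseteq V_{\cC}(\J(T))$ for every $T\subseteq\pt_{\cC}(\chrisA)$. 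The monotonicity claims are immediate from the definitions, and the extensive inclusions are one-line unwindings of the definitions: if $s\in S$ then $\pi(s)v=0$ for every $(\pi,v)\in V_{\cC}(S)$, so $s\in\J(V_{\cC}(S))$, and symmetrically for $T$. From these one obtains the closure identities $\J V_{\cC}\J=\J$ and $V_{\cC}\J V_{\cC}=V_{\cC}$ by applying an order-reversing map to one extensive inclusion and combining with the other.

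With this machinery in place I would dispatch the assertions in a convenient order. For the last assertion, apply $\J$ to $T\subseteq V_{\cC}(\J(T))$ to get $\sqrt[\cC]{\J(T)}=\J(V_{\cC}(\J(T)))\subseteq\J(T)$, and use $\J(T)\subseteq\J(V_{\cC}(\J(T)))=\sqrt[\cC]{\J(T)}$ for the reverse containment; this is exactly the identity $\J V_{\cC}\J=\J$. For the monotonicity in $I$: $I\subseteq I'$ gives $V_{\cC}(I)\supseteq V_{\cC}(I')$, hence $\J(V_{\cC}(I))\subseteq\J(V_{\cC}(I'))$, i.e. $\sqrt[\cC]{I}\subseteq\sqrt[\cC]{I'}$. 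For the monotonicity in $\cC$: when $\cC\subseteq\cC'$ one has $V_{\cC}(I)=V_{\cC'}(I)\cap\pt_{\cC}(\chrisA)\subseteq V_{\cC'}(I)$, so $\J(V_{\cC'}(I))\subseteq\J(V_{\cC}(I))$, i.e. $\sqrt[\cC']{I}\subseteq\sqrt[\cC]{I}$.

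For the first assertion, $\sqrt[\cC]{I}=\J(V_{\cC}(I))$ is a left ideal (as already noted in the text, $\J(\cdot)$ always is), it contains $I$ by the extensive inclusion, and it has the \lnss\ for $\cC$-points because $\sqrt[\cC]{\sqrt[\cC]{I}}=\sqrt[\cC]{\J(V_{\cC}(I))}=\J(V_{\cC}(I))=\sqrt[\cC]{I}$ by the last assertion applied to $T=V_{\cC}(I)$. Minimality then follows from monotonicity in $I$: if $J$ is any left ideal with $I\subseteq J$ and $\sqrt[\cC]{J}=J$, then $\sqrt[\cC]{I}\subseteq\sqrt[\cC]{J}=J$. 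For the second assertion, $S\subseteq I_S$ gives $V_{\cC}(S)\supseteq V_{\cC}(I_S)$, while the reverse inclusion uses that every element of $I_S$ is a finite sum $\sum_i a_i s_i$ with $a_i\in\chrisA$ and $s_i\in S$, so that $(\pi,v)\in V_{\cC}(S)$ forces $\pi\bigl(\sum_i a_i s_i\bigr)v=\sum_i\pi(a_i)\pi(s_i)v=0$; hence $V_{\cC}(S)=V_{\cC}(I_S)$, and then $V_{\cC}(I_S)=V_{\cC}(\J(V_{\cC}(I_S)))=V_{\cC}(\sqrt[\cC]{I_S})$ by the identity $V_{\cC}\J V_{\cC}=V_{\cC}$.

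I do not expect a genuine obstacle here; the lemma is bookkeeping. The only points requiring a little care are getting the direction of the inclusions right in the statement about $\cC\subseteq\cC'$ (enlarging the representation class enlarges the zero‑point set of a fixed $I$ and hence shrinks the radical), and keeping track of the fact that $\J$ is only ever applied to subsets of $\pt_{\cC}(\chrisA)$ — which is precisely the domain on which $T\subseteq V_{\cC}(\J(T))$ is valid — so that all the composed maps that appear are legitimate.
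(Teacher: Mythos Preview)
Your proof is correct and follows essentially the same approach as the paper: the paper's own proof simply lists the five properties (a)--(e) you call monotonicity and extensivity (i.e.\ the Galois connection axioms) and declares the claims to be straightforward consequences, while you actually carry out those straightforward deductions. The only difference is level of detail, not strategy.
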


\begin{proof}
All claims are straightforward consequences of the following properties:
\begin{enumerate}
\item[(a)] if $\cC \subseteq \cC'$ then $V_{\cC}(S) \subseteq V_{\cC'}(S)$,
\item[(b)] if $S \subseteq S'$ then $V_{\cC}(S') \subseteq V_{\cC}(S)$,
 \item[(c)] if $T \subseteq T'$ then $\J(T') \subseteq \J(T)$,
\item[(d)] $S \subseteq \J(V_{\cC}(S))$,
\item[(e)] $T \subseteq V_{\cC}(\J(T))$.
\end{enumerate}
\end{proof}

In addition to shedding light on the basic question of
which ideals have the \lnss \ for $\cC$-points, we would
also like to find an algebraic description of the $\cC$-{\rad}
similar to the notion of real radical in classical
real algebraic geometry,
see \cite[Definition 6.4 and Theorems 6.5 and 6.7]{lam}
or Example \ref{ex:classical} below.

These considerations motivate the following definitions.
A left ideal $I$ of $\chrisA$ is said to be \textit{real}
if for every $a_1,\ldots,a_r$ of $\chrisA$
such that
\[
\sum_{i=1}^r a_i^\ast a_i \in I+I^\ast ,
\]
we have that $a_1,\ldots,a_r \in I$.
An intersection of a family of real ideals is a real ideal.
For a left ideal $J$ of $\chrisA$ we call the ideal
\[
\rr{J} = \bigcap_{I \supseteq J,I \text{ real}} I
=\text{the smallest real ideal containing } J
\]
\textit{the {\qrr}} of $J$.
Here are the basic properties.

\begin{lemma}
 \label{lem:qradicals}
Let $\cC$ be a representation class and $I$ a left ideal of $\chrisA$.

If $I$  has the \lnss \ for $\cC$-points, then $I$ is a real ideal.

The $\cC$-{\rad} of $I$ contains the {\qrr} of $I$.
\end{lemma}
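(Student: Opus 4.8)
The plan is to establish the two assertions in turn, the second being a quick corollary of the first together with Lemma~\ref{lem:cradicals}.

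For the first assertion, assume $I=\sqrt[\cC]{I}=\J(V_{\cC}(I))$ and let $a_1,\dots,a_r\in\chrisA$ satisfy $\sum_{i=1}^r a_i^\ast a_i\in I+I^\ast$. To conclude that each $a_i\in I=\J(V_{\cC}(I))$ I must show $\pi(a_i)v=0$ for an arbitrary $(\pi,v)\in V_{\cC}(I)$. Fixing such a point and writing $\sum_{i=1}^r a_i^\ast a_i=b+c^\ast$ with $b,c\in I$, I would compute $\langle\pi(\sum_{i=1}^r a_i^\ast a_i)v,v\rangle$ in two ways. Expanding the left-hand expression via the $\ast$-representation axioms gives $\sum_{i=1}^r\langle\pi(a_i)v,\pi(a_i)v\rangle=\sum_{i=1}^r\|\pi(a_i)v\|^2$; expanding instead as $\langle\pi(b)v,v\rangle+\langle\pi(c^\ast)v,v\rangle=\langle\pi(b)v,v\rangle+\overline{\langle\pi(c)v,v\rangle}$ gives $0$, since $b,c\in I$ and $(\pi,v)\in V_{\cC}(I)$ force $\pi(b)v=\pi(c)v=0$. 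Positive definiteness of the inner product then yields $\pi(a_i)v=0$ for every $i$, so $a_i\in I$ and $I$ is real.

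For the second assertion, I would observe that $\sqrt[\cC]{I}=\J(V_{\cC}(I))$ is of the form $\J(T)$, hence by Lemma~\ref{lem:cradicals} it has the \lnss{} for $\cC$-points; by the first assertion it is therefore a real left ideal, and it obviously contains $I$. Since $\rr{I}$ is by definition the smallest real ideal containing $I$, this gives $\rr{I}\subseteq\sqrt[\cC]{I}$.

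I do not expect a genuine obstacle here; the only step needing a little care is the routine verification that the three representation axioms turn $\sum_i\langle\pi(a_i^\ast a_i)v,v\rangle$ into a sum of squared norms and that $\langle\pi(c^\ast)v,v\rangle=\overline{\langle\pi(c)v,v\rangle}$, after which both assertions reduce to bookkeeping with the definitions and Lemma~\ref{lem:cradicals}.
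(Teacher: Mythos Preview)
Your proof is correct and follows essentially the same approach as the paper: the first assertion is handled by the same inner-product computation $\sum_i\|\pi(a_i)v\|^2=\langle\pi(b)v,v\rangle+\langle v,\pi(c)v\rangle=0$, and the second by observing that $\sqrt[\cC]{I}$ itself has the \lnss\ for $\cC$-points and is therefore real. The only cosmetic difference is that for the second assertion you invoke the last claim of Lemma~\ref{lem:cradicals} (that $\sqrt[\cC]{\J(T)}=\J(T)$) whereas the paper invokes its first claim (that $\sqrt[\cC]{I}$ is the smallest ideal containing $I$ with the \lnss); both routes land at the same point.
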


\begin{proof}
  To prove the first claim,
  suppose $I$ has the \lnss,  each of $a_1,\dots,a_r$ are in $\chrisA,$
   $b,c$ are  in $I$
  and $\sum a_j^* a_j =b +c^*$.  Let $(\pi,v)\in\cC$ be given.  In
  particular, $\pi(b)v=0=\pi(c)v.$ Thus,
\[
\begin{split}
\sum \langle \pi(a_j)v,\pi(a_j)v\rangle
 = & \sum \langle \pi(a_j^* a_j)v,v\rangle \\
 = & \langle \pi(b)v,v\rangle + \langle v,\pi(c)v\rangle \\
 = & 0.
\end{split}
\]
It follows that $\pi(a_j)v=0$ and therefore $a_j \in  \cI(V_{\cR}(I)).$
Hence, by the \lnss, $a_j\in I$ and $I$ is a real ideal.

To prove the second claim note that the first claim implies that
the smallest left ideal which contains $I$ and has \lnss \ for $\cC$-points
contains the smallest real left ideal which contains $I$.
Now use the first claim of Lemma \ref{lem:cradicals}
and the definition of the {\qrr} to finish the proof.
\end{proof}

Lemmas \ref{lem:cradicals} and \ref{lem:qradicals} imply that
\begin{equation*}
   I \subseteq \rr{I} \subseteq \sqrt[\cR]{I}
  \subseteq \sqrt[\cC]{I}
\end{equation*}
for  every representation class $\cC$ and every left ideal $I$ of $\chrisA$.

\subsection{Summary of Results}
 \label{subsec:results}

The main result of this paper is

\begin{thm} 
\label{thm:main}
\mbox{} \\
 A finitely generated left ideal $I$ in $\FNSx$
  satisfies the 
\lnss \ for $\Pi$-points
if and only if $I$ is real. Moreover, 
\[
  I  \subseteq  \rr{I} = \sqrt[\cR]{I}
  = \sqrt[\Pi]{I}.
\]
\end{thm}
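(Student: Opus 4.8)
The plan is to prove the theorem by establishing the chain of inclusions
\[
I \subseteq \rr{I} \subseteq \sqrt[\cR]{I} \subseteq \sqrt[\Pi]{I} \subseteq I,
\]
where the first three inclusions are already recorded (they hold for every left ideal by Lemmas \ref{lem:cradicals} and \ref{lem:qradicals}), so the entire content is the final inclusion $\sqrt[\Pi]{I} \subseteq I$ \emph{under the hypothesis that $I$ is real and finitely generated}. Once this is done, the ``if and only if'' is immediate: if $I$ has the \lnss \ for $\Pi$-points then $I = \sqrt[\Pi]{I}$ is real by Lemma \ref{lem:qradicals}, and conversely if $I$ is real then the chain collapses to equalities, so in particular $\sqrt[\Pi]{I} = I$, i.e. $I$ has the \lnss. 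Thus the proof reduces to a single implication.

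To prove $\sqrt[\Pi]{I} \subseteq I$ for a real finitely generated left ideal, I would argue by contrapositive: given $p \notin I$, I want to produce a finite-dimensional representation $\pi_X$ and a vector $v$ with $\pi_X(q)v = 0$ for all $q \in I$ but $\pi_X(p)v \neq 0$, which shows $p \notin \sqrt[\Pi]{I} = \J(V_\Pi(I))$. The natural device is a GNS-type construction relative to the quotient module $\FNSx / I$: one seeks a positive linear functional (equivalently, an inner product) on $\FNSx / I$ for which $p + I$ is not in the kernel, and the \emph{realness} of $I$ is exactly what is needed to guarantee the map $a + I \mapsto \langle a + I, a + I \rangle$ is faithful enough — realness says that the ``length-zero'' vectors in the would-be inner product are forced into $I$, i.e. are zero in the quotient. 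The serious issue is \emph{finite-dimensionality}: the quotient $\FNSx / I$ need not be finite dimensional, and an arbitrary GNS representation lands in $\cR$, not $\Pi$. This is where finite generation of $I$ must be used: one restricts attention to a large-but-finite-dimensional subspace (polynomials up to some degree $d$, for $d$ chosen in terms of the generators of $I$ and the degree of $p$), builds the inner product and the truncated action of the $x_j$ there, and verifies that the truncation error does not interfere with the relations coming from $I$ or with $p(X)v \neq 0$. Compressing the left-multiplication operators to this finite-dimensional space and checking that the compressed tuple $X$ still satisfies $q(X)v=0$ for $q\in I$ — using that $I$ is generated in bounded degree and $v$ corresponds to $1 + I$ — is the technical heart.

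The main obstacle, then, is precisely this passage from an infinite-dimensional GNS representation to a genuine finite-dimensional one: producing a positive functional on a suitable finite-dimensional truncation of $\FNSx/I$ whose radical is controlled by $I$ (here realness enters), and simultaneously arranging that the truncated/compressed multiplication operators reproduce the vanishing $q(X)v = 0$ for the finitely many generators $q$ of $I$ while keeping $p(X)v \neq 0$. I expect the degree bookkeeping — choosing the truncation degree $d$ large enough relative to $\deg p$ and the degrees of the generators so that all relevant products are computed exactly before truncation — to be the delicate point, together with checking positivity of the constructed functional. Everything else (the equivalences among the radicals, and the direction ``\lnss $\Rightarrow$ real'') is already handled by the lemmas in the excerpt.
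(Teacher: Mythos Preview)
Your plan for the equivalence is essentially the paper's, just reorganized. The paper separates the two concerns you fuse: Proposition~\ref{prop:finite} carries out exactly the compression you describe (for \emph{any} finitely generated $I$, a $\cR$-point $(\pi,v)$ is projected to the finite-dimensional span of $\{p(X)v:\deg p\le d\}$, and one checks $r(X')v=r(X)v$ for $\deg r\le d$), giving $\sqrt[\Pi]{I}=\sqrt[\cR]{I}$ without any reference to realness; separately, Theorem~\ref{thm:existsLfinite} builds a positive hermitian $L$ on \emph{all} of $\FA$ with $\{a:L(a^*a)=0\}=I$ (realness enters via a Hahn--Banach/LMI separation in degree $\le 2d-2$, then $L$ is extended degree by degree), and ordinary GNS produces an $\cR$-point with $\J(\{(\pi,v)\})=I$. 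Your single-shot truncated-GNS route would work, but note that you cannot get away with the base degree alone: if $\deg p$ exceeds the generator bound $d$ you must extend $L$ up to degree $2\deg p$, which is the same inductive work the paper does in Theorem~\ref{thm:existsLfinite}. The paper's split buys a clean, reusable statement $\sqrt[\Pi]{I}=\sqrt[\cR]{I}$ that holds with no realness hypothesis.

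There is, however, a genuine gap in your reduction. Your chain $I\subseteq\rr{I}\subseteq\sqrt[\cR]{I}\subseteq\sqrt[\Pi]{I}\subseteq I$ collapses only when $I$ is already real; for a general finitely generated $I$ it does not close, and the ``Moreover'' clause asserts $\rr{I}=\sqrt[\cR]{I}=\sqrt[\Pi]{I}$ for \emph{all} finitely generated $I$, not only real ones. To derive this from the equivalence you must apply your result to $J=\rr{I}$ (using $V_\Pi(I)=V_\Pi(\rr{I})$, hence $\sqrt[\Pi]{I}=\sqrt[\Pi]{\rr{I}}$), and for that you need $\rr{I}$ itself to be finitely generated. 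This is not automatic---$\FA$ is not left noetherian---and the paper obtains it as a nontrivial byproduct of the Real Algorithm (Theorem~\ref{thm:algorStops}): starting from generators of $I$ in degree $\le d$, the algorithm produces generators of $\rr{I}$ still in degree $\le d$. Your remark that ``everything else\dots is already handled by the lemmas'' is therefore not right; Lemmas~\ref{lem:cradicals} and~\ref{lem:qradicals} give only the inclusions, and finite generation of $\rr{I}$ is the missing piece you must invoke to finish.
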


In Section \ref{sec:prepthy}, we prove several technical results about the $\ast$-algebra $\FNSx$
which are similar to Gr\" obner bases computations. 

In Section \ref{sec:alg} we present an (implementable and effective) algorithm for computing
the real radical of a finitely generated left ideal in $\FNSx$. Its theoretical
importantance is in the fact that the result is always a finitely generated left ideal.
Therefore, the second part of Theorem \ref{thm:main} follows from the first.

The first part of Theorem \ref{thm:main} is proved in Section \ref{sec:pfmain}. The idea
is to show that every finitely generated real ideal in $\FNSx$ 
is of the form $\{a \in \FNSx \mid L(a^\ast a)=0\}$ for some positive functional $L$.

In Section \ref{sec:sqrtRI} we shift our attention to general $\ast$-algebras.
We prove a topological characterization of the $\cR$-saturation and develop
a (non-effective) iterative procedure for computing the real radical.

In Section \ref{sec:examples} we prove that all left ideals 
 $I$ in $M_n(F[x_1])$ satisfy $I \subseteq \rr{I} = \sqrt[\cR]{I} = \sqrt[\cE]{I}$.
The case of several variables remains open.

\section{Ideals and their Complements}
\label{sec:prepthy}

In this section we prove  a collection of basic facts which constitute the backbone of the main results of this paper.
We begin by stating an appealing theorem, Theorem \ref{thm:lowdeg}, 
which underlies the success of our algorithm given in \S \ref{sec:alg}.
In the course of its proof we lay out essentials for our main theorem. 
Recall that $F$  is $\RR$ or $\CC$ and
 $\FA=F\langle x,x^*\rangle.$

\begin{definition}
\label{def:FAd}
Let $\FA_{d}$ be the vector
space spanned by all polynomials in
$\FA$ with degree bounded by $d$.
In general, given a vector subspace $V \subseteq \FA$, $V_{d}$
denotes the space
of elements of $V$ with degree bounded by $d$.
\end{definition}

\begin{exa}If $V = \FA x_1x_1$,
then $V_{3}$ is the space
\[V_3 = \mathrm{span}\{x_1x_1x_1, x_1^*x_1x_1, x_2x_1x_1, x_2^*x_1x_1, x_1x_1\}.\]
\qed
\end{exa}

\begin{exa}Let $x = (x_1, x_2)$ and let $W = \FA(x_1x_1 + 1)$. Each
  element of $W$ is of the form
$a(x_1x_1 + 1)$ for some $a \in \FA$. If $a$ is nonzero, then the
degree of $a(x_1x_1 + 1)$
is equal to $2 + \deg(a)$.  Therefore all elements of $W$ of degree bounded by $3$ are of the form
\[W_3 = \{a(x_1x_1 + 1):\ \deg(a) \leq 1 \}. \]
Therefore $W_3$ is the spanned by the basis
\[\{x_1(x_1x_1+1), x_1^*(x_1x_1+1), x_2(x_1x_1+1), x_2^*(x_1x_1+1), x_1x_1+1\}. \]
\qed
\end{exa}

\begin{definition}
\label{def:oplus}
Let $V$ be a vector space and let $W_1$ and $W_2$ be vector subspaces of $V$.
If $W_1 \cap W_2 = (0)$, let $W_1 \oplus W_2$ denote the space $W_1 + W_2 \subseteq V$.
If $W_1 \cap W_2 \supsetneq (0)$, then
$W_1 \oplus W_2$ is undefined.
\end{definition}

A main result of this section is

\begin{thm}
\label{thm:lowdeg}
Let $I \subseteq \FA$ be a finitely-generated
left ideal.
Suppose $I$ is generated by polynomials
$p_1, \ldots, p_k \in \FA$ with $\deg p_i \leq d$ for each $i$. 
Then the following are equivalent.

\begin{enumerate}
\item $I$ is a real ideal.

\item If  $q_1, \ldots, q_k$ are polynomials and
 $\sum_{i=1}^{\ell}q_i^*q_i \in I + I^*,$ then $q_j\in I$ for each
 $j$.

\item If $V$ is a subspace of $\FA_{d-1}$ such that
\[\FA_{d-1} = I_{d-1} \oplus V\]
and $v_j\in V$ are polynomials such that
$\sum_{i=1}^{\ell} v_i^*v_i \in I + I^*,$ then each $v_j=0$.
\end{enumerate}
\end{thm}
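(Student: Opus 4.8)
The plan is to prove the cycle of implications $(1)\Rightarrow(2)\Rightarrow(3)\Rightarrow(1)$, of which the only substantive step is $(3)\Rightarrow(1)$; the other two are essentially unwinding definitions.

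\textbf{The easy implications.} For $(1)\Rightarrow(2)$: by definition a real ideal satisfies exactly the property in (2) except that (2) restricts the generators $q_i$ to an arbitrary finite list rather than exactly $k$ of them — but this is no restriction at all, since if $\sum_{i=1}^\ell q_i^* q_i \in I+I^*$ with some number $\ell$ of terms, the definition of real ideal applies verbatim and forces each $q_j\in I$. (I would note that the index $k$ in the statement of (2) is a typo for a general $\ell$; in any case, the two conditions are literally the same.) For $(2)\Rightarrow(3)$: suppose $\FA_{d-1} = I_{d-1}\oplus V$ and $\sum v_i^* v_i \in I+I^*$ with $v_i\in V$. Then (2) already gives $v_j\in I$, so $v_j \in I \cap \FA_{d-1} = I_{d-1}$; but $v_j\in V$ and $I_{d-1}\cap V = (0)$, hence $v_j=0$.

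\textbf{The hard implication $(3)\Rightarrow(1)$.} Assume (3) and suppose we are given arbitrary $a_1,\dots,a_r \in \FA$ with $\sum_{i=1}^r a_i^* a_i \in I + I^*$; we must show each $a_j\in I$. The issue is that the $a_i$ may have arbitrarily high degree, whereas (3) only controls what happens at degree $d-1$. The strategy is a degree-reduction argument: we want to replace each $a_i$ by its ``remainder'' modulo $I$, landing in a fixed complement $V$ of $I$, and then argue that the degrees of these remainders can be pushed down to $d-1$. Concretely, fix a graded-compatible vector-space complement: choose $V$ with $\FA_{d-1} = I_{d-1}\oplus V$, and more generally arrange (using the Gröbner-like machinery promised for this section, e.g. by choosing a monomial-type basis for $\FA/I$) a complement $\widehat V$ of $I$ in all of $\FA$ with $\widehat V \cap \FA_{d-1} = V$. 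Write each $a_i = c_i + \hat v_i$ with $c_i\in I$, $\hat v_i\in \widehat V$. Since $\sum a_i^* a_i \in I+I^*$ and $c_i\in I$, expanding $\sum(c_i+\hat v_i)^*(c_i+\hat v_i)$ and absorbing all terms containing a $c_i$ into $I + I^*$, we get $\sum \hat v_i^* \hat v_i \in I + I^*$ as well. The crux is then to show that $\sum \hat v_i^* \hat v_i \in I+I^*$ forces $\hat v_i\in \FA_{d-1}$ (hence $\hat v_i\in V$), after which (3) finishes the job, giving $\hat v_i = 0$, i.e. $a_i = c_i\in I$.

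\textbf{Where the difficulty concentrates.} The heart of the matter — and the step I expect to be the main obstacle — is showing that the top-degree parts cannot survive: if $\hat v_i \in \widehat V$ have some maximal degree $e \ge d$, then the degree-$2e$ part of $\sum \hat v_i^* \hat v_i$ is $\sum (\hat v_i^{(e)})^* (\hat v_i^{(e)})$, where $\hat v_i^{(e)}$ is the degree-$e$ homogeneous part of $\hat v_i$. This homogeneous sum of Hermitian squares is a sum of squares in the free algebra, so by the well-known fact that the free $*$-algebra has a ``sum of Hermitian squares $= 0$ $\Rightarrow$ each term $= 0$'' property at the level of leading terms (the analogue of the classical leading-term/Newton-polytope argument — this is exactly the kind of lemma the section is advertising), it must vanish unless it is cancelled by the degree-$2e$ part of an element of $I+I^*$. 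Ruling out such cancellation is where one must invoke the structure of $I$: because $\widehat V$ is a complement of $I$, no nonzero element of $\widehat V$ lies in $I$, and one argues (again via the Gröbner-type leading-term analysis) that a nonzero leading combination of elements of $\widehat V$ cannot be matched by a leading term from $I+I^*$. This yields $\hat v_i^{(e)}=0$ for all $i$, contradicting maximality of $e$ unless $e\le d-1$. So the real work is (a) constructing the graded complement $\widehat V$ compatibly with $V$, and (b) the leading-term cancellation lemma for sums of Hermitian squares against $I + I^*$ — and I would expect the author to have set up precisely these tools in the surrounding ``Ideals and their Complements'' section, so that the proof here reduces to assembling them in the order above.
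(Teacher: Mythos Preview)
Your proposal is correct and matches the paper's approach essentially verbatim: the paper packages your steps (a) and (b) as Proposition~\ref{prop:lowdeg} (the graded complement $\widehat V = \FA V_d^H \oplus V_{d-1}$, together with the conclusion that $\sum q_j^* q_j \in I+I^*$ forces $q_j \in I \oplus V_{d-1}$), with the leading-term cancellation you anticipate supplied by Lemma~\ref{lem:IplusIstarLeading}. Your degree-descent phrasing and the paper's single-step contradiction on the degree of $\sum \hat v_i^* \hat v_i$ are the same argument.
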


The proof of this theorem appears in $\S$ \ref{subsec:pf}.

An important corollary to Theorem \ref{thm:lowdeg} is the following.

\begin{cor}
Let $I \subseteq \FA$ be a finitely-generated
left ideal.
Suppose $I$ is generated by polynomials
$p_1, \ldots, p_k \in \FA$ with $\deg p_i \leq d$ for each $i$. 
Then $I$ is real if and only if whenever
\[\sum_{i=1}^{\ell} q_i^*q_i \in I + I^*, \quad \deg(q_1), \ldots, \deg(q_{\ell}) < d, \] 
then $q_1, \ldots, q_{\ell} \in I$.
\end{cor}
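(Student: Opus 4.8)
The plan is to derive this corollary as a direct consequence of Theorem \ref{thm:lowdeg}, since the corollary is essentially a cleaner restatement of the equivalence $(1)\Leftrightarrow(3)$ there, with condition $(3)$ relaxed from "$v_j\in V$ for a fixed complement $V$ of $I_{d-1}$" to "$\deg(q_j)<d$". First I would observe that the "only if" direction is immediate and requires no degree hypothesis: if $I$ is real, then by definition $\sum_i q_i^*q_i\in I+I^*$ forces $q_1,\dots,q_\ell\in I$ for \emph{any} polynomials $q_i$, in particular for those of degree $<d$. So the content is entirely in the "if" direction.

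For the "if" direction, suppose the stated degree-$<d$ condition holds; I want to conclude $I$ is real, and I will do this by verifying condition $(3)$ of Theorem \ref{thm:lowdeg}. Fix a subspace $V\subseteq\FA_{d-1}$ with $\FA_{d-1}=I_{d-1}\oplus V$, and suppose $v_1,\dots,v_\ell\in V$ satisfy $\sum_i v_i^*v_i\in I+I^*$. Since each $v_j\in V\subseteq\FA_{d-1}$ we have $\deg(v_j)\le d-1<d$, so the hypothesis applies and gives $v_j\in I$ for each $j$. But $v_j\in\FA_{d-1}$ as well, so $v_j\in I\cap\FA_{d-1}=I_{d-1}$. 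Now $v_j\in I_{d-1}\cap V=(0)$, hence $v_j=0$ for all $j$. This is exactly condition $(3)$, so by Theorem \ref{thm:lowdeg} the ideal $I$ is real, completing the proof.

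I do not expect any serious obstacle here; the only thing to be careful about is the existence of a complement $V$, which is guaranteed since $I_{d-1}$ is a subspace of the (infinite-dimensional, but that is irrelevant) vector space $\FA_{d-1}$ — or rather $\FA_{d-1}$ is finite-dimensional, so a complement certainly exists and one simply invokes Theorem \ref{thm:lowdeg} with that choice. One should also note the harmless discrepancy that Theorem \ref{thm:lowdeg}'s condition $(3)$ is phrased with $q$'s indexed $i=1,\dots,\ell$ while being generated by $q_1,\dots,q_k$; this plays no role in the argument above, since we only ever feed in the $v_j$'s directly. In short, the corollary is a one-paragraph deduction from the already-established equivalence $(1)\Leftrightarrow(3)$, repackaged to replace the somewhat artificial "lies in a chosen complement" hypothesis with the intrinsic degree bound.
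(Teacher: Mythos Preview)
Your proof is correct and follows essentially the same route as the paper: both reduce the corollary to the equivalence $(1)\Leftrightarrow(3)$ of Theorem \ref{thm:lowdeg}. Your argument is in fact slightly cleaner than the paper's, which decomposes each $q_i=q_{i,I}+q_{i,V}$ and checks that $\sum_i q_{i,V}^*q_{i,V}\in I+I^*$ to pass between the corollary's condition and condition~(3); you bypass this computation by observing that the ``only if'' direction is immediate from the definition of real, and that for the ``if'' direction the elements $v_j\in V\subseteq\FA_{d-1}$ already satisfy the degree bound, so the hypothesis applies directly.
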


\begin{proof}
 Suppose $q_1, \ldots, q_{\ell}$ have degree less than
$d$ and that
\[\sum_{i=1}^{\ell} q_i^*q_i \in I + I^*.\]
Decompose $\FA_{d-1}$ as
\[\FA_{d-1} = I_{d-1} \oplus V\]
and express each $q_i$ as
\[ q_i = q_{i,I} + q_{i,V}, \quad \quad q_{i,I} \in I_{d-1}, \quad q_{i,V} \in V.\]
Then
\[ \sum_{i=1}^{\ell} q_i^*q_i = \sum_{i=1}^{\ell} \left( q_{i,I}^*q_i + q_{i,V}^*q_{i,I} + q_{i,V}^*q_{i,V}\right) \in I + I^*,\]
which implies that
\begin{equation}
\label{eq:sumQiV}
 \sum_{i=1}^{\ell} q_{i,V}^*q_{i,V} \in I + I^*.
\end{equation}
By Theorem \ref{thm:lowdeg}, $I$ is real 
if and only if (\ref{eq:sumQiV}) implies
that $q_{i,V} = 0$ for each $i$.
However, each $q_{i} \in I$ if and only if
$q_{i,V} = 0$. This proves the corollary.
\end{proof}


\subsection{Proof and Further Facts}
\label{subsec:pf}


We give a string of facts, which typically involve complements 
and the degree of polynomials,
that underlie  proofs of Theorem \ref{thm:lowdeg}.

\begin{definition}
\label{def:FAHd}Let $\FA^H_d$ denote the vector space
of all homogeneous degree $d$ polynomials in
$\FA$. ($0$ is considered homogeneous of all degrees.)
In general, given a vector subspace
$V \subseteq \FA$, $V^H_d$ denotes the space $V \cap \FA^H_d$
of all homogeneous degree $d$ elements of $V$.
\end{definition}

\begin{exa}
\label{exa:nmo1}
Let $x = (x_1, x_2)$ so that
$\FA = F\langle x_1, x_2, x_1^*, x_2^*\rangle$.
If $V = \FA x_1x_1$, then $V^H_3$ is the space
\[V^H_3 = \mathrm{span}\{x_1x_1x_1, x_1^*x_1x_1, x_2x_1x_1, x_2^*x_1x_1\}.\]
\qed \end{exa}

\begin{definition}
\label{def:leadingpoly}For each nonzero $p \in \FA$, the \textbf{leading polynomial} of $p$
is the unique homogeneous polynomial $p'$ such that $\deg(p) = \deg(p')$ and $\deg(p-p') < \deg(p)$.
For a space $V \subset \FA$, let $V_d^{\ell}$ denote the space spanned by the leading polynomials of
all degree $d$ elements of $V$.  Note that $V_d^{\ell}$ is contained in the space $\FA_d^H$.
\end{definition}

\begin{exa}Let $x = (x_1, x_2)$ and let $I = \FA (x_1x_1 + 1) + \FA x_2$. Then $I_2$ is the space
\[I_2 = \mathrm{span}\{x_1x_1 + 1, x_1x_2, x_1^*x_2, x_2x_2, x_2^*x_2, x_2\}.\]
The space spanned by all homogeneous degree two polynomials is
\[I_2^H = \mathrm{span}\{x_1x_2, x_1^*x_2, x_2x_2, x_2^*x_2 \}.\]
The leading polynomial of $x_1x_1 + 1$ is $x_1x_1$ and the leading polynomial of each $zx_2$ is itself, $zx_2$,
where $z = x_1, x_1^*, x_2,$ or $x_2^*$. It follows that
\[I^{\ell}_2 = \mathrm{span}\{x_1x_1, x_1x_2, x_1^*x_2, x_2x_2, x_2^*x_2\}.\]
\qed \end{exa}


\begin{definition}
For every pair of subsets $A$ and $B$ of $\FA$ we write $AB$ for the set of all
finite sums of elements of the form $ab$, $a \in A$, $b \in B$.
\end{definition}

\begin{exa}
\label{ABdef}
Clearly, $\FA_k^H \FA_l^H=\FA_{k+l}^H$ for every $k$ and $l$.
If $\FA_l^H=U \oplus V$ for some vector spaces
$U$ and $V$, then $\FA_k^H \FA_l^H=\FA_k^H U \oplus \FA_k^H V$
(since $\FA_k^H U \cap \FA_k^H V=\{0\}$
by Lemma \ref{eq:LinIndOfHomNcPolys}.)
\qed
\end{exa}

\begin{lemma}
\label{eq:LinIndOfHomNcPolys}Let $p_1, \ldots, p_k \in \FA$ be linearly independent, homogeneous degree $d$ polynomials.
Then
\[q_1 p_1 + \ldots + q_k p_k = 0 \]
for some polynomials $q_1, \ldots, q_k \in \FA$ if and only if each $q_i = 0$.
\end{lemma}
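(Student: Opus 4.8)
The statement to prove is Lemma~\ref{eq:LinIndOfHomNcPolys}: if $p_1,\dots,p_k \in \FA$ are linearly independent, homogeneous of degree $d$, then $\sum q_i p_i = 0$ forces every $q_i=0$. The ``if'' direction is trivial, so the content is the ``only if''. The plan is to reduce to the homogeneous case and then exploit the fact that right multiplication by words is ``injective on the level of monomials''. First I would write each $q_i = \sum_{e} q_{i,e}$ as a sum of its homogeneous components $q_{i,e} \in \FA^H_e$. Since $p_i$ is homogeneous of degree $d$, the term $q_{i,e}p_i$ lies in $\FA^H_{e+d}$, so collecting by degree the single relation $\sum_i q_i p_i = 0$ splits into the relations $\sum_i q_{i,e} p_i = 0$ for each $e$ separately. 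Hence it suffices to prove the lemma under the extra assumption that all $q_i$ are homogeneous of a common degree $e$.

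Now for the homogeneous case, the cleanest route is to pass to a monomial basis. Fix the standard basis of $\FA^H_e$ consisting of words $w$ of length $e$ in the letters $x_1,\dots,x_g,x_1^*,\dots,x_g^*$, so $q_i = \sum_w c_{i,w}\, w$ with scalars $c_{i,w}\in F$. Then
\[
\sum_i q_i p_i = \sum_w \Bigl(\sum_i c_{i,w}\, w p_i\Bigr) = \sum_w w\Bigl(\sum_i c_{i,w} p_i\Bigr).
\]
Each inner sum $\sum_i c_{i,w} p_i$ is a homogeneous element of $\FA^H_d$, and the expression $\sum_w w r_w$ (with $r_w \in \FA^H_d$) has a decisive structural feature: distinct words $w$ of length $e$ begin the monomials of $w r_w$ in a way that no monomial of $w' r_{w'}$ for $w' \ne w$ can cancel, because the length-$e$ prefix of any monomial appearing in $w r_w$ is exactly $w$. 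Concretely, expanding everything in the monomial basis of $\FA^H_{e+d}$, a basis monomial $u$ of length $e+d$ factors uniquely as $u = w u'$ with $|w|=e$, $|u'|=d$; so the coefficient of $u$ in $\sum_w w r_w$ equals the coefficient of $u'$ in $r_w$. Therefore $\sum_w w r_w = 0$ if and only if $r_w = 0$ for every $w$, i.e. $\sum_i c_{i,w} p_i = 0$ for every $w$. By linear independence of $p_1,\dots,p_k$, this gives $c_{i,w}=0$ for all $i$ and all $w$, hence $q_i = 0$.

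I do not anticipate a serious obstacle here; this is a structural fact about the free $\ast$-algebra, where monomials form a basis and concatenation of words is injective. The only point requiring a little care is the bookkeeping of the homogeneous reduction in the first paragraph, making sure the degrees add correctly and that $0$ is allowed to be homogeneous of every degree (as set up in Definition~\ref{def:FAHd}), and then the uniqueness of the prefix factorization $u = wu'$ of a word by its length-$e$ prefix, which is immediate. If a more conceptual phrasing is preferred, the same argument says that $\FA$ is a free left module over itself via right multiplication by any fixed basis of $\FA^H_d$ — but the monomial computation above is self-contained and suffices.
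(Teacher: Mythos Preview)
Your proof is correct and follows essentially the same approach as the paper's: both rewrite $\sum_i q_i p_i$ as $\sum_m m\, r_m$ with $r_m = \sum_i c_{i,m}\, p_i \in \FA^H_d$ and argue that distinct monomials $m$ give disjoint monomial supports, forcing each $r_m=0$ and hence all coefficients zero by linear independence of the $p_i$. The only cosmetic difference is that you first reduce to homogeneous $q_i$ of a common degree $e$ and then invoke the unique length-$e$ prefix, whereas the paper skips the reduction and handles monomials $m$ of varying degrees in one pass by splitting into the cases $\deg m_1 \neq \deg m_2$ (different total degree) and $\deg m_1 = \deg m_2$ (different initial word).
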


\begin{proof}Suppose
\[q_1 p_1 + \ldots + q_k p_k = 0 \]
for some polynomials $q_1, \ldots, q_k \in \FA$.
Let $\cM$ be a finite set of monomials such that there exist scalars
$A_{m,i}$, for $i=1, \ldots, k$, such that
\[q_i = \sum_{m \in \cM} A_{m,i} m.\]
For each $m \in \cM$,
\[
r_m = \sum_{i=1}^k A_{m,i} p_i
\]
is a homogeneous polynomial of degree $d$. Since
\[
\sum_{m \in \cM} m r_m =\sum_{i=1}^k q_i p_i= 0,
\]
it follows that $m r_m=0$ for all $m \in \cM$. (This is true because if $m_1 \ne m_2 \in \cM$ then
$m_1 r_{m_1}$ and $m_2 r_{m_2}$ have disjoint monomials. This in turn is true for the following reason: if $\deg m_1 \ne \deg m_2$
then they have monomials with different degrees; if $\deg m_1 = \deg m_2$
then they have monomials with different initial words.)
Since all $r_m$ are $0$ and the $p_i$ are linearly independent, all $A_{m,i}$ must be $0$.
\end{proof}

\begin{lemma}
\label{eq:LinIndOfHomNcPolys2}
Let $p_1, \ldots, p_k \in \FA$ be  degree $d$ polynomials with linearly independent leading polynomials $p_1',\ldots,p_k'$.
For every $q_1, \ldots, q_k \in \FA$ such that at least one $q_i$ is nonzero and for every $u \in \FA_{d-1}$,
the element
\[q=\sum_{i=1}^k q_i p_i+u\]
is nonzero, has degree $d+e$ where $e=\max\{\deg (q_i) \mid i=1,\ldots,k\}$
and its leading polynomial is $q'=\sum_{\deg (q_i)=e} q_i' p_i'$.
\end{lemma}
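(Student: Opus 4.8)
The plan is to reduce the statement to Lemma \ref{eq:LinIndOfHomNcPolys} by passing to leading (top-degree) parts. First I would record the elementary fact that for any polynomials $a,b\in\FA$ one has $\deg(ab)=\deg(a)+\deg(b)$ and, when $\deg(ab)=\deg(a)+\deg(b)$ is attained, the leading polynomial of $ab$ is the product $a'b'$ of the leading polynomials; more generally $\deg\bigl(\sum a_j\bigr)\le\max_j\deg(a_j)$ with equality and leading polynomial $\sum_{\deg a_j=\text{max}}a_j'$ provided that sum of leading polynomials is nonzero. These are immediate from the grading of $\FA$ by word length and the fact that multiplication of monomials is concatenation, so no cancellation can occur among the top-degree terms unless it occurs among their leading polynomials.

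Next I would set $e=\max\{\deg(q_i):i=1,\dots,k\}$ and split the sum $\sum_{i=1}^k q_ip_i$ according to whether $\deg(q_i)=e$. For the terms with $\deg(q_i)=e$, the product $q_ip_i$ has degree $e+d$ with leading polynomial $q_i'p_i'$. For the terms with $\deg(q_i)<e$, the product $q_ip_i$ has degree strictly less than $e+d$, and likewise $u\in\FA_{d-1}$ contributes degree at most $d-1<e+d$. Hence the only contributions to the homogeneous part of $q$ in degree $e+d$ come from $\sum_{\deg(q_i)=e}q_i'p_i'$. So it remains to show this homogeneous degree-$(e+d)$ element is nonzero; once that is established, $q$ has degree exactly $e+d$ and its leading polynomial is precisely $\sum_{\deg(q_i)=e}q_i'p_i'$, which is the claim.

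The key step, and the main obstacle, is therefore showing $\sum_{\deg(q_i)=e}q_i'p_i'\neq 0$. Here I would invoke Lemma \ref{eq:LinIndOfHomNcPolys}: the polynomials $p_1',\dots,p_k'$ are linearly independent homogeneous polynomials of degree $d$, so a relation $\sum_{i}r_i p_i'=0$ with $r_i\in\FA$ forces every $r_i=0$. Applying this with $r_i=q_i'$ for the indices with $\deg(q_i)=e$ (and $r_i=0$ otherwise) would give $q_i'=0$ for all such $i$, i.e.\ $q_i=0$ for all such $i$, contradicting the fact that $e$ is the maximum of the degrees and at least one $q_i$ is nonzero (so at least one index with $\deg(q_i)=e$ has $q_i\neq 0$, hence $q_i'\neq 0$). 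This contradiction shows the homogeneous part is nonzero, completing the argument. The only subtlety to be careful about is the bookkeeping of which terms land in degree $e+d$ versus lower degrees, and making sure the case "$u=0$ or $e=0$" is handled uniformly by the degree inequalities above.
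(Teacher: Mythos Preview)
Your proof is correct and follows essentially the same approach as the paper: set $e=\max_i\deg(q_i)$, isolate the degree-$(d+e)$ homogeneous component as $\sum_{\deg(q_i)=e}q_i'p_i'$, and invoke Lemma~\ref{eq:LinIndOfHomNcPolys} to show this component is nonzero. The paper writes out the explicit decomposition $q=\sum\hat{q'}_ip'_i+\sum(q_i-\hat{q'}_i)p_i+\sum\hat{q'}_i(p_i-p'_i)+u$ rather than appealing to the general fact $(ab)'=a'b'$, but the argument is the same.
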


\begin{proof}Suppose that at least one $q_i$ is nonzero.
Let $e = \max_i\{\deg(q_i)\}$. Let $\hat{q'}_i = q'_i$ if $\deg(q_i) = e$
and let $\hat{q'}_i = 0$ otherwise.
Then
\begin{equation}
\label{eq:degreenpluslower}
q = \sum_{i=1}^k \hat{q'}_ip'_i + \sum_{i=1}^k (q_i-\hat{q'}_i)p_i + \sum_{i=1}^k \hat{q'}_i(p_i-p'_i) + u.
\end{equation}
By linear independence of the $p'_i$ and by Lemma \ref{eq:LinIndOfHomNcPolys}, the homogeneous polynomial
$\displaystyle\sum_{i=1}^k \hat{q'}_ip'_i$
can only be zero if all of the $\hat{q'}_i$ equal $0$, which cannot be. Further, each of the
other terms of (\ref{eq:degreenpluslower}) must be of degree less than $d+e$.
Therefore, the leading polynomial of $q$ is
\[q' = \sum_{i=1}^k \hat{q'}_ip'_i.\]
\end{proof}


\begin{lemma}
\label{lem:Adl}Let $I \subseteq \FA$ be a left ideal generated by 
  polynomials of degree bounded by $d$.
\begin{enumerate}
\item \label{it:Adl1}
 There exist $p_1, \ldots, p_k \in I$ such that $\deg(p_i) = d$ for each $i$, the
leading polynomials $p'_1, \ldots, p'_k$ are linearly independent, and $I$ is equal to
\[I = \bigoplus_{i=1}^k \FA p_i \oplus I_{d-1}. \]

\item \label{it:Adl2}
  For each $D \geq d$, the space $I_{D}^{\ell}$ is equal to
\[I_D^{\ell} =  \sum_{i=1}^k \FA_{D-d}^H p'_i.\]
\end{enumerate}
\end{lemma}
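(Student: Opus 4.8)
The plan is to build the list $p_1,\dots,p_k$ by a greedy "leading polynomial" selection and then verify the two directness and span claims via the linear-independence lemmas already established. For part (\ref{it:Adl1}), start from a finite generating set $g_1,\dots,g_m$ of $I$, each of degree $\le d$; by multiplying on the left by monomials we may enlarge this to a generating set in which every generator of degree $<d$ still appears but we also have, for each such generator $g$, all left-translates $mg$ with $\deg(mg)=d$. Now consider the subspace $L \subseteq \FA^H_d$ spanned by the leading polynomials of all degree-$d$ elements of $I$; pick $p_1,\dots,p_k\in I$ of degree exactly $d$ whose leading polynomials $p_1',\dots,p_k'$ form a basis of $L$. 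I claim $I = \bigoplus_{i=1}^k \FA p_i \oplus I_{d-1}$. First, directness: if $\sum_i q_i p_i + u = 0$ with $u\in I_{d-1}$ and some $q_i\ne 0$, then Lemma \ref{eq:LinIndOfHomNcPolys2} forces this element to have degree $d + e \ge d > \deg u$ — wait, more carefully, applying that lemma to $\sum q_i p_i$ alone gives a nonzero leading term of degree $\ge d$, and since $\deg u \le d-1$ it cannot cancel, a contradiction; hence all $q_i=0$ and then $u=0$. Moreover $\FA p_i \cap I_{d-1} = (0)$ and $\FA p_i \cap \FA p_j = (0)$ for $i\ne j$ follow from the same lemma, so the direct sum is well-defined in the sense of Definition \ref{def:oplus}.

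The containment $\bigoplus_i \FA p_i \oplus I_{d-1} \subseteq I$ is clear since each $p_i\in I$ and $I_{d-1}\subseteq I$. For the reverse containment I would argue by induction on degree: let $q\in I$ with $\deg q = D$. If $D\le d-1$ then $q\in I_{d-1}$ and we are done. If $D = d$, its leading polynomial $q'$ lies in $L = \spn\{p_1',\dots,p_k'\}$, say $q' = \sum_i \alpha_i p_i'$; then $q - \sum_i \alpha_i p_i$ lies in $I$ and has degree $\le d-1$, hence lies in $I_{d-1}$, and we are done. If $D > d$, write $q = \sum_j m_j g_{i_j}$ as a combination of left-translates of the original generators; here the key point is that $I$ is generated in degree $\le d$, so each $m_j g_{i_j}$ has a leading polynomial that is a left-translate of a leading polynomial of a degree-$\le d$ element of $I$, and by repeatedly subtracting elements of $\sum_i \FA^H_{D-d} p_i'$ (using part (\ref{it:Adl2}), or rather its proof, to match leading terms) we reduce the degree. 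This is the only slightly delicate point, because the $g_j$ of degree $<d$ do not directly give leading polynomials in degree $d$; this is exactly why the enlargement of the generating set in the first paragraph is needed — after enlargement, every generator of degree exactly $d$ has its leading polynomial in $L$ by construction, and lower-degree generators are swallowed by $I_{d-1}$ once the degree has been knocked down far enough.

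For part (\ref{it:Adl2}), fix $D\ge d$. The inclusion $\sum_{i=1}^k \FA^H_{D-d} p_i' \subseteq I_D^\ell$ is immediate: for homogeneous $a$ of degree $D-d$, the element $a p_i$ lies in $I$, has degree $D$, and by Lemma \ref{eq:LinIndOfHomNcPolys2} its leading polynomial is $a p_i'$ (taking $u=0$ and a single nonzero $q_i = a$); summing, any element of $\sum_i \FA^H_{D-d} p_i'$ that is nonzero is the leading polynomial of the corresponding element of $I$, and $0$ is a leading polynomial by the convention in Definition \ref{def:FAHd}. Conversely, take a degree-$D$ element $q\in I$; by part (\ref{it:Adl1}) write $q = \sum_i q_i p_i + u$ with $u\in I_{d-1}$. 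Since $\deg u \le d-1 < D$, Lemma \ref{eq:LinIndOfHomNcPolys2} gives $\deg q = d + e$ where $e = \max_i \deg(q_i)$, so $e = D-d$, and the leading polynomial of $q$ is $q' = \sum_{\deg(q_i) = D-d} q_i' p_i' \in \sum_i \FA^H_{D-d} p_i'$. Taking spans over all such $q$ yields $I_D^\ell \subseteq \sum_i \FA^H_{D-d} p_i'$, completing the proof.

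The main obstacle is the reverse containment in part (\ref{it:Adl1}) — showing every element of $I$ of arbitrarily high degree actually lies in $\bigoplus_i \FA p_i \oplus I_{d-1}$. Everything else is a direct application of Lemmas \ref{eq:LinIndOfHomNcPolys} and \ref{eq:LinIndOfHomNcPolys2}, but this step genuinely uses that $I$ is generated in degree $\le d$, via the leading-term reduction; one must be careful that the reduction terminates and that it only ever subtracts elements of $\FA p_1 + \dots + \FA p_k$ plus, at the end, an element of $I_{d-1}$.
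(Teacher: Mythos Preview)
Your argument for directness in (\ref{it:Adl1}), for the degree-$d$ case of the reverse containment, and for all of (\ref{it:Adl2}) is correct and matches the paper. The gap is exactly where you flag it: the case $D>d$ of the reverse containment in (\ref{it:Adl1}). Your leading-term reduction needs the fact that the leading polynomial $q'$ of a degree-$D$ element $q\in I$ lies in $\sum_i \FA^H_{D-d}p_i'$, but that is precisely item (\ref{it:Adl2}), whose proof you (correctly) base on (\ref{it:Adl1}); invoking ``its proof'' does not break the circle. Your alternative route through a presentation $q=\sum_j m_j g_{i_j}$ also does not close the gap: if $\max_j \deg(m_j g_{i_j})>D$, the degree-$D$ homogeneous part of $q$ picks up contributions $m_j\cdot(\text{degree-}(D-\deg m_j)\text{ part of }g_{i_j})$ from lower homogeneous components of the generators, and those components need not lie in $\spn\{p_1',\dots,p_k'\}$. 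The enlargement trick lets you rewrite high-degree terms so that every $g_{i_j}$ occurring in a term of degree $\ge d$ has degree exactly $d$, but it does not prevent such cancellation from above, nor does it force the subleading pieces of $g_{i_j}$ into $L$.

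The paper avoids this entirely by a different induction. After establishing $I_d=\bigoplus_i Fp_i\oplus I_{d-1}$ (your $D=d$ argument), it observes $I=\FA I_d=\sum_i\FA p_i+\FA I_{d-1}$ and then proves $\FA I_{d-1}\subseteq \bigoplus_i\FA p_i\oplus I_{d-1}$ by induction on the \emph{length of the left monomial} rather than on the degree of the element: for a single letter $y$ one has $yI_{d-1}\subseteq I_d=\bigoplus_i Fp_i\oplus I_{d-1}$, and then for $m=m_1y$ one applies the inductive hypothesis to $m_1$ acting on the $I_{d-1}$ summand. This sidesteps any need to identify the leading polynomial of a general element of $I$ and never touches (\ref{it:Adl2}); it is the missing idea in your sketch.
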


\begin{proof}
First, $I$ being generated by polynomials of degree bounded by $d$ implies that $I = \FA I_d$.
  To prove item \eqref{it:Adl1}, let $p_1, \ldots, p_k \in I$ be a maximal set of degree $d$ polynomials in $I$ such that
the leading polynomials $p'_1, \ldots, p'_k$ are linearly independent. 

By Lemma \ref{eq:LinIndOfHomNcPolys},
for any $a_1, \ldots, a_k \in \FA$, not all equal to $0$,
we have $\sum_{i=1}^k a_i p_k' \neq 0$.
Therefore
$\sum_{i=1}^k a_i p_k \neq 0$, so 
\[ \sum_{i=1}^k \FA p_i =  \bigoplus_{i=1}^k \FA p_i. \]  
Further note that each $\sum_{i=1}^k a_i p_k \neq 0$
must have degree at least $d$ so that
\[ \bigoplus_{i=1}^k \FA p_i \cap I_{d-1} = (0).\]

If $q \in I$ is any other degree $d$ polynomial,
then by maximality its leading polynomial $q'$ cannot be linearly independent from the set $\{p'_1, \ldots, p'_k\}$.
Therefore there exist $\alpha_1, \ldots, \alpha_k \in F$ (i.e. scalars) such that
\[q' = \alpha_1 p'_1 + \ldots + \alpha_k p'_k. \]
This implies that the polynomial
\[q - \sum_{i=1}^k \alpha_i p_i \in I \]
is either $0$ or of degree less than $d$.
This implies that the set $I_d$ is equal to
\[I_d = \bigoplus_{i=1}^k F p_i \oplus I_{d-1}. \]

It now suffices to show that $\FA I_{d-1} \subseteq \bigoplus_{i=1}^k \FA p_i \oplus I_{d-1}$.


Proceed by induction on degree 
to show that for any monomial $m$
one has $m I_{d-1} \subseteq \bigoplus_{i=1}^k \FA p_i \oplus I_{d-1}$.
If $\deg(m) =0$, then the result is trivial.  Next, suppose the result holds for $\deg(m) \leq n$.
Let $m = m_1m_2$, where $\deg(m_2) = 1$.  By the above discussion, $m_2 I_{d-1} \subseteq I_d = \bigoplus_{i=1}^k \FA p_i \oplus I_{d-1}$.
By induction, since $\deg(m_1) < \deg(m)$, $m_1 m_2 I_{d-1}
\subseteq \bigoplus_{i=1}^k m_1 \FA p_i \oplus m_1 I_{d-1} \subseteq \bigoplus_{i=1}^k \FA p_i \oplus I_{d-1}$.

 For item \eqref{it:Adl2}, let $q \in I$ be a degree $D$ polynomial.  By the first part,
\[q = \sum_{i=1}^k q_ip_i + u, \]
where $q_1,\ldots,q_k \in \FA$ and $u \in I_{d-1}$. Since $D \ge d$, at least one $q_i$ is nonzero.
Therefore, by Lemma \ref{eq:LinIndOfHomNcPolys2},
$q' = \sum_{\deg (q_i)=e} q_i' p_i'\in \sum_{i=1}^k \FA_{D-d}^H p'_i$
with $e=\max_i \{\deg(q_i)\}$. The converse is clear.
\end{proof}




Item \eqref{it:Adl2} of Lemma \ref{lem:Adl} says that
for every left ideal $I$ of $\FA$ generated by elements of degree 
 at most $d$ and every $D \ge d$ we have
\begin{equation}
\label{idlformula}
I_D^\ell=\FA_{D-d} I_d^\ell.
\end{equation}

\begin{lemma}
\label{lem:decomp}Let $I \subseteq \FA$ be a left ideal
generated by polynomials of degree at most $d$.
Consider any decomposition of $\FA^H_d$ of the form
\[\FA^H_d = I^{\ell}_d \oplus G,\]
where $G \subset \FA_d^H$.
Then
\[I \cap \FA  G = \FA I^{\ell}_d \cap \FA G = \{0\}.\]
\end{lemma}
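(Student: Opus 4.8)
The plan is to reduce the claim to the statement about leading polynomials. First I would observe that the inclusion $\FA I^\ell_d \supseteq I \cap \FA G$ is not needed; rather the chain of equalities should be read as: (a) $I \cap \FA G \subseteq \FA I^\ell_d \cap \FA G$, and (b) $\FA I^\ell_d \cap \FA G = \{0\}$. For (b), note that $\FA I^\ell_d = \bigcup_{D \geq d} \FA_{D-d} I^\ell_d$ and likewise $\FA G = \bigcup_{D\ge d}\FA_{D-d}G$, and both are spanned by homogeneous elements once we decompose by degree; since $\FA^H_d = I^\ell_d \oplus G$, Example \ref{ABdef} (which invokes Lemma \ref{eq:LinIndOfHomNcPolys}) gives $\FA^H_k I^\ell_d \cap \FA^H_k G = \{0\}$ for every $k$, hence $\FA I^\ell_d \cap \FA G = \{0\}$ after summing over degrees.

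The substantive point is (a): if $q \in I \cap \FA G$ and $q \neq 0$, I want to derive a contradiction. Write $q = \sum_j a_j g_j$ with $g_j \in G$ and, decomposing each $a_j$ into homogeneous parts, we may assume $q$ is a sum of terms $a g$ with $a \in \FA^H_{D-d}$, $g \in G$ of top degree $D$; by Lemma \ref{eq:LinIndOfHomNcPolys} applied to a linearly independent subset of $G$, the degree-$D$ homogeneous part of $q$ is a nonzero element of $\FA^H_{D-d} G$. On the other hand, $q \in I$ and $\deg q = D \geq d$, so by Lemma \ref{lem:Adl}\eqref{it:Adl2} (equivalently formula \eqref{idlformula}), the leading polynomial $q'$ of $q$ lies in $I^\ell_D = \FA^H_{D-d} I^\ell_d$. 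But $q'$ is precisely that degree-$D$ homogeneous part, so $q' \in \FA^H_{D-d} G \cap \FA^H_{D-d} I^\ell_d = \{0\}$ by part (b), contradicting $q' \neq 0$. Hence $I \cap \FA G = \{0\}$, and since $\{0\} \subseteq \FA I^\ell_d \cap \FA G \subseteq$ the previously computed $\{0\}$, all three sets coincide.

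The main obstacle is bookkeeping the passage from a general element of $\FA G$ (a finite sum $\sum a_j g_j$ with arbitrary $a_j \in \FA$) to a controlled homogeneous representative so that Lemma \ref{eq:LinIndOfHomNcPolys} and Lemma \ref{eq:LinIndOfHomNcPolys2} apply cleanly: one must take the top-degree homogeneous component of the whole sum, check it is nonzero, and identify it with the leading polynomial $q'$. Care is also needed because $G$ need not be finite-dimensional a priori, so I would first reduce to the finite set of $G$-elements actually appearing in the expression for $q$ and pass to a linearly independent subset of their span inside $\FA^H_d$ before quoting Lemma \ref{eq:LinIndOfHomNcPolys}. Everything else is a routine degree argument.
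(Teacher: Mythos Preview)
Your proof is correct and follows essentially the same approach as the paper: both arguments pin down the leading polynomial of a putative nonzero element simultaneously in $\FA^H_{D-d} I^\ell_d$ and in $\FA^H_{D-d} G$, which are in direct sum by Example~\ref{ABdef}. The only cosmetic difference is that the paper invokes Lemma~\ref{lem:Adl}\eqref{it:Adl1} and applies Lemma~\ref{eq:LinIndOfHomNcPolys2} once to the combined independent set $\{p_1',\dots,p_k',v_1,\dots,v_l\}$ to force all coefficients to vanish, whereas you quote Lemma~\ref{lem:Adl}\eqref{it:Adl2} (formula~\eqref{idlformula}) for the $I$-side and handle the $\FA G$-side by a direct homogeneous-component computation.
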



\begin{proof}
Suppose $p \in I \cap \FA  G$.
By assertion (1) of Lemma \ref{lem:Adl}, there exist
$p_1, \ldots, p_k \in I$ of degree $d$  
such that the set of leading terms, $p'_1, \ldots, p'_k$,
is independent and there exist 
$q_1,\ldots,q_k \in \FA$, $u \in I_{d-1}$
such that $p= \sum_{i=1}^k q_i p_i + u$.
There also exist a linearly independent set
$v_1,\ldots,v_l \in G$ and polynomials  $s_1,\ldots,s_l \in \FA$
such that $p=\sum_{j=1}^k s_j v_j$. 
Because $I_d^\ell\cap G=(0)$,
the set  $p_1',\ldots,p_k',v_1,\ldots,v_l$
is linearly independent. Further,
$0=p-p=\sum_{i=1}^k q_i p_i+\sum_{j=1} (-s_j) v_j+u.$
By Lemma \ref{eq:LinIndOfHomNcPolys2}
it follows that each $q_i$ and each $s_j$ is $0$.
Hence $p=\sum s_jv_j=0.$

The second equality follows from Example \ref{ABdef}.
\end{proof}

\begin{lemma}
\label{lem:IplusIstarLeading}
Let $I \subseteq \FA$ be a left ideal
generated by polynomials
$p_1, \ldots, p_k \in \FA$ with $\deg p_i \leq d$ for all $i$.
Suppose  $G$ is a subspace of $\FA_d^H$ such that
\[\FA_d^H = I_d^{\ell} \oplus G. \]
If $D \geq d,$ then  the space $\left(I + I^*\right)_{2D}^{\ell}$ is equal to
\begin{equation}
\label{eq:IplusIstarLeading}
(I + I^*)_{2D}^{\ell} = \left[(I_d^{\ell})^* \FA_{2(D-d)}^H I_d^{\ell}\right]
\oplus
\left[G^* \FA_{2(D-d)}^H I_d^{\ell}\right]
\oplus
\left[(I_d^{\ell})^* \FA_{2(D-d)}^H G\right].
 \end{equation}

Consequently, the space
\[
W:= 
G^*
 \FA_{2(D-d)}^H G
\]
satisfies
\[
\FA_{2D}^H = (I + I^*)_{2D}^{\ell} \oplus W.
\]
\end{lemma}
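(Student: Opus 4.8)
The plan is to reduce the lemma to two cleaner facts: a combinatorial direct-sum decomposition of the homogeneous pieces of $\FA$, and a ``degree reduction'' statement for $I+I^{*}$. Throughout write $e:=2(D-d)$, so $2D-d=d+e$, and recall that $I_j^{\ell}=\FA_{j-d}^H I_d^{\ell}$ for $j\geq d$ (Lemma~\ref{lem:Adl}\eqref{it:Adl2}, equation~\eqref{idlformula}), that $(\FA_j^H)^{*}=\FA_j^H$, and hence that $(I_j^{\ell})^{*}=(I_d^{\ell})^{*}\FA_{j-d}^H$. Here $(q)_j^H$ denotes the degree-$j$ homogeneous component of $q$ (so it is the leading polynomial of $q$ when $\deg q=j$, and $0$ when $\deg q<j$).

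The first step is the four-fold splitting
\begin{equation*}
\FA_{j}^H=\bigl[(I_d^{\ell})^{*}\FA_{j-2d}^H I_d^{\ell}\bigr]\oplus\bigl[(I_d^{\ell})^{*}\FA_{j-2d}^H G\bigr]\oplus\bigl[G^{*}\FA_{j-2d}^H I_d^{\ell}\bigr]\oplus\bigl[G^{*}\FA_{j-2d}^H G\bigr],\tag{$\dagger_j$}
\end{equation*}
valid for every $j\geq 2d$. To prove $(\dagger_j)$ I will use a mild extension of Lemma~\ref{eq:LinIndOfHomNcPolys}: if $f_1,\dots,f_a$ are linearly independent homogeneous polynomials of one fixed degree and $h_1,\dots,h_b$ are linearly independent homogeneous polynomials of another fixed degree, then the products $f_ih_j$ are linearly independent; this follows from Lemma~\ref{eq:LinIndOfHomNcPolys} by applying $*$ to a dependence $\sum_{i,j}c_{ij}f_ih_j=0$ to move the $f_i$ to the right. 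Applying $*$ to $\FA_d^H=I_d^{\ell}\oplus G$ gives $\FA_d^H=(I_d^{\ell})^{*}\oplus G^{*}$; now, with a basis of $\FA_d^H$ adapted to $I_d^{\ell}\oplus G$ and the monomial basis of $\FA_{j-2d}^H$, two applications of the extension show that the triple products $g^{*}\mu g'$, with $g,g'$ ranging over the chosen basis of $\FA_d^H$ and $\mu$ over the monomial basis of $\FA_{j-2d}^H$, are linearly independent; since they span $\FA_d^H\FA_{j-2d}^H\FA_d^H=\FA_{j}^H$ (Example~\ref{ABdef}) they form a basis, and sorting them according to the types of the two outer factors gives $(\dagger_j)$. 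Taking $j=2D$: the last summand of $(\dagger_{2D})$ is $W$, the sum of the first three is exactly the right-hand side of \eqref{eq:IplusIstarLeading} (call it $R$), so $(\dagger_{2D})$ already yields $\FA_{2D}^H=R\oplus W$. Distributing $\FA_d^H=(I_d^{\ell})^{*}\oplus G^{*}$ inside $I_{2D}^{\ell}=\FA_d^H\FA_e^HI_d^{\ell}$ and inside $(I_{2D}^{\ell})^{*}=(I_d^{\ell})^{*}\FA_e^H\FA_d^H$ (using Example~\ref{ABdef}) identifies $R=I_{2D}^{\ell}+(I_{2D}^{\ell})^{*}$. So the lemma reduces to proving $(I+I^{*})_{2D}^{\ell}=I_{2D}^{\ell}+(I_{2D}^{\ell})^{*}$.

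The inclusion ``$\supseteq$'' is immediate: every degree-$2D$ element of $I$ lies in $I+I^{*}$ with degree $2D$, so $I_{2D}^{\ell}\subseteq(I+I^{*})_{2D}^{\ell}$; and $(I+I^{*})_{2D}^{\ell}$ is $*$-stable, since $I+I^{*}$ is and $*$ preserves degrees and carries leading polynomials to leading polynomials, so $(I_{2D}^{\ell})^{*}\subseteq(I+I^{*})_{2D}^{\ell}$ as well. For ``$\subseteq$'' I will prove the degree-reduction statement: \emph{every $p\in I+I^{*}$ with $\deg p\leq 2D$ admits a representation $p=a+b^{*}$ with $a,b\in I$ of degree $\leq 2D$}. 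Granting this, if $\deg p=2D$ then $(p)_{2D}^H=(a)_{2D}^H+\bigl((b)_{2D}^H\bigr)^{*}$ with $(a)_{2D}^H,(b)_{2D}^H\in I_{2D}^{\ell}$, whence $(p)_{2D}^H\in R$.

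The reduction is proved by a minimality argument. Pick $p=a+b^{*}$ with $m:=\max(\deg a,\deg b)$ as small as possible, and suppose $m>2D$. Since $\deg p<m$, the leading terms of $a$ and $b^{*}$ must cancel, forcing $\deg a=\deg b=m$ and $\alpha:=(a)_m^H=-\bigl((b)_m^H\bigr)^{*}$; note $\alpha\in I_m^{\ell}$ and $\alpha^{*}\in I_m^{\ell}$ (leading polynomials of $a$ and $b$), i.e.\ $\alpha\in I_m^{\ell}\cap(I_m^{\ell})^{*}$. Applying $(\dagger_m)$ (legitimate since $m>2D\geq 2d$) together with the same distributivity identifies $I_m^{\ell}$ as the sum of the first and third summands of $(\dagger_m)$ and $(I_m^{\ell})^{*}$ as the sum of the first and second, so $I_m^{\ell}\cap(I_m^{\ell})^{*}=(I_d^{\ell})^{*}\FA_{m-2d}^H I_d^{\ell}$. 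Write $\alpha=\sum_t(q_t')^{*}w_tr_t'$ with $q_t',r_t'\in I_d^{\ell}$ (nonzero) and $w_t\in\FA_{m-2d}^H$, pick $\tilde q_t,\tilde r_t\in I$ of degree $d$ with leading polynomials $q_t',r_t'$, and set $c:=\sum_t(\tilde q_t)^{*}w_t\tilde r_t$. Since $\FA I_d=I$ and hence $I_d^{*}\FA=I^{*}$, the element $c$ lies in $I_d^{*}\FA I_d\subseteq I\cap I^{*}$, and its degree-$m$ component equals $\alpha$, so $\deg c=m$ and $(c)_m^H=\alpha$. Then $a-c\in I$ and $b+c^{*}\in I$ both have degree $<m$ (their degree-$m$ components are $\alpha-\alpha=0$ and $(b)_m^H+\alpha^{*}=0$), and $p=(a-c)+(b+c^{*})^{*}$, contradicting minimality of $m$; hence $m\leq 2D$. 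The only genuinely delicate point is this last passage: the hypothesis $\deg p=2D$ only cancels the leading terms of $a$ and $b^{*}$, and the work is to upgrade that cancellation to an \emph{element of $I\cap I^{*}$} carrying the offending leading polynomial $\alpha$ — which is exactly what the identity $I_m^{\ell}\cap(I_m^{\ell})^{*}=(I_d^{\ell})^{*}\FA_{m-2d}^H I_d^{\ell}$ (coming from $(\dagger_m)$) provides; everything else is routine bookkeeping with Lemmas~\ref{eq:LinIndOfHomNcPolys}, \ref{lem:Adl} and Example~\ref{ABdef}.
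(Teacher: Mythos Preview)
Your proof is correct and follows essentially the same route as the paper's: both rest on the four-fold splitting $(\dagger_j)$ (the paper's display~\eqref{eq:decompos}), the resulting identification $I_m^{\ell}\cap(I_m^{\ell})^{*}=(I_d^{\ell})^{*}\FA_{m-2d}^H I_d^{\ell}$, and the degree-reduction step that subtracts an element of $I\cap I^{*}$ carrying the offending leading polynomial. Your organization is a bit cleaner---you first isolate $(\dagger_j)$ and reduce to $(I+I^{*})_{2D}^{\ell}=I_{2D}^{\ell}+(I_{2D}^{\ell})^{*}$, then run a minimality argument---whereas the paper interleaves these in a direct case analysis, but the ideas are the same.
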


\begin{proof}Each element of $I + I^*$ is of the form
$p + q^*$, where $p, q \in I$.
The leading polynomial of $p$ is in $I_{\deg(p)}^{\ell}$ and the leading polynomial of $q^\ast$ is in
$(I_{\deg(q)}^{\ell})^*$.  We consider two cases.

First, suppose $2D = \deg(p + q^*) < \max\{\deg(p), \deg(q)\}.$
This can only happen when the leading polynomials of $p$ and $q^*$ cancel each other out, that is,
if the leading polynomials of $p$ and $-q^*$ are the same.
Let $\deg(p) = \deg(q) = D'$.
Decompose the space $\FA_{D'}^H$
as
\begin{align}
\label{eq:decompos}
\FA_{D'}^H
& = \FA_{D'-d}^H I_d^{\ell} \oplus \FA_{D'-d}^H G\\
\notag & = \left[(I_d^{\ell} \oplus G)^*\FA_{D'-2d}^H I_d^{\ell}\right] \oplus \left[(I_d^{\ell} \oplus G)^*\FA_{D'-2d}^H G\right]\\
\notag & = \left[(I_d^{\ell})^* \FA_{D'-2d}^H I_d^{\ell}\right] \oplus \left[H^*\FA_{D'-2d}^H I_d^{\ell}\right] \\
\notag &\oplus
\left[(I_d^{\ell})^* \FA_{D'-2d}^H G \right] \oplus \left[H^*\FA_{D'-2d}^H G\right].
\end{align}
Using equations \eqref{idlformula} and \eqref{eq:decompos} respectively, decompose $I_{D'}^{\ell}$ as
\[ I_{D'}^{\ell} = \FA_{D' - d}^H I_d^{\ell} = (I_d^{\ell})^* \FA_{D' - 2d}^H I_d^{\ell} \oplus G^*\FA_{D' - 2d}^H I_d^{\ell},\]
and decompose $I_{D'}^{\ell}$ as
\[ (I^*)_{D'}^{\ell} = (I_{D'}^{\ell})^* = (I_d^{\ell})^* \FA_{D' - 2d}^H I_d^{\ell} \oplus (I_d^{\ell})^*\FA_{D' - 2d}^H G.\]
The leading polynomial of $p$ and $-q^*$ must therefore be in the space
\begin{equation*}
 I_{D'}^{\ell} \cap (I^*)_{D'}^{\ell} =
  (I_d^{\ell})^* \FA_{D' - 2d}^H I_d^{\ell}.
\end{equation*}
Let the leading polynomial of $p$ and $-q^*$ be equal to
\begin{equation}
\label{eq:pprimeisabc}
 p' = -(q')^* = \sum_{i=1}^n (a_i')^* b_i c_i' \in (I_d^{\ell})^* \FA_{D' - 2d}^H I_d^{\ell}
\end{equation}
where each $a_i'$ is the leading polynomial of some $a_i \in I_d$, each $c_i'$ is the leading polynomial of some $c_i \in I_d$, and $b_i \in \FA_{D' - 2d}^H$.
Then
\[p + q^* = \left(p -  \sum_{i=1}^n (a_i)^* b_i c_i \right)
+ \left(q + \sum_{i=1}^n (c_i)^* (b_i)^* a_i  \right)^*,\]
which is a sum of something from $I$ and something from $I^*$, each of degree less than $D'$.  Proceed inductively to reduce $p + q^*$ to a sum of polynomials of degree bounded by $2D$.

Now consider the case where $\deg(p), \deg(q) \leq 2D$.
By hypothesis, $\deg(p + q) = 2D$, so at least one of $p$ or $q$ must be degree $2D$.
If $\deg(p)< 2D$, then $\deg(q) = 2D$ and the leading polynomial of $p + q^*$ is the leading polynomial of $q^*$,
which, by Lemma \ref{lem:Adl}, is an element of
$$(I_d^{\ell})^* \FA_{2(D-d)} I_d^{\ell} \oplus (I_d^{\ell})^* \FA_{2(D-d)} G.$$
If $\deg(q) < 2D$, then $\deg(p) = 2D$ and the leading polynomial of $p + q^*$ is the leading polynomial of $p$,
which, by Lemma \ref{lem:Adl}, is an element of $$(I_d^{\ell})^* \FA_{2(D-d)} I_d^{\ell} \oplus G^* \FA_{2(D-d)} I_d^{\ell}.$$
If $\deg(p) = \deg(q) = 2D$, then the leading polynomial of $p + q^*$ must be the sum of the leading polynomials of $p$ and $q^*$
(which, by assumption, must be nonzero). This is in the space
\[\left[(I_d^{\ell})^* \FA_{2(D-d)} I_d^{\ell} \oplus (I_d^{\ell})^* \FA_{2(D-d)} G\right] +
\left[ (I_d^{\ell})^* \FA_{2(D-d)} I_d^{\ell} \oplus G^* \FA_{2(D-d)} I_d^{\ell}\right]\]
\[ = (I_d^{\ell})^* \FA_{2(D-d)} I_d^{\ell} \oplus
H^* \FA_{2(D-d)} I_d^{\ell}
\oplus
(I_d^{\ell})^* \FA_{2(D-d)} G.\]
In all cases, the leading polynomial of an element of $I + I^*$ is in the space
(\ref{eq:IplusIstarLeading}).
\end{proof}

\begin{prop}
\label{cor:computeIplusIstar}
Let $I \subseteq \FA$ be a left ideal
generated by polynomials 
with degree bounded by $d$.
\begin{enumerate}
 \item The space $(I + I^*)_{2d-1}$ is equal to
\[ (I + I^*)_{2d-1} = I_{2d-1} + I^*_{2d-1}.\]
\item  Choose, by Lemma \ref{lem:Adl}
polynomials $p_1, \ldots, p_k$ so that
\[I = \sum_{i=1}^k \FA p_i + I_{d-1}. \]
If  $\{q_1, \ldots, q_{\ell}\}$ is a basis for
$I_{d-1},$  then the set
\begin{equation}
\label{eq:spanOfIplusIstarHerm}
 \{ mp_i + p_i^*m^*:\ m\ \text{monomial},\ \deg(mp_i) <2d\} \cup \{q_1 + q_1^*, \ldots, q_{\ell} + q_{\ell}^* \}
\end{equation}
spans $(I + I^*)_{2d-1} \cap \FA_h$.
\end{enumerate}

\end{prop}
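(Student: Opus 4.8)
The plan is to prove the two assertions in order, leaning on Lemma \ref{lem:Adl} and on the leading-term bookkeeping already carried out for Lemma \ref{lem:IplusIstarLeading}. For (1) the inclusion $I_{2d-1}+I^*_{2d-1}\subseteq(I+I^*)_{2d-1}$ is immediate, since each summand consists of elements of degree at most $2d-1$. For the reverse inclusion I would take $w\in(I+I^*)_{2d-1}$, write $w=p+q^*$ with $p,q\in I$, and choose such a representation with $D':=\max\{\deg p,\deg q\}$ as small as possible. If $D'\le 2d-1$ we are done, so suppose $D'\ge 2d$. Since $\deg w\le 2d-1<D'$, necessarily $\deg p=\deg q=D'$ and the degree-$D'$ parts of $p$ and $q^*$ cancel, so the common leading polynomial $p'=-(q')^*$ lies in $I_{D'}^{\ell}\cap(I^*)_{D'}^{\ell}$. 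The decomposition computed inside the proof of Lemma \ref{lem:IplusIstarLeading} identifies this intersection with $(I_d^{\ell})^*\FA_{D'-2d}^H I_d^{\ell}$ (this uses $D'\ge 2d$, which is precisely why the bound in the statement is $2d-1$), so exactly as in \eqref{eq:pprimeisabc} I can write $p'=\sum_i(a_i')^* b_i c_i'$ with $a_i',c_i'$ the leading polynomials of $a_i,c_i\in I_d$ and $b_i\in\FA_{D'-2d}^H$. Then $w=\tilde p+\tilde q^*$ with $\tilde p=p-\sum_i a_i^* b_i c_i$ and $\tilde q=q+\sum_i c_i^* b_i^* a_i$, both in $I$ (as $I$ is a left ideal and $a_i,c_i\in I$) and both of degree $<D'$, contradicting minimality. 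Hence $D'\le 2d-1$ and $w\in I_{2d-1}+I^*_{2d-1}$.

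For (2), I would first pin down $I_{2d-1}$. By assertion (1) of Lemma \ref{lem:Adl}, $I=\bigoplus_{i=1}^k\FA p_i\oplus I_{d-1}$ with $\deg p_i=d$ and $p_1',\dots,p_k'$ linearly independent, so every element of $I$ is uniquely $\sum_i a_ip_i+u$ with $a_i\in\FA$ and $u\in I_{d-1}$. By Lemma \ref{eq:LinIndOfHomNcPolys2} such an element has degree $d+\max_i\deg a_i$ whenever some $a_i\ne 0$, hence has degree at most $2d-1$ exactly when every $a_i\in\FA_{d-1}$; thus
\[
I_{2d-1}=\Big(\bigoplus_{i=1}^k\FA_{d-1}p_i\Big)\oplus I_{d-1}.
\]
Since $\FA_{d-1}=\spn\{m:\ m\ \text{a monomial},\ \deg m\le d-1\}$ and $\deg(mp_i)=\deg m+d$, the space $\FA_{d-1}p_i$ is spanned by $\{mp_i:\ \deg(mp_i)<2d\}$, so $\{mp_i:\ \deg(mp_i)<2d\}\cup\{q_1,\dots,q_\ell\}$ spans $I_{2d-1}$.

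Finally I would combine the two: a Hermitian element $w\in(I+I^*)_{2d-1}$ can, by (1), be written $w=p+q^*$ with $p,q\in I_{2d-1}$, whence $w=\tfrac12(w+w^*)=r+r^*$ with $r=\tfrac12(p+q)\in I_{2d-1}$; conversely $r+r^*$ is always Hermitian and lies in $I_{2d-1}+I^*_{2d-1}=(I+I^*)_{2d-1}$. Writing $r=\sum_i a_ip_i+u$ as above and expanding each $a_i$ over monomials and $u$ over the basis $q_1,\dots,q_\ell$ turns $r+r^*$ into a linear combination of the elements $mp_i+p_i^*m^*$ (with $\deg(mp_i)<2d$) and $q_j+q_j^*$, which is exactly the spanning claim \eqref{eq:spanOfIplusIstarHerm}; the reverse containment is clear since each listed element is Hermitian, of degree $<2d$, and visibly in $I+I^*$. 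The one genuinely non-formal step is the reduction in part (1) — replacing a low-degree element of $I+I^*$ by a representation $p+q^*$ with $p,q$ themselves of low degree — and this is essentially the ``first case'' argument in the proof of Lemma \ref{lem:IplusIstarLeading}; everything else is routine assembly of Lemma \ref{lem:Adl} and the degree formula of Lemma \ref{eq:LinIndOfHomNcPolys2}. (I would carry the spanning argument out over $\RR$, where the monomial coefficients are real; for $F=\CC$ the generating set in \eqref{eq:spanOfIplusIstarHerm} should be enlarged, e.g.\ by the twists $i\,mp_i-i\,p_i^*m^*$, but the argument is otherwise identical.)
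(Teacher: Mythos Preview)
Your proof is correct and follows essentially the same route as the paper: the degree-reduction in part (1) via the identification $I_{D'}^{\ell}\cap(I^*)_{D'}^{\ell}=(I_d^{\ell})^*\FA_{D'-2d}^H I_d^{\ell}$ is exactly the paper's argument (you phrase it as a minimal-$D'$ contradiction rather than an induction, which is immaterial), and for part (2) both you and the paper span $I_{2d-1}$ using Lemma \ref{lem:Adl} and then symmetrize via $w=\tfrac12(w+w^*)$. Your parenthetical remark that over $F=\CC$ the spanning set \eqref{eq:spanOfIplusIstarHerm} needs the additional twists $i\,mp_i-i\,p_i^*m^*$ (and likewise $i\,q_j-i\,q_j^*$) is a correct observation that the paper's own write-up glosses over.
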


\begin{proof}
Let $p, q \in I$
with $\deg(p) \geq 2d$ and
$\deg(p + q^*) < 2d$.
This can only happen if 
$\deg(p) = \deg(q)$
and the leading polynomials
$p'$ and $q'$ of $p$ and $q$
respectively satisfy $(p')^* = -q'$.
As in (\ref{eq:pprimeisabc},
we see that 
\[ p' = -(q')^* = \sum_{i=1}^n (a_i')^*b_ic_i',\]
where $a_i', c_i'$ are the leading polynomials
of some $a_i, c_i \in I$.
Therefore
\[ p + q^* = \left(p - \sum_{i=1}^n (a_i)^*b_ic_i\right) + \left(q + \sum_{i=1}^n (c_i)^*b_i^*a_i\right)^*, \]
which is a sum of an element of $I$ of degree less than $\deg(p)$
and an element of $I^*$ of degree less than $\deg(p)$.
We proceed inductively to show that $p + q^* \in I_{2d-1} + I_{2d-1}^*$.

Further, by Lemma \ref{lem:Adl} $I_{2d-1}$ is spanned by polynomials
of the form $mp_i$, with $m$ a monomial and $\deg(mp_i) < 2d$, together with the $q_j$.
A symmetric polynomial $p \in I_{2d-1} + I_{2d-1}^*$ is therefore equal to
\begin{equation}
\label{eq:psymmetric}
  p = \sum_{\deg(mp_i) < 2d} A_{mp_i} mp_i + \sum_{\deg(np_j) < 2d} B_{np_j} p_j^*n^* + \sum_{n=1}^{\ell} C_n q_n + \sum_{r=1}^{\ell} D_n q_n^*
\end{equation}
for some sufficiently defined $A_{mp_i}, B_{np_j}, C_n \in F$.
But $p$ being symmetric means $p = \frac{1}{2} (p + p^*)$, so
\[ p = \frac{1}{2} \left [\sum A_{mp_i} (mp_i + p_i^*m^*) + 
\sum B_{np_j} (np_j + p_j^*n^*) + \sum (C_n + D_n)(q_n + q_n^*)\right]. \]
Therefore (\ref{eq:spanOfIplusIstarHerm}) is a spanning set for
$(I + I^*)_{2d-1} \cap \FA_h$.
\end{proof}

\begin{lemma}
\label{lem:basis4W2DH}
 Let $G$ and $W$ be as in
Lemma \ref{lem:IplusIstarLeading}.
Let $q_1, \ldots, q_k$ be a basis
for $\FA_{D-d}^H G$.  Then
the set of products $q_i^*q_j$, where $1 \leq i,j \leq k$,
is  a basis
for $W$.
\end{lemma}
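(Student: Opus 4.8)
The plan is to identify $W$ with $\bigl(\FA_{D-d}^H G\bigr)^\ast\bigl(\FA_{D-d}^H G\bigr)$ and then to reduce both the spanning and the linear independence assertions to Lemma \ref{eq:LinIndOfHomNcPolys}, applied as in the earlier proofs of this section.

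First I would establish the identity
\[
W = G^\ast\, \FA_{2(D-d)}^H\, G = \bigl(\FA_{D-d}^H G\bigr)^\ast\,\bigl(\FA_{D-d}^H G\bigr).
\]
For the inclusion ``$\subseteq$'' one uses $\FA_{2(D-d)}^H = \FA_{D-d}^H \FA_{D-d}^H$ from Example \ref{ABdef}: a generator $g_1^\ast h g_2$ with $g_1,g_2 \in G$ and $h \in \FA_{2(D-d)}^H$ can, after writing $h = \sum_r h_1^{(r)} h_2^{(r)}$ with $h_i^{(r)} \in \FA_{D-d}^H$, be rewritten as $\sum_r (h_1^{(r)\ast} g_1)^\ast (h_2^{(r)} g_2)$; here $h_1^{(r)\ast} g_1$ and $h_2^{(r)} g_2$ lie in $\FA_{D-d}^H G$ because the involution preserves degree, so $\FA_{D-d}^H$ is involution-closed. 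The reverse inclusion is immediate, since for $a,b \in \FA_{D-d}^H$ one has $a^\ast b \in \FA_{D-d}^H \FA_{D-d}^H = \FA_{2(D-d)}^H$. I would also record that $\FA_{D-d}^H G \subseteq \FA_{D-d}^H \FA_d^H = \FA_D^H$, so $q_1,\ldots,q_k$ are linearly independent \emph{homogeneous} polynomials of degree $D$, and hence so are $q_1^\ast,\ldots,q_k^\ast$.

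Next, spanning: given that $q_1,\ldots,q_k$ is a basis of $\FA_{D-d}^H G$, every element of $\bigl(\FA_{D-d}^H G\bigr)^\ast\bigl(\FA_{D-d}^H G\bigr)$ is a finite sum of terms $a^\ast b$ with $a = \sum_i \alpha_i q_i$ and $b = \sum_j \beta_j q_j$; expanding gives $a^\ast b = \sum_{i,j} \overline{\alpha_i}\,\beta_j\, q_i^\ast q_j$, so $\{q_i^\ast q_j\}_{1\le i,j\le k}$ spans $W$. For linear independence, suppose $\sum_{i,j} c_{ij}\, q_i^\ast q_j = 0$ and group by $j$ to get $\sum_j \bigl(\sum_i c_{ij}\, q_i^\ast\bigr) q_j = 0$. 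Applying Lemma \ref{eq:LinIndOfHomNcPolys} with the linearly independent homogeneous degree-$D$ polynomials $q_1,\ldots,q_k$ playing the role of $p_1,\ldots,p_k$ and $\sum_i c_{ij}\, q_i^\ast$ the coefficients forces $\sum_i c_{ij}\, q_i^\ast = 0$ for each $j$, and then the linear independence of $q_1^\ast,\ldots,q_k^\ast$ forces $c_{ij} = 0$ for all $i,j$.

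I do not expect a genuine obstacle here; the only point requiring care is the bookkeeping in the identity $W = \bigl(\FA_{D-d}^H G\bigr)^\ast\bigl(\FA_{D-d}^H G\bigr)$, namely that homogeneous factors may be moved freely across the involution. Once that is in place, spanning is a one-line expansion in the given basis and independence is exactly one application of Lemma \ref{eq:LinIndOfHomNcPolys} together with the bijectivity of the involution on $\FA$.
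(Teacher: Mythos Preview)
Your proposal is correct and follows essentially the same route as the paper: identify $W$ with $\bigl(\FA_{D-d}^H G\bigr)^\ast\bigl(\FA_{D-d}^H G\bigr)$, expand in the basis to get spanning, and invoke Lemma~\ref{eq:LinIndOfHomNcPolys} for linear independence. The paper's proof is terser---it asserts the identification directly ``using Lemma~\ref{lem:IplusIstarLeading}'' and cites Lemma~\ref{eq:LinIndOfHomNcPolys} without spelling out the grouping argument---but your more explicit bookkeeping for both steps is the same argument, just written out in full.
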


\begin{proof}
Given that $q_1, \ldots, q_k$ are a basis
for $\FA_{D-d}^H G$,
then 
\[\FA_{D-d}^H G = \sum_{i=1}^k F q_i \]
Using Lemma \ref{lem:IplusIstarLeading},
\[W =  \left(\sum_{i=1}^k F q_i\right)^*\left(\sum_{i=1}^k F q_i\right)
 = \sum_{i=1}^k\sum_{j=1}^k F q_i^*q_j.\]
Therefore the $q_i^*q_j$ span $W_{2D}^H$.
Further, by Lemma \ref{eq:LinIndOfHomNcPolys}, the $q_i^*q_j$ must be linearly
independent.
\end{proof}

The reader who is only interested in the 
proof of Theorem \ref{thm:main}
can skip from here to the next section.

\begin{prop}
\label{prop:lowdeg}
If $I \subseteq \FA$ is a left ideal
generated by polynomials of degree at most $d$,
then there is a subspace $V$ of $\FA$ such that 
\begin{enumerate}
\item 
 \label{it:lowdeg1}
    \[\FA = I \oplus V, \]
    and 
    \[ V = \FA V_d^H \oplus V_{d-1}.\]
In particular, 
\[ \FA_d^H = I_d^{\ell} \oplus V_d^H \quad \text{and} \quad \FA_{e} = I_{e} \oplus V_{e}\quad \forall e \geq d-1; \mbox{ and}\]
\item \label{it:lowdeg2} 
 if  $\sum_{j=1}^{\ell} q_j^*q_j \in I + I^*,$ then $q_i \in I \oplus V_{d-1}$ for each $j$.
\end{enumerate}
\end{prop}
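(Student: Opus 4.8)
The plan is to build the complement $V$ explicitly from a complement of the leading terms at the top degree $d$, and then propagate it downward and upward using the structural lemmas already established. First I would invoke Lemma \ref{lem:Adl}\eqref{it:Adl1} to pick $p_1,\dots,p_k\in I$ of degree exactly $d$ with linearly independent leading polynomials $p_1',\dots,p_k'$ such that $I=\bigoplus_{i=1}^k\FA p_i\oplus I_{d-1}$. Choose any vector space complement $G\subseteq\FA_d^H$ with $\FA_d^H=I_d^\ell\oplus G$, and set $V_d^H:=G$. For degrees below $d$, choose complements $V_e$ recursively (for $e=d-1,d-2,\dots,0$) so that $\FA_e=I_e\oplus V_e$ and $V_e\subseteq V_{e+1}$; this is possible since $I_e=I_{e+1}\cap\FA_e$. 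Then define $V:=\FA G\oplus V_{d-1}$ (the directness of this sum is exactly Lemma \ref{lem:decomp}, which gives $I\cap\FA G=\FA I_d^\ell\cap\FA G=\{0\}$, so in particular $\FA G\cap V_{d-1}=\{0\}$ since $V_{d-1}\subseteq\FA_{d-1}$ and $\FA G$ contains no nonzero elements of degree $<d$ — a point worth spelling out since $\FA G$ is spanned by $mg$ with $g\in G$ homogeneous of degree $d$).

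The heart of item \eqref{it:lowdeg1} is the claim $\FA=I\oplus V$. For directness I would argue that if $a\in I\cap V$, write $a=\sum q_i p_i+u$ with $u\in I_{d-1}$ (using $I=\bigoplus\FA p_i\oplus I_{d-1}$) and simultaneously $a=\sum s_j v_j + w$ with $v_j\in G$, $w\in V_{d-1}$; comparing leading terms degree by degree via Lemma \ref{eq:LinIndOfHomNcPolys2} and the linear independence of $\{p_i'\}\cup\{v_j\}$ (which holds because $I_d^\ell\cap G=(0)$) forces all $q_i$ and $s_j$ to vanish, reducing to $u=w\in I_{d-1}\cap V_{d-1}=(0)$. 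For spanning, I would use that $I$ is generated in degree $\le d$, so $I=\FA I_d$ and in fact $\FA=\FA I_d^\ell\oplus\FA G\oplus(\text{lower degree stuff})$: more precisely, proceed by induction on degree, showing $\FA_e\subseteq I_e+V_e$ for $e<d$ (immediate from the choice of $V_e$), then for $e\ge d$ use $\FA_e^H=I_e^\ell\oplus\FA_{e-d}^H G$ (Lemma \ref{lem:Adl}\eqref{it:Adl2} plus Example \ref{ABdef}) together with the induction hypothesis in degree $e-1$ to peel off the top homogeneous part. The identities $\FA_d^H=I_d^\ell\oplus V_d^H$ and $\FA_e=I_e\oplus V_e$ for $e\ge d-1$ then follow by construction and by intersecting $\FA=I\oplus V$ with $\FA_e$ (using that $V=\FA V_d^H\oplus V_{d-1}$ is graded-compatible in the sense that $V_e=\FA_{e-d}V_d^H\oplus V_{d-1}$).

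For item \eqref{it:lowdeg2}, suppose $\sum_{j=1}^\ell q_j^*q_j\in I+I^*$. Decompose each $q_j=r_j+v_j$ with $r_j\in I$ and $v_j\in V$; I want to show each $v_j\in V_{d-1}$, equivalently that the component of $v_j$ in $\FA V_d^H$ vanishes. Write $v_j = w_j + t_j$ with $w_j\in\FA V_d^H = \FA G$ and $t_j\in V_{d-1}$. Expanding $\sum q_j^*q_j$ and absorbing every term containing a factor from $I$ (on the left of a $*$, it lands in $I^*$; on the right, in $I$), one gets $\sum_j v_j^*v_j\in I+I^*$, and then expanding $v_j=w_j+t_j$ and pushing $t_j$-terms of low degree around, the point is to extract from the top-degree part that $\sum_j w_j^*w_j$ has leading behavior forcing $w_j=0$. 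Concretely, let $e=\max_j\deg w_j$ (assume for contradiction some $w_j\ne0$); the degree-$2e$ leading part of $\sum w_j^*w_j$ is $\sum_{\deg w_j=e}(w_j')^*w_j'\in W:=G^*\FA_{2(e-d)}^H G$ where $w_j'$ is the leading polynomial of $w_j$, which is nonzero since it is a sum of nonzero positive terms in distinct-monomial-controlled pieces (this needs the observation that a nonzero hermitian sum of squares of homogeneous polynomials is nonzero, which follows from Lemma \ref{eq:LinIndOfHomNcPolys}-style monomial arguments). But by Lemma \ref{lem:IplusIstarLeading}, $\FA_{2e}^H=(I+I^*)_{2e}^\ell\oplus W$, and the leading part of $\sum q_j^*q_j\in I+I^*$ of degree $2e$ lies in $(I+I^*)_{2e}^\ell$ — contradiction unless all the degree-$e$ pieces, and inductively all of $w_j$, vanish. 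Hence $w_j=0$, so $v_j\in V_{d-1}$ and $q_j=r_j+v_j\in I\oplus V_{d-1}$.

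The main obstacle I anticipate is the bookkeeping in item \eqref{it:lowdeg2}: carefully justifying that after discarding all mixed terms involving $I$, the surviving identity is genuinely $\sum v_j^*v_j\in I+I^*$ with the $v_j\in V$, and then that the top-degree leading polynomial of $\sum w_j^*w_j$ is nonzero and sits in $W$ rather than being contaminated by lower-degree cross terms between $w_j$ and $t_j$ — this requires the degree estimate $\deg(w_j^*t_j)<2e$ when $\deg w_j=e$, $\deg t_j\le d-1<e$ (which holds once $e\ge d$, and the case $e\le d-1$ is vacuous since then $w_j\in V_{d-1}$ already), together with Lemma \ref{lem:basis4W2DH} to see $W$ concretely. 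Everything else is a matter of assembling Lemmas \ref{lem:Adl}, \ref{lem:decomp}, \ref{lem:IplusIstarLeading} and \ref{eq:LinIndOfHomNcPolys2} in the right order.
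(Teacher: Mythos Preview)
Your proposal is correct and follows essentially the same route as the paper: construct $V$ as $\FA G \oplus U$ where $G$ complements $I_d^\ell$ in $\FA_d^H$ and $U$ complements $I_{d-1}$ in $\FA_{d-1}$, invoke Lemma~\ref{lem:decomp} for directness, and for item~\eqref{it:lowdeg2} strip off the $I$-components, then use Lemma~\ref{lem:IplusIstarLeading} to see that the top homogeneous piece of the surviving sum of squares lands simultaneously in $(I+I^*)_{2e}^\ell$ and in $W=G^*\FA_{2(e-d)}^H G$, forcing it to vanish. The only cosmetic differences are that the paper verifies spanning by checking monomials directly rather than by your degree induction, and it defines $D$ as half the degree of $\sum w_j^*w_j$ rather than as $\max_j\deg w_j$ --- but these coincide, as you implicitly note.
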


\begin{proof}
To prove item \eqref{it:lowdeg1}, first choose a space $G \subset \FA_d^H$
so that
\begin{equation}
 \label{eq:IplusG}
   \FA_d^H = I_d^{\ell} \oplus G.
\end{equation}

By Lemma \ref{lem:decomp}, 
$I \cap \FA G = (0)$.
Next, choose $U \subset \FA_{d-1}$ so that
$\FA_{d-1} = I_{d-1} \oplus U$. In particular $U \cap I = (0)$.

 Of course $U\cap \FA G =\emptyset$
  because $U\subset \FA_{d-1}$ and $\FA G \subset \bigoplus_{i=d}^{\infty} \FA_i^H$.
 Given a word $m$, if the degree of $m$ is $d$ or less, then evidently
 $m\in I_{d-1} \oplus U \subset I\oplus \FA G \oplus U$.  If the degree of $m$ exceeds $d$,
 then $m=pw$ where $w$ is a word of length $d$ and $p$ is a word.
By equation \eqref{eq:IplusG}, $w=h'+g$
for some $h' \in I_d^{\ell}$ and some $g \in G$.
Let $h \in I_d$ be such that the leading polynomial
of $h$ is $h'$ so that $h'-h \in \FA_{d-1}$.
 Thus, $ph \in I$ and $pg \in \FA G$ and it follows that
  $m = ph + pg + p(h' - h) \in I\oplus \FA G \oplus \FA_{d-1}$. 
Consequently,
\[
  \FA = I\oplus \FA G \oplus U. 
\]

Let $V$ be equal to
\[ V = \FA G + U.\]
The space $\FA G$ has polynomials whose terms have degree
at least $d$, whereas the space $U$ has polynomials of degree less than $d$.
Therefore $U = V_{d-1}$.
Further, this implies that $V_{d}^H$ must
be contained in $\FA G$.  The homogeneous degree
$d$ polynomials in $\FA G$ are precisely those in $G$.
Therefore $G = V_d^H$.

Turning to item \eqref{it:lowdeg2},
suppose there exists a sum of squares
$\sum_{j=1}^{\ell} q_j^*q_j \in I + I^*$.
Decompose each $q_j$ as
\[q_j = q_{j, I} + q_{j, \FA  V^H_d} + q_{j, V_{d-1}} \]
where $q_{j, W} \in W$ for each space $W$ used.
This implies
\begin{align}
\notag \sum_{j=1}^{\ell} q_j^*q_j = \sum_{j=1}^{\ell} (q_{j, I} + q_{j,
\FA  V^H_d} + q_{j, V_{d-1}})^*(q_{j, I}
+ q_{j,\FA  V^H_d} + q_{j, V_{d-1}})\\
 \label{eq:sosintothree}
 = \sum_{i=1}^{\ell} \left[(q_{i, I} + q_{i, \FA
 V^H_d} + q_{i, V_{d-1}})^*q_{i, I} + q_{i, I}^*(q_{i,
 \FA  V^H_d} + q_{i, V_{d-1}})\right]
 \\
 \label{eq:lastline}
 + \sum_{j=1}^{\ell} (q_{j, \FA  V^H_d} + q_{j,
 V_{d-1}})^*(q_{j, \FA  V^H_d} + q_{j, V_{d-1}}) \in I + I^*.
 \end{align}
Since (\ref{eq:sosintothree}) is in $I + I^*$,
this implies that \ref{eq:lastline}
is in $I + I^*$.

Assume
\[\sum_{j=1}^{\ell} (q_{j, \FA  V^H_d})^*(q_{j, \FA  V^H_d})
\neq 0\]
Suppose $\displaystyle\sum_{j=1}^{\ell} (q_{j, \FA  V^H_d})^*(q_{j, \FA  V^H_d})$ is degree $2D$, for $D \geq d$, and let each $q_{j,\FA V_d^H}$ be equal to
\[q_{j,\FA V_d^H} = v_j + w_j,\]
where $v_j \in \FA_{D-d}^H V_d^H$ and where $\deg(w_j) < D$.
Also, by definition each $q_{j,V_{d-1}}$ must have degree less than $d$.
Therefore
\begin{align}
\notag \sum_{j=1}^{\ell} (q_{j, \FA  V^H_d} + q_{j,
 V_{d-1}})^*(q_{j, \FA  V^H_d} + q_{j, V_{d-1}}) = \sum_{j=1}^{\ell} v_j^* v_j\\
\label{eq:deglessthan2D} + \sum_{i=1}^{\ell} \left[(v_i + w_i + q_{i, V_{d-1}})^*(w_i + q_{i, V_{d-1}}) + (w_i + q_{i, V_{d-1}})^* v_{i}\right]
 \end{align}
 We see that (\ref{eq:deglessthan2D}) has degree less than $2D$ and that $$\sum_{j=1}^{\ell}v_j^*v_j \in \FA_{2D}^H.$$
 Therefore the leading polynomial of $(\ref{eq:lastline})$ is $$\sum_{j=1}^{\ell}v_j^*v_j \in (V_d^H)^*\FA_{2(D-d)}^H V_d^H.$$
Since $(\ref{eq:lastline})$ is in the space $I + I^*$, this implies that
 $$\sum_{j=1}^{\ell}v_j^*v_j \in (I + I^*)_{2D}^{\ell}.$$
 By Lemma \ref{lem:IplusIstarLeading} and by the decomposition of $\FA_{D'}^H$ in (\ref{eq:decompos}), this implies that \[
 \sum_{j=1}^{\ell}v_j^*v_j \in (I + I^*)_{2D}^{\ell} \cap (V_d^H)^*\FA_{2(D-d)}^H V_d^H = (0).
 \]
 This implies that each $v_j = 0$, which is a contradiction.
Therefore each $q_{j, V_d^H} = 0$, which implies that each $q_i \in I \oplus V_{d-1}$.
\end{proof}

With these lemmas, we proceed to prove Theorem \ref{thm:lowdeg}.

\subsection{Proof of Theorem \ref{thm:lowdeg} }

\begin{proof}
The direction $(1) \Rightarrow (2)$ follows by definition, and the direction $(2) \Rightarrow (3)$ is clear.

Assume (3).
Decompose $\FA_{d-1}$ as
\[\FA_{d-1} = I_{d-1} \oplus V\]
for some space $V$.
Decompose $\FA_d^H$ as
\[\FA_d^H = I_d^{\ell} \oplus V_d^H\]
for some space $V_d^H \subset \FA_d^H$.    Then as in Proposition \ref{prop:lowdeg},
\[\FA = I \oplus \FA V_d^H \oplus V,\]
where $V$ takes the place of $V_{d-1}$.

Suppose
\[\sum_{j=1}^{k} q_j^*q_j \in I + I^*. \]
By Proposition \ref{prop:lowdeg}, each $q_j \in I \oplus V$. Let each $q_i$ be equal to
\[q_j = \iota_j+ v_j,\]
where $\iota_j \in I$ and $v_j \in V$. Then
\begin{align}
\notag \sum_{j=1}^{k} q_j^*q_j &=
\sum_{i=1}^{\ell} v_i^*v_i \\
\label{eq:Iline} &+ \sum_{j=1}^{k} [\iota_j^*v_j + v_j^*\iota_j + \iota_j^*\iota_j].
\end{align}
The line (\ref{eq:Iline}) is in $I + I^*$, which implies that $\displaystyle\sum_{i=1}^{k}v_i^*v_i \in I + I^*$. By (3), each $v_i$ must be equal to $0$. Therefore $q_j = \iota_j \in I$ for each $j$. This implies (1).
\end{proof}

\section{An Algorithm for Computing $\rr{I}$}
\label{sec:alg}

   It is of interest  to describe and to
     compute      the {\qrr} of a left ideal $I$, in part
 because of its close relation to the $\Pi$-{\rad} of $I$.
This section gives an algorithm and theory
which shows that the algorithm does indeed have very
desirable  properties.

\subsection{The Real Algorithm}
\label{sub:alg}

The following is an algorithm for computing
$\rr{I}$ given a finitely-generated
left ideal
$I \subset \FA$.
Here, let $I = \sum_{i=1}^k \FA p_i$, where the  $p_i \in \FA$
are polynomials with $\deg p_i \leq d$.

\begin{enumerate}
\item
\label{A1} Let $I^{(0)} = I$.

\item
\label{A2} 
At each step $k$ we have an ideal $I^{(k)} \subset \rr{I}$
generated by polynomials of degree bounded by $d$.
Find a sum of squares
$\sum_{i=1}^n q_i^*q_i \in I^{(k)} + {I^{(k)}}^*$ such that for
each $j$ one has $q_j \not\in I$ and $\deg (q_j) < d$.
If such a sum of squares is not obvious, the
following algorithm, which we will refer to as the \textbf{SOS Algorithm}, either computes such a sum of squares or proves that none exists.

\textbf{SOS Algorithm}

\begin{enumerate}
\item
\label{Aia} Find a complementary space $V^{(k)} \subset \FA_{d-1}$
such that
\[\FA_{d-1} = I_{d-1}^{(k)} \oplus V^{(k)}.\]
Find a basis $\{v_1, \ldots, v_{\ell}\}$ for $V^{(k)}$.

\item
\label{Aib}
 Parameterize the symmetric elements of $I^{(k)} + {I^{(k)}}^*$
 which appear in the span of
$\{v_i^*v_j\}$ as
\[\begin{pmatrix}
v_1 \\ \vdots \\ v_{\ell}
\end{pmatrix}^T
(\alpha_1A_1 + \ldots \alpha_mA_m)
\begin{pmatrix}
v_1 \\ \vdots \\ v_{\ell}
\end{pmatrix},\]
for some Hermitian matrices $A_i \in F^{\ell \times \ell}$.

\begin{itemize}
\item To find the matrices $A_1, \ldots, A_m$, one does the following.

Find a basis $\iota_1, \ldots, \iota_{p}$ for the symmetric elements of
\[\left(I^{(k)}+I^{(k)^*}\right)_{2d-2}.\] 
The set (\ref{eq:spanOfIplusIstarHerm}) in Proposition \ref{cor:computeIplusIstar} gives
a spanning set from which one can choose a maximal linearly independent
subset.  Solve the equation
\begin{equation}
\label{eq:theSofEq}\begin{pmatrix}
v_1 \\ \vdots \\ v_{\ell}
\end{pmatrix}^T
\left(
\begin{array}{ccc}
a_{11}&\ldots& a_{1\ell}\\
\vdots&\ddots& \vdots\\
a_{\ell 1}&\ldots&a_{\ell\ell}
\end{array}
\right)\begin{pmatrix}
v_1 \\ \vdots \\ v_{\ell}
\end{pmatrix} = \alpha_1 \iota_1 + \ldots + \alpha_{p} \iota_p.
\end{equation}

This amounts to solving a system of linear equations
in variables $a_{ij}$ and $\alpha_j$, which system is given by setting the
coefficient of each monomial in (\ref{eq:theSofEq}) equal to zero.
Project this set of solutions onto the coordinates
$a_{ij}$ to get the set
\[\{A = (a_{ij})_{1 \leq i,j \leq \ell} \mid
\exists \alpha_1, \ldots, \alpha_m\ \colon (\ref{eq:theSofEq})\
\mathrm{holds}\}.\] Find a basis $A_1, \ldots, A_m$ for this
new projected space.
\end{itemize}

\item
\label{Aic} Solve the following linear matrix inequality for $(\alpha_1, \ldots, \alpha_m)$.
\[\alpha_1A_1 + \ldots + \alpha_mA_m \succeq 0 \quad \text{ and } \quad (\alpha_1, \ldots, \alpha_m)
 \neq 0.\]

\begin{itemize}
\item If there is a solution $(\alpha'_1, \ldots, \alpha'_m) \neq 0$,
then let $q_1, \ldots, q_n$ be the polynomials
\[\begin{pmatrix}
q_1 \\ \vdots \\ q_n
\end{pmatrix} = \sqrt{\alpha'_1A_1 + \ldots \alpha'_mA_m}\begin{pmatrix}
v_1 \\ \vdots \\ v_{\ell}
\end{pmatrix}.\]
Then $\sum_{i=1}^n q_i^*q_i \in I^{(k)} + {I^{(k)}}^*$ is such that
each $q_j \not\in I$ and $\deg q_j < d$.

\item If this linear matrix inequality has no solution, then there exists no sum of squares
    $\sum_{i=1}^n q_i^*q_i \in I^{(k)} + {I^{(k)}}^*$ such that
each $q_j \not\in I$ and $\deg q_j < d$.
\end{itemize}
\end{enumerate}

\item
\label{A3}
 If there exists a sum of squares $\sum_{i=1}^n q_i^*q_i
 \in I^{(k)} + {I^{(k)}}^*$ such that
each $q_j \not\in I$ and $\deg q_i < d$, then
let $I^{(k+1)} = I^{(k)} + \sum_{i=1}^n \FA q_i$, let $k = k+1$,  note
that $I^{(k+1)}$ is again an ideal,
and go to step 2.

\item
\label{A4}
 If there exists no sum of squares
 $\sum_{i=1}^n q_i^*q_i \in I^{(k)} + {I^{(k)}}^*$ such that
each $q_j \not\in I$ and $\deg q_j < d$, then output $I^{(k)}$ and end
the Algorithm.
\end{enumerate}
\qed

The following theorem presents some appealing properties of the Real Algorithm.

\begin{thm}
\label{thm:algorStops}
Let $I$ be the left ideal generated by polynomials
$p_1, \ldots, p_k$, with $\deg(p_i) \leq d$
for each $i$.  The following are true for applying the Algorithm described in
\S \ref{sub:alg} to $I$. 

\begin{enumerate}
\item
 \label{degreed}
   This Algorithm involves only computations of polynomials which have degree less than $d$.
\item The Algorithm is guaranteed to terminate in
a finite number of steps.

\item When the Real Algorithm terminates, it outputs the ideal $\rr{I}$.
\end{enumerate}
\end{thm}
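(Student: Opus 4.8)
The plan is to prove the three items more or less in order, since each builds on the previous one. For item \eqref{degreed}, I would trace through the SOS Algorithm and observe that every object it manipulates is bounded in degree by the data laid out in \S\ref{sec:prepthy}: the complementary space $V^{(k)}\subseteq\FA_{d-1}$ and its basis have degree $<d$; the products $v_i^*v_j$ have degree $\le 2d-2$; by Proposition \ref{cor:computeIplusIstar}(1), the symmetric part of $(I^{(k)}+{I^{(k)}}^*)_{2d-2}$ equals $I^{(k)}_{2d-2}+{I^{(k)}_{2d-2}}^*$ and is spanned by the explicit set \eqref{eq:spanOfIplusIstarHerm}, again of degree $\le 2d-2$; solving \eqref{eq:theSofEq} is a finite linear-algebra problem over coefficients of monomials of degree $\le 2d-2$; the LMI in step \eqref{Aic} is finite-dimensional; and the new generators $q_i$ produced have $\deg q_i<d$. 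So every polynomial computed has degree $<d$ (or $<2d$ for the auxiliary symmetric elements), and more importantly $I^{(k+1)}=I^{(k)}+\sum\FA q_i$ is again generated in degree $\le d$, so the loop invariant in step \eqref{A2} is preserved and the whole algorithm stays at the fixed degree bound $d$.

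For item (2), termination, I would argue by a dimension-monotonicity argument at degree $d-1$. Set $J^{(k)}:=I^{(k)}_{d-1}\subseteq\FA_{d-1}$, a subspace of the \emph{finite-dimensional} space $\FA_{d-1}$. Each time the algorithm does not halt, it has found $q_1,\dots,q_n$ with $\deg q_i<d$, $q_i\notin I$, and $\sum q_i^*q_i\in I^{(k)}+{I^{(k)}}^*$; since the $q_i$ have degree $<d$ they lie in $\FA_{d-1}$, and I claim at least one $q_i\notin I^{(k)}$ — indeed if all $q_i\in I^{(k)}$ then, because $I^{(k)}$ is an ideal, $\sum\FA q_i\subseteq I^{(k)}$ and nothing changes, but more to the point the decomposition $\FA_{d-1}=I^{(k)}_{d-1}\oplus V^{(k)}$ together with the reduction in the Corollary to Theorem \ref{thm:lowdeg} shows the $v$-components of the $q_i$ would vanish, contradicting that a nonzero solution to the LMI was found. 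Hence $q_i\in I^{(k+1)}_{d-1}\setminus I^{(k)}_{d-1}$ for some $i$, so $\dim J^{(k+1)}>\dim J^{(k)}$. Since $\dim\FA_{d-1}<\infty$, the loop executes only finitely many times.

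For item (3), correctness of the output, I would show both $I^{(k_{\mathrm{final}})}\subseteq\rr{I}$ and $\rr{I}\subseteq I^{(k_{\mathrm{final}})}$. The inclusion $\subseteq$ is the loop invariant stated in step \eqref{A2}: at step $0$, $I^{(0)}=I\subseteq\rr{I}$; and if $I^{(k)}\subseteq\rr{I}$ and we add generators $q_i$ with $\sum q_i^*q_i\in I^{(k)}+{I^{(k)}}^*\subseteq\rr{I}+{\rr{I}}^*$, then by definition of a real ideal each $q_i\in\rr{I}$, so $I^{(k+1)}=I^{(k)}+\sum\FA q_i\subseteq\rr{I}$. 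For the reverse inclusion, note the algorithm halts precisely when there is \emph{no} sum of squares $\sum q_i^*q_i\in I^{(k)}+{I^{(k)}}^*$ with all $q_i\notin I$ and $\deg q_i<d$; by the completeness of the SOS Algorithm (step \eqref{Aic}, the LMI having no solution genuinely certifies no such sum of squares exists — this rests on Lemma \ref{lem:IplusIstarLeading} and Lemma \ref{lem:basis4W2DH} giving the parameterization in step \eqref{Aib}), this means: whenever $\sum q_i^*q_i\in I^{(k)}+{I^{(k)}}^*$ with $\deg q_i<d$, then all $q_i\in I^{(k)}$. By the Corollary to Theorem \ref{thm:lowdeg}, applied to $I^{(k)}$ (which is generated in degree $\le d$ by item (1)), this is exactly the statement that $I^{(k)}$ is real. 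A real left ideal containing $I$ contains $\rr{I}$ by definition of the real radical, and combined with $I^{(k)}\subseteq\rr{I}$ we get $I^{(k)}=\rr{I}$.

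The main obstacle I anticipate is not any single one of these three steps in isolation but the verification underlying the halting condition in item (3): one must be certain that the LMI feasibility test in step \eqref{Aic}, together with the parameterization in step \eqref{Aib}, is \emph{complete} — i.e., that infeasibility of the LMI really does imply there is no degree-$<d$ sum of squares of the forbidden type, not merely none of the special form $v^T(\sum\alpha_iA_i)v$. This is where one leans hardest on \S\ref{sec:prepthy}: the decomposition $\FA_{d-1}=I^{(k)}_{d-1}\oplus V^{(k)}$ reduces any candidate sum of squares to its $V^{(k)}$-components (Corollary to Theorem \ref{thm:lowdeg}), and Proposition \ref{cor:computeIplusIstar} together with Lemmas \ref{lem:IplusIstarLeading} and \ref{lem:basis4W2DH} guarantees that every symmetric element of $(I^{(k)}+{I^{(k)}}^*)$ lying in $\mathrm{span}\{v_i^*v_j\}$ is captured by the matrices $A_1,\dots,A_m$; a sum of squares is then precisely a positive semidefinite element of this pencil. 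Getting this bookkeeping exactly right — in particular that "$q_j\notin I$" versus "$q_j\notin I^{(k)}$" interacts correctly with the decomposition and with the definition of real ideal — is the delicate part of the argument.
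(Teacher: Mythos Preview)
Your proof is correct and follows the same approach as the paper: item (2) via the strictly increasing chain $I^{(k)}_{d-1}$ inside the finite-dimensional space $\FA_{d-1}$, and item (3) via the loop invariant $I^{(k)}\subseteq\rr{I}$ together with Theorem \ref{thm:lowdeg} (the paper invokes condition (3) of that theorem directly rather than its Corollary, but the content is identical). One small correction: Lemmas \ref{lem:IplusIstarLeading} and \ref{lem:basis4W2DH}, which you cite for the completeness of the parameterization in step \eqref{Aib}, concern degree $2D$ with $D\ge d$ and play no role here; at degree $\le 2d-2$ the relevant result is Proposition \ref{cor:computeIplusIstar} alone, which you also cite.
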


\begin{proof}\begin{enumerate}
\item This is clear from the steps of the Algorithm.

\item In the Algorithm, at each step
the ideal $I^{(k+1)} = I^{(k)} + \sum_{i=1}^n \FA q_i$
is formed from some polynomials
$q_i$ with degree bounded by $d -1$.
The chain $I^{(k)}_{d-1}$ is strictly increasing and hence, in view of item
 \ref{degreed},
\[
   I^{(0)}_{d-1} \subsetneq I^{(1)}_{d-1} \subsetneq I^{(2)}_{d-1} \subsetneq \cdots .
\]
  Since each $I^{(k)}_{d-1}$ is a subset of the finite dimensional vector
  space $\FA_{d-1}$, this chain, and thus the Algorithm, terminates.

\item First of all, $I^{(0)} \subset \rr{I}$.
Suppose by induction that $I^{(k)} \subset \rr{I}$.
If there exists a  sum of squares $\sum_{i=1}^n q_i^*q_i \in I^{(k)}$ such that $q_i \not\in I$ for each $i$,
it follows that
\[\sum_{i=1}^n q_i^*q_i \in I^{(k)} \subset \rr{I}.\]
This implies that $q_i \in \rr{I}$ for each $i$. Therefore
\[I^{(k)} + \sum_{i=1}^n \FA q_i \subseteq \rr{I}.\]
Continue this process until there is an $I^{(k')} \subset
\rr{I}$ such that there exists no such sum of squares.
By Theorem \ref{thm:lowdeg}, the left ideal $I^{(k')}$ is real, and hence equal to $\rr{I}$. The algorithm also stops at this point, and so $\rr{I}$ is the output.
\end{enumerate}
\end{proof}

\subsection{An Example of Applying the Algorithm}

We apply the Algorithm on the left ideal
\[I = \FA \left(\left[x_1^*x_1 + x_2x_3x_3^*x_2^*\right]^*\left[x_1^*x_1
+ x_2x_3x_3^*x_2^*\right] + x_4^*x_4\right).\]
We see that
\[ p:= [x_1^*x_1 + x_2x_3x_3^*x_2^*]^*[x_1^*x_1 + x_2x_3x_3^*x_2^*]
+ x_4^*x_4\]
is in $I$ and is a sum of squares.
We take $q_1 = x_1^*x_1 + x_2x_3x_3^*x_2^*$ and $q_2 = x_4$, which have degree less than $8$, to form the ideal $I^{(1)}$ equal to
\[I^{(1)} = \FA (x_1^*x_1 + x_2x_3x_3^*x_2^*) + \FA x_4.\]
Note $I^{(0)} \subset I^{(1)}$.

In $I^{(1)}$ there is a sum of squares
\[x_1^*x_1 + x_2x_3x_3^*x_2^* \in I^{(1)}.\]
The ideal
 $I^{(2)}$ is constructed similarly and is
\[I^{(2)} = \FA x_1 + \FA x_3^*x_2^* + \FA x_4.\]
At this point it may not be obvious that whether or not there is a nontrivial sum of squares
in $I^{(2)} +I^{(2)^*}$. We turn to the SOS Algorithm to
either find such a sum of squares or prove that one does not exist.

Since
$I^{(2)}$ is generated by polynomials of degree bounded by two,
let $d = 2$.

\textit{Step  \ref{Aia} }.
First we find a complementary space $V^{(2)}$.
The space $I^{(2)}_1$
is the span
\[I^{(2)}_1 = \mathrm{span}\{x_1, x_4 \}. \]
Choose $V^{(2)}$ to be
\[V^{(2)} = \mathrm{span}\{x_1^*, x_2, x_2^*, x_3, x_3^*, x_4^*, 1\}\]
so that $\FA_1 = I^{(2)}_1 \oplus V^{(2)}$.

\textit{Step \ref{Aib}}.
Elements of $I^{(2)} + I^{(2)^*}$ are sums of monomials with the rightmost letters being
$x_1, x_3^*x_2^*$ or $x_4$, or the leftmost letters being $x_1^*, x_2x_3$ or $x_4^*$. Because $x_1, x_4 \not\in V^{(2)}$, the only such polynomials in the span of the
$v_i^*v_j$ are polynomials of the form
$\alpha x_3^*x_2^* + \beta x_2 x_3$, where $\alpha, \beta \in F$.
Consequently, the only symmetric elements of $I^{(2)} + {I^{(2)}}^*$
in $\mathrm{span}\{v_i^*v_j\}$
are polynomials of the form $\alpha (x_3^*x_2^* + x_2x_3)$, with $\alpha \in F$.

\textit{Step \ref{Aic}}. \
We then parameterize all elements of
$\left(I^{(2)} + {I^{(2)}}^*\right) \cap\ \mathrm{span}\{v_i^*v_j\}$
as
\[\alpha \begin{pmatrix}
x_1^* \\ x_2 \\ x_2^* \\ x_3 \\ x_3^* \\ x_4^* \\ 1
\end{pmatrix}^*
\left(
\begin{array}{ccccccc}
0&0&0&0&0&0&0\\
0&0&0&0&0&0&0\\
0&0&0&1&0&0&0\\
0&0&1&0&0&0&0\\
0&0&0&0&0&0&0\\
0&0&0&0&0&0&0\\
0&0&0&0&0&0&0
\end{array}
\right)
\begin{pmatrix}
x_1^* \\ x_2 \\ x_2^* \\ x_3 \\ x_3^* \\ x_4^* \\ 1
\end{pmatrix} \]

The linear matrix inequality
\[\alpha
\left(
\begin{array}{ccccccc}
0&0&0&0&0&0&0\\
0&0&0&0&0&0&0\\
0&0&0&1&0&0&0\\
0&0&1&0&0&0&0\\
0&0&0&0&0&0&0\\
0&0&0&0&0&0&0\\
0&0&0&0&0&0&0
\end{array}
\right)
\succeq 0 \]
has no nonzero solution in $\alpha$ since the matrix
in question is neither positive semi-definite
nor negative semi-definite.
This means we go to Step \ref{A4} of the Algorithm which says stop.
Therefore
\[\rr{I} = \FA x_1 + \FA x_3^*x_2^* + \FA x_4.\]
\qed

\section{A Nullstellensatz for $\FNSx$}
\label{sec:pfmain}

We provide the remaining ingredients for the proof of Theorem \ref{thm:main}, 
namely Theorem \ref{thm:existsLfinite} and Proposition \ref{prop:finite}. 
The proof also depends on the Real Algorithm.

\subsection{Existence of Positive Linear Functionals}

The following is the main technical result used in the proof of Theorem \ref{thm:main}.

\begin{thm}
\label{thm:existsLfinite}
 Let $I$ be a finitely-generated real left ideal.
Then there exists a positive hermitian $F$-linear functional $L$ on $\FA$ such that 
$$I=\{a \in \FA \mid L(a^*a) =0\}.$$
\end{thm}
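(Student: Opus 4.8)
The plan is to construct $L$ by a Hahn–Banach separation argument on a finite-dimensional space. Fix $d$ so that $I$ is generated by polynomials of degree $\le d$, and work inside the finite-dimensional vector space $\FA_{2d-2}$ of polynomials of degree $\le 2d-2$, together with its subspace $(\FA_{2d-2})_h$ of hermitian elements. Using Proposition \ref{prop:lowdeg} (and Theorem \ref{thm:lowdeg}), choose a complement $V$ with $\FA_{d-1}=I_{d-1}\oplus V$ and a complement so that $\FA=I\oplus \FA V_d^H\oplus V$; the key consequence of $I$ being real is that the cone $C:=\{\sum_j q_j^*q_j : q_j\in V\}\subseteq (\FA_{2d-2})_h$ meets $(I+I^*)$ only in $0$. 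Indeed, if $\sum q_j^* q_j\in I+I^*$ with $q_j\in V$, Theorem \ref{thm:lowdeg}(3) forces each $q_j=0$.

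Next I would separate. Consider in $(\FA_{2d-2})_h$ the closed convex cone $C$ of sums $\sum q_j^*q_j$, $q_j\in V$ (it is a finitely generated, hence closed, cone by Lemma \ref{lem:basis4W2DH}-type reasoning, being the cone generated by the finitely many $v_i^*v_j$-combinations that are PSD), and the subspace $P:=(I+I^*)_{2d-2}\cap (\FA_{2d-2})_h$. Since $C\cap P=\{0\}$ and $C$ is a pointed closed cone (pointedness: $\sum q_j^*q_j=0$ forces all $q_j=0$), a separation/Minkowski-functional argument produces a linear functional $L_0$ on $(\FA_{2d-2})_h$ with $L_0\equiv 0$ on $P$ and $L_0(v^*v)>0$ for every nonzero $v\in V$, i.e. $L_0$ is strictly positive on the "truncated GNS form" restricted to $V$. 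One then extends $L_0$ to a hermitian linear functional $L$ on all of $\FA$, defining $L(a)$ via the decomposition $a = \iota + \sum b_i + u$ with $\iota\in I$, $u\in V$, setting $L$ to vanish on $I+I^*$ and on the "$\FA V_d^H$" part, and on $V$ to agree with $L_0$; symmetrize to make $L$ hermitian. The positivity $L(a^*a)\ge 0$ for all $a\in\FA$ is then checked by writing $a^*a$ and reducing its hermitian part modulo $I+I^*$ down to degree $\le 2d-2$ using Proposition \ref{cor:computeIplusIstar}, landing in $C$.

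Finally I would verify $I=\{a : L(a^*a)=0\}$. The inclusion $\subseteq$ is immediate since $L$ kills $I+I^*$ and $a\in I\Rightarrow a^*a\in I^*\subseteq I+I^*$. For $\supseteq$: if $L(a^*a)=0$, decompose $a=\iota+v$ modulo $I$ (with $v\in V$, after first absorbing the $\FA V_d^H$-part — here one needs that $a^*a$ having $L$-value zero forces the top-degree "$v_j\in\FA_{D-d}^HV_d^H$" pieces to vanish, exactly as in the proof of Proposition \ref{prop:lowdeg}, by a leading-term argument in $(I+I^*)^\ell$), so that $a^*a \equiv v^*v \pmod{I+I^*}$ with $v\in V$; then $0=L(a^*a)=L_0(v^*v)$ forces $v=0$ by strict positivity, hence $a\in I$.

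The main obstacle I expect is controlling degrees: the sum-of-squares $a^*a$ can have arbitrarily large degree, and one must systematically reduce its hermitian part modulo $I+I^*$ into the fixed finite-dimensional window $\FA_{2d-2}$ while tracking that the reduced element lands in the cone $C$ over $V$ (not merely over $\FA_{d-1}$). This is precisely where Proposition \ref{cor:computeIplusIstar}, Lemma \ref{lem:IplusIstarLeading}, and the leading-polynomial bookkeeping of \S\ref{subsec:pf} do the work; getting the strict positivity of $L_0$ on all nonzero $v\in V$ (so that $L$ genuinely detects membership in $I$, not a larger ideal) is the delicate quantitative point, and it rests on the pointedness of $C$ guaranteed by $I$ being real.
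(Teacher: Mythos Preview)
Your separation step on $\FA_{2d-2}$ is essentially the same as the paper's (the paper phrases it via finding a positive definite matrix $A$ orthogonal to $\{B: q^*Bq\in I+I^*\}$, which is the dual formulation of your cone separation), and that part is fine.

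The genuine gap is in your extension of $L_0$ to all of $\FA$. You propose to set $L$ equal to zero on $I+I^*$ and on the $\FA V_d^H$ piece of the decomposition $\FA=I\oplus \FA V_d^H\oplus V_{d-1}$. But then take any nonzero $w\in \FA V_d^H$, say $w=mg$ with $m$ a monomial and $g\in V_d^H$. One has $w^*w=g^*m^*mg\in \FA V_d^H$, so $L(w^*w)=0$, yet $w\notin I$. Thus your $L$ fails the conclusion $\{a:L(a^*a)=0\}=I$. Your claimed fix, reducing $a^*a$ modulo $I+I^*$ into the cone $C\subset\FA_{2d-2}$, cannot work here: the leading homogeneous part of $w^*w$ lies in $(V_d^H)^*\FA_{2(D-d)}^H V_d^H$, which by Lemma~\ref{lem:IplusIstarLeading} is exactly the \emph{complement} of $(I+I^*)_{2D}^\ell$ in $\FA_{2D}^H$; no element of $I+I^*$ can cancel it. So Proposition~\ref{cor:computeIplusIstar} and Lemma~\ref{lem:IplusIstarLeading} do not reduce $w^*w$ into your window, they obstruct it. Relatedly, in your $\supseteq$ argument you invoke the leading-term mechanism of Proposition~\ref{prop:lowdeg} to kill the $\FA V_d^H$ component, but that proposition uses the hypothesis $\sum q_j^*q_j\in I+I^*$, not $L(\sum q_j^*q_j)=0$; since your $L$ is blind to $\FA V_d^H$, the latter gives no information about that component.

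The paper repairs this by \emph{not} projecting into a fixed finite window. After obtaining $L$ on $\FA_{2d-2}$ exactly as you do, it extends $L$ to $\FA_{2D}$ inductively in $D$: on $(I+I^*)_{2D}^\ell$ the value is forced by the requirement $L((I+I^*)_{2D})=0$; on the complementary space $W_{2D}^H=(V_d^H)^*\FA_{2(D-d)}^H V_d^H$ one takes a basis $r_1,\dots,r_k$ of $V_D^H=\FA_{D-d}^H V_d^H$, sets $L(r_i^*r_j)=c\,\delta_{ij}$, and then chooses $c>0$ large enough that the Gram block $\begin{pmatrix} cI & R\\ R^* & S\end{pmatrix}$ is positive definite (the lower block $S$ is already positive definite by induction). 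This guarantees $L(a^*a)>0$ for every $a\in\FA_D\setminus I_D$, in particular for $a\in\FA V_d^H$, which is exactly what your construction cannot achieve.
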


\begin{proof}
Let $I$ be generated by a set of polynomials with degree bounded by $d$. 
We will first construct a linear functional $L$ on $\FA_{2d-2}$ such that
\begin{enumerate}
 \renewcommand{\labelenumi}{(\roman{enumi})}
\item $L((I+I^\ast) \cap \FA_{2d-2})=0$,
\item $L(a^\ast a) > 0$ for every $a \in \FA_{d-1} \setminus I$ and
\item $L(a^*) = L(a)^*$ for every $a \in \FA_{2d-2}$.
\end{enumerate}

Choose, by Proposition \ref{prop:lowdeg}, a subspace $V$
of $\FA$ such that
\begin{equation*}
 \FA = I \oplus \FA V_d^H \oplus V_{d-1}.
\end{equation*}
and
\begin{equation}
 \label{eq:fixVd}
\FA_{e}=I_{e}\oplus V_{e}
\end{equation}
 for each $e\ge d-1$. 
Let $q_1, \ldots, q_k$ span $V_{d-1}$,
and let $q = (q_1, \ldots, q_k)$.

Let $M_k(F)_h$ be the set of all hermitian $k \times k$ matrices with entries in $F$.
(If $F = \RR$, then this is the set of symmetric matrices in $M_k(\RR)$.)
The real vector space  $M_k(F)_h$ carries the (real-valued) inner
product $\langle C,D \rangle=\operatorname{Tr}(C D).$ 

Let $B_1, \ldots, B_k$ be an orthonormal basis for the subspace 
 $\{B \in M_k(F)_h \mid q^* B q \in I + I^*\}$
and let
$A_1, \ldots, A_m$ be its completion to an orthonormal basis for $M_k(F)_h$.
Consider $M(\alpha, \beta)$ defined by
\[ M(\alpha, \beta) = \sum_{i=1}^m \alpha_i A_i + \sum_{j=1}^k \beta_jB_j, \quad \alpha \in \RR^m, \beta \in \RR^k.\]
Since the $A_i$ and $B_j$ form a basis for $M_k(F)_h$,
the function $M(\alpha, \beta) \colon \RR^m \times \RR^k \to M_k(F)_h$ is onto.
Therefore the set $\mathcal{C}$ defined by
\[\mathcal{C} = \{ \beta \in \RR^k \mid \exists \alpha \colon M(\alpha, \beta) \succ 0\}\]
is a nonempty convex set.

If $0 \not\in \mathcal{C}$, then
there exists $x \neq 0$ such that
\[ \mathcal{C} \subset \{y \mid \langle x, y \rangle \geq 0 \}.\]
Let $B = \sum_{j=1}^k x_jB_j$.
Then
for each positive definite matrix in $M_k(F)_h$
which, since $M$ is onto, must be of
 the form $M(\alpha, \beta)$ for some $\alpha, \beta$,
\[ \langle M(\alpha, \beta), B \rangle = \langle x, \beta \rangle \geq 0.\]
Therefore the matrix $B \succeq 0$.
This is a contradiction since $I$ is real,
but
 $q^*Bq$
 is a sum of squares in $I + I^*$
of elements which are not in $I$.
Therefore, $0 \in \mathcal{C}$, which implies that there exists
$A = \sum_{i=1}^m \alpha_i A_i \succ 0$.

This $A$ is the key to the construction of $L$.
Note that $\langle A, B \rangle = 0$ for every $B \in M_k(F)_h$ 
such that $q^* B q \in I + I^*$. To show that if fact
 $\operatorname{Tr}(AB) =0$ whenever 
  $B \in M_k(F)$ and $q^*Bq \in I + I^*$, we consider
 two cases depending on the base field $F$. 
If $F = \RR$, then 
 $q^*(B + B^*)q \in I + I^*$ so
\[  2\operatorname{Tr}(AB)=\langle A, B + B^*\rangle = 0. \]
If $F = \CC$, then $q^*(B + B^*)q$ and $q^*(iB - iB^*)q$
are both in  $ I + I^*$ so
\[2\operatorname{Tr}(AB) = \langle A, B 
+ B^*\rangle -i \langle A, iB - iB^* \rangle = 0.\]

Next, note that, using equation \eqref{eq:fixVd},
\[ \FA_{2d-2} = \FA_{d-1}^*\FA_{d-1} = I_{d-1}^*I_{d-1} + I_{d-1}^*V_{d-1} + V_{d-1}^*I_{d-1} + V_{d-1}^*V_{d-1}.\]
Therefore each $p \in \FA_{2d-1}$ can be expressed as $p = \iota + q^*Bq$, where $\iota \in I_{2d-2} + I_{2d-2}^*$
and $B \in M_k(F)$. Define $L$ on $\FA_{2d-2}$ to be 
$$L(p)=  L(\iota + q^*Bq) = \operatorname{Tr}(AB).$$
In particular,  $L((I+I^*)_{2d-2}) = \{0\}$.
If $p$ can also be expressed as $p = \tilde{\iota} + q^*\tilde{B}q$, with $\tilde{\iota} \in I_{2d-2} + I_{2d-2}^*$
and $\tilde{B} \in M_k(F)$, then $\tilde{\iota} - \iota = q^*(B - \tilde{B})q \in I + I^*$.
By the previous paragraph, $\operatorname{Tr}(A(B - \tilde{B})) = 0$,
which implies that $L$ is well-defined.
 Also, we see
\[ L([\iota + q^*Bq]^*) = \operatorname{Tr}(AB^*) = \operatorname{Tr}(AB)^* = L(\iota + q^*Bq)^*.\]
Finally, if $a \in \FA_{d-1} \setminus I_{d-1}$, then
an application of equation \eqref{eq:fixVd}
 shows $a = a_I + \alpha^* q$, for some
 $a_I \in I_{d-1}$, and $0 \neq \alpha \in F^k$.
Since $A \succ 0$, 
\[ L(a^*a) = L(q^*\alpha\alpha^*q) = \operatorname{Tr}(A\alpha\alpha^*) = \alpha^*A\alpha > 0.\]

Next we extend $L$ inductively by degree.
Suppose that $L$ is defined on $\FA_{2D-2}$,
$D \geq d$, and it satisfies properties (i)-(iii) with $d$ replaced by $D$.
We set about to extend $L$ to $\FA_{2D}$.
The extension will satisfy properties (i)-(iii) with $d$ replaced by $D+1$.

First we address degree $2D - 1$.
Write the disjoint decomposition of the space where
we must define our extended $L$ as
\[
\FA_{2D-1}^H = (I + I^*)_{2D-1}^{\ell} \oplus W_{2D-1}^H.
\]
for some subspace $W_{2D-1}^H$.
We define $L$ to be 0 on $W_{2D-1}^H$
and turn to defining $L$ on  $(I + I^*)_{2D-1}^{\ell}$
so as to meet the key constraint 
$L((I + I^*)_{2D-1}) = \{0\}$.

Let $p'$ be in  $(I+I^*)^{\ell}_{2D-1}$,
and let $p \in (I + I^*)_{2D-1}$ be such that
$p'$ is the leading polynomial of $p$.
Define $L(p')$ to be $L(p'-p)$.
To prove that $L(p')$ is well-defined suppose that $p'$ is also the leading polynomial of
some $\tilde{p} \in (I + I^*)_{2D-1}$. The polynomial $p-\tilde{p}$ clearly belongs to
$(I + I^*)_{2D-2}$, hence $L(p-\tilde{p})=0$ by assumption. It follows that $L(p'-p)=L(p'-\tilde{p})$.
The definition of $L(p')$ implies that $L(p)=L(p')+L(p-p')=0$ for every $p \in (I + I^*)_{2D-1}$.
Also note that
\[ L[(p')^*] = L[(p')^* - p^*] = L[p' - p]^* = L[p']^*.\]

Next we extend $L$ to degree $2D$.
As in the degree $2D-1$ case, $L$ can be 
extended to $(I + I^*)_{2D}^{\ell}$ to make $L((I + I^*)_{2D}) = \{0\}$.

By Lemma \ref{lem:IplusIstarLeading},
\[\FA_{2D}^H = (I + I^*)_{2D}^{\ell} \oplus W_{2D}^H\]
where
\[W_{2D}^H:= (V_d^{H})^* \FA_{2(D-d)}^H V_d^{H}\]
It follows from Lemma \ref{lem:decomp} that
\[ \FA_D^H = I_D^{\ell} \oplus V_D^H.\]
Note $V_D^H = \FA_{D-d}^H V_d^H$.
Let 
$r_1, \ldots, r_k$ be a basis for
$V_{D}^H$.
By Lemma \ref{lem:basis4W2DH}, the
set of products $r_i^*r_j$
is a basis for $W_{2D}^H$.
For these basis elements,
define $L$ to be
$L(r_i^*r_i) = c$,
 where $c > 0$ is yet to be determined,
and $L(r_i^*r_j) = 0$ for $i \neq j$.
Clearly, $L(a^\ast)=L(a)^\ast$ for every
$a \in \FA_{2D}$.

By Proposition \ref{prop:lowdeg},
\[\FA_D = I_D \oplus V_D \quad \text{and} \quad V_D = V_D^H \oplus V_{D-1}.\]
Let $r_{k+1}, \ldots, r_n$ be a basis for $V_{D-1}$ so that
$r_1, \ldots, r_n$ is a basis for $V_D$.  Let $r = (r_1, \ldots, r_k)$
and $\bar{r} = (r_{k+1}, \ldots, r_n)$.
If $a \in \FA_D \setminus I_D$, then $a$ is of the form
$a = \iota + \alpha^* r + \bar{\alpha}^* \bar{r}$ for some $\iota \in I$, $\alpha \in F^k$,
$\bar{\alpha} \in F^{n-k}$, and at least one of $\alpha$ and  $\bar{\alpha}$ is nonzero.
We see
\[
L(a^* a)=\left[\begin{array}{cc} \alpha^* &\bar{\alpha}^* \end{array} \right]
\left[\begin{array}{cc} c I_k & R \\ R^* & S \end{array} \right]
\left[\begin{array}{c} \alpha \\ \bar{\alpha} \end{array} \right],
\]
where
the $ij^{th}$ entry of $S$ is $L(r_{k+i}^*r_{k+j})$ and the $ij^{th}$ entry
of $R$ is $L(r_{k+i}^*r_{j})$.
Therefore $L(a^*a) > 0$ for all $a \in \FA_D \setminus I_D$ 
if and only if the matrix $\left[\begin{array}{cc} c I_k & R \\ R^* & S \end{array} \right]$
is positive definite.
Note that if $\bar{\alpha} \neq 0$, then from an induction hypothesis,
\[
\bar{\alpha}^* S \bar{\alpha} = L((\bar{\alpha}^*\bar{r})^*(\bar{\alpha}^* \bar{r}) ) > 0
\]
since $\bar{\alpha}^* \bar{r} \in \FA_{D-1} \setminus I_{D-1}$.
Therefore $S \succ 0$.
The last step in defining $L$ therefore is to 
pick $c$ sufficiently large such that the matrix 
$\left[\begin{array}{cc} c I_k & R \\ R^* & S \end{array} \right]$ 
is positive definite.
\end{proof}

\subsection{Relation between $\sqrt[\Pi]{I}$ and $\sqrt[\cR]{I}$}

We will show that $\sqrt[\Pi]{I}=\sqrt[\cR]{I}$
for finitely generated left ideals $I$ in $\FNSx.$ 

\begin{prop}
\label{prop:finite}
If $p_1, \ldots, p_k\in \FNSx$
and $I=\sum_{i=1}^k \FNSx p_i,$ then
\[
\sqrt[\Pi]{I}=\sqrt[\cR]{I}
\]
In particular, suppose $q \in \FNSx$ is such that for each $\Pi$-point
$(X^\prime,v^\prime)$ such that
\[p_1(X^\prime)[v^\prime] = p_2(X^\prime)[v^\prime] = \ldots = p_k(X^\prime)[v^\prime] = 0 \]
that $q(X^\prime)[v^\prime] = 0$. Then for each $\cR$-point
 $(X,v)$ such that
\[p_1(X)[v] = p_2(X)[v] = \ldots = p_k(X)[v] = 0,\]
then $q(X)[v] = 0$ also.
\end{prop}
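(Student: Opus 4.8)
The plan is to reduce the statement $\sqrt[\Pi]{I}=\sqrt[\cR]{I}$ to the two ingredients already assembled: the Real Algorithm of \S\ref{sec:alg}, which takes a finitely generated left ideal $I$ and produces the finitely generated left ideal $\rr{I}$ in finitely many steps, and Theorem \ref{thm:existsLfinite}, which realizes any finitely generated real left ideal as $\{a \mid L(a^\ast a)=0\}$ for a positive hermitian functional $L$. We already know from Lemmas \ref{lem:cradicals} and \ref{lem:qradicals} that $\rr{I}\subseteq \sqrt[\cR]{I}\subseteq \sqrt[\Pi]{I}$, so it suffices to prove the reverse inclusion $\sqrt[\Pi]{I}\subseteq \rr{I}$.

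First I would replace $I$ by $\rr{I}$: by Theorem \ref{thm:algorStops} the ideal $\rr{I}$ is again finitely generated, it is real, and $V_\Pi(I)=V_\Pi(\rr{I})$, $V_\cR(I)=V_\cR(\rr{I})$ since adding to $I$ elements of $\sqrt[\cR]{I}$ (indeed of $\rr{I}$) does not change the zero set — here one uses that each $q_i$ produced by the algorithm satisfies $\pi(q_i)v=0$ whenever $\pi\bigl(\sum q_j^\ast q_j\bigr)v=0$ for a $\ast$-representation $\pi$, which is exactly the computation in the proof of Lemma \ref{lem:qradicals}. So it is enough to show: if $I$ is a finitely generated \emph{real} left ideal, then $\sqrt[\Pi]{I}=I$, i.e. $\J(V_\Pi(I))\subseteq I$. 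Apply Theorem \ref{thm:existsLfinite} to get a positive hermitian functional $L$ with $I=\{a\mid L(a^\ast a)=0\}$. Now run the GNS construction on $L$: let $N_L=\{a\mid L(a^\ast a)=0\}=I$, form the pre-Hilbert space $H=\FA/I$ with inner product $\langle a+I,b+I\rangle=L(b^\ast a)$ (this uses $L(a^\ast)=L(a)^\ast$ and positivity, plus a Cauchy--Schwarz argument to see $N_L$ is a left ideal and the form descends), and let $\pi$ be the left-regular representation $\pi(a)(b+I)=ab+I$. Take the cyclic vector $v=1+I$.

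The key point is then that $\pi$ lies in the class $\Pi$ of finite-dimensional $\ast$-representations. This is where the finite generation of $I$ is essential and is, I expect, the main obstacle: $H=\FA/I$ is finite-dimensional precisely because $I$ is finitely generated, by the degree/complement bookkeeping of \S\ref{sec:prepthy}. Concretely, by Lemma \ref{lem:Adl} (and Proposition \ref{prop:lowdeg}) one has $\FA=I\oplus V$ with $V=\FA V_d^H\oplus V_{d-1}$; but $I$ real means, via Theorem \ref{thm:existsLfinite}, that $L(a^\ast a)>0$ on $V_{d-1}\setminus\{0\}$ \emph{and} that the contributions of $\FA V_d^H$ collapse — more carefully, one should check directly that $I\supseteq \FA V_d^H$ is forced for a real finitely generated ideal, so that $\FA/I\cong V_{d-1}$ is finite-dimensional. (This is plausible since any $w\in V_d^H$ gives $w^\ast w\in (V_d^H)^\ast V_d^H$, and one must argue such sums of squares landing in $I+I^\ast$ force $w\in I$; this may require a short supplementary lemma, and is the step I would budget the most care for.) Granting finite-dimensionality, $\pi\in\Pi$, and $(\pi,v)\in V_\Pi(I)$ because $\pi(a)v=a+I=0$ for $a\in I$. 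Finally, if $q\in\J(V_\Pi(I))$ then in particular $\pi(q)v=0$, i.e. $q+I=0$, so $q\in I$. This gives $\sqrt[\Pi]{I}\subseteq I$, completing the chain of equalities; the ``in particular'' clause about $\Pi$-points versus $\cR$-points is just the restatement of $\J(V_\Pi(I))\subseteq\J(V_\cR(I))$ which follows from $V_\cR(I)\supseteq V_\Pi(I)$ together with the equality $\sqrt[\Pi]{I}=\sqrt[\cR]{I}$ just proved.
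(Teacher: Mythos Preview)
Your proposal has a genuine gap: the GNS space $\FA/I$ is essentially never finite-dimensional, so the representation you build is an $\cR$-point but not a $\Pi$-point, and the argument collapses at exactly the step you flagged as ``the main obstacle.''  Concretely, take $I=(0)$; this is a finitely generated real left ideal, yet $\FA/(0)=\FA$ is infinite-dimensional.  Less trivially, the paper's own worked example in \S\ref{sec:alg} produces the real ideal $\rr{I}=\FA x_1+\FA x_3^\ast x_2^\ast+\FA x_4$, and the quotient is again infinite-dimensional (the images of $1,x_2,x_2^2,\dots$ are independent).  Your attempted patch, that ``$I\supseteq \FA V_d^H$ is forced for a real finitely generated ideal,'' contradicts the direct-sum decomposition $\FA=I\oplus\FA V_d^H\oplus V_{d-1}$ of Proposition \ref{prop:lowdeg}: if $\FA V_d^H\subseteq I$ then $\FA V_d^H=0$, i.e.\ $I_d^\ell=\FA_d^H$, which fails in both examples above.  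In short, the route through Theorem \ref{thm:existsLfinite} and GNS can only deliver an $\cR$-point; that is precisely how the paper proves Theorem \ref{thm:main}, and Proposition \ref{prop:finite} is the separate ingredient needed to pass from $\cR$ to $\Pi$.

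The paper's proof is entirely different and much more elementary: it is a direct finite-dimensional compression.  Given $q\in\sqrt[\Pi]{I}$ and an arbitrary $\cR$-point $(X,v)$ in $V_\cR(I)$, set $d=\max\{\deg p_1,\dots,\deg p_k,\deg q\}$, let $V=\{p(X)v:\deg p\le d\}$ (finite-dimensional because $\FA_d$ is), and put $X'_j=P_V X_j P_V$.  An easy induction on word length shows $r(X')v=r(X)v$ for every $r$ with $\deg r\le d$; hence $p_i(X')v=p_i(X)v=0$, so $(X',v)\in V_\Pi(I)$, whence $q(X)v=q(X')v=0$.  No use of the Real Algorithm, the real radical, or Theorem \ref{thm:existsLfinite} is needed here, and indeed Proposition \ref{prop:finite} must be proved independently of those since it is one of the inputs to Theorem \ref{thm:main}.
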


Recall that $\Pi$-points are, loosely speaking, finite-dimensional representations and
$\cR$-points include infinite-dimensional representations.

\begin{proof}Suppose $q \in \FNSx$, and let $d = \max\{\deg(p_1), \ldots, \deg(p_k), q\}$.
Let $(X,v)$ a representation on some pre-Hilbert space $\mathcal{H}$.
Define $V$ to be the space
\[V = \{p(X)[v]:\ \deg(p) \leq d\} \subset \mathcal{H}. \]
Since the space of polynomials with degree less than or equal to $d$ is finite dimensional, it follows that $V$ is also finite dimensional.
Define $X': V^g \rightarrow V$ by
\[X' = (P_V X_1P_V, \ldots, P_V X_gP_V). \]
Note that $(P_V X_j P_V)^* = P_V X_j^* P_V$.
We claim that for each $r \in \FNSx$ of degree at most $d$,
\begin{equation}
\label{eq:claimconl}
r(X')[v] = r(X)[v].
\end{equation}
Proceed by induction on $\deg(r)$.
If $r$ is a constant, then $r(X')[v] = rv = r(X)[v]$.
Next, consider the case where $r$ is monomial of degree $j \leq d$.
Let $r$ be expressed as
\[r = ym \]
where $y$ is a variable, i.e. $\deg(y) = 1$, and where $m$ is a monomial of degree $j - 1$.
Assume inductively that $m(X')[v] = m(X)[v]$.
Note that $m(X)[v] \in V$ since $\deg(m') \leq d$.
Therefore
\begin{align}
\notag r(X')[v] = y(X')m(X')[v]= P_Vy(X)P_V m(X')[v]=\\
\notag = P_Vy(X)P_V m(X)[v]=P_Vy(X)m(X)[v] = P_Vr(X)[v],
\end{align}
where $y(X)$ denotes evaluating the polynomial $y$ at the $g$-tuple $X$.\index{yX@$y(X)$}
Since $\deg(r) \leq d$, by definition $r(X)[v] \in V$, so $r(X')[v] = r(X)[v]$.
By induction and by linearity, this implies that for any $r \in \FNSx$ with $\deg(r) \leq d$, equation (\ref{eq:claimconl}) holds.

Suppose $q \in \sqrt[\Pi]{I}$. If
\[p_1(X)[v] = p_2(X)[v] = \ldots = p_k(X)[v] = 0,\]
then
\[p_1(X')[v] = p_2(X')[v] = \ldots = p_k(X')[v] = 0.\]
Since $(X',v)$ is a finite-dimensional representation, this implies that
\[q(X)[v] = q(X')[v] = 0.\]
Therefore, $q \in \sqrt[\cR]{I}$.
\end{proof}

\subsection{Proof of Theorem \ref{thm:main}}

\begin{proof}
Let $I$ be a finitely generated left ideal in $\FA=\FNSx$. Then
$$
\sqrt[\mathcal{R}]{I} = \sqrt[\Pi]{I}
$$
by Proposition \ref{prop:finite}. By Theorem \ref{thm:algorStops}, the real left ideal 
$$
J:=  \rr{I}
$$ 
is finitely generated. Then, by Theorem \ref{thm:existsLfinite}, 
there exists a positive hermitian $F$-linear functional $L$ on $\FA$ such that
$$
J=\{a \in \FA¸\mid L(a^\ast a)=0\}.
$$
By the GNS construction,  there exists an $\cR$-point $(\pi,v)$ 
such that $L(a)=\langle \pi(a)v,v \rangle$ for every $a \in \FA$.
(Recall that $V_\pi=\FA/J$ considered as a vector space over $F$ with inner product $\langle p+J,q+J \rangle
=L(q^\ast p)$, $\pi$ is the left regular representation of $\FA$ on $V_\pi$ and $v=1+J$.)
It follows that 
$$J=\J(\{(\pi,v)\}).$$
By the last claim of Lemma \ref{lem:cradicals}, 
$$
\sqrt[\mathcal{R}]{J}=J.
$$
Hence, $\sqrt[\cR]{I}\subseteq J$. By Lemma \ref{lem:qradicals}, also $J \subseteq \sqrt[\cR]{I}$.
\end{proof}

\section{Characterizations of $\sqrt[\cR]{I}$ and $\rr{I}$ in general $\ast$-algebras}
  \label{sec:sqrtRI}
  
The main result of this section is Proposition  \ref{prop:Jaka-procedure} which gives
an iterative procedure for computing $\rr{I}$ in general $\ast$-algebras. We also
discuss the relation of this result to the Real Algorithm.

\subsection{A Topological Characterization of $\sqrt[\cR]{I}$}

Let $\chrisA$ be a $\ast$-algebra. Write $\Sigma_\chrisA$ for the set of all finite sums of elements $a^\ast a$, $a \in \chrisA$.
We assume that $\chrisA_h$ is equipped with the finest locally convex topology, i.e.,
the finest vector space topology 
 whose every neighborhood of zero contains a 
convex balanced absorbing set.
Equivalently, it is
 the coarsest topology 
 for which  every seminorm on $\chrisA_h$ is continuous.
In this case, every linear functional $f$ on $\chrisA_h$ is continuous
since $\vert f \vert$ is a seminorm.

Suppose that $C$ is a convex cone on $\chrisA_h$.
Write $C^\vee$ for the set of all linear functionals $f$
on $\chrisA_h$ such that $f(C) \ge 0$ and write
$C^{\vee \vee}$ for the set of all $v \in \chrisA_h$ such that
$f(v) \ge 0$ for every $f \in C^\vee$.
By the Separation Theorem for convex sets \cite[II.39,
Corollary 5]{d}, $C^{\vee \vee}=\overline{C}$. It follows that for every elements
$a,b \in \chrisA_h$ such that $a+\eps b \in C$ for every real $\eps>0$, we have that
$a \in \overline{C}$.

Note that every $\Sigma_\chrisA$-positive linear functional $f$ on the real vector space $\chrisA_h$ 
extends uniquely to a positive hermitian $F$-linear functional on the $\ast$-algebra $\chrisA$
(namely, take $\tilde{f}(a)=\frac12 f(a+a^\ast)$ if $F=\RR$ and $\tilde{f}(a)=\frac12\left(f(a+a^\ast)-if(ia-ia^\ast)\right)$ if $F=\CC$), 
hence by the GNS construction, see e.g. \cite[Section 8.6]{sch}, there exists a $\ast$-representation $\pi$ of $\chrisA$ and $v \in V_\pi$ such that
$f(a)=\langle \pi(a)v,v \rangle$ for every $a \in \chrisA_h$.

\begin{thm}
\label{nsatz}
Let $I$ be a left ideal in $\ast$-algebra $\chrisA$ and let $\Sigma_I$ be the set of all finite sums
of elements $u^\ast u$ where $u \in I$. Then
\[
\sqrt[\cR]{I}=\{a \in \chrisA \mid -a^\ast a \in \overline{\Sigma_\chrisA-\Sigma_I}\} 
\]
\end{thm}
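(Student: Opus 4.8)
The statement to prove is the topological Nullstellensatz
\[
\sqrt[\cR]{I}=\{a \in \chrisA \mid -a^\ast a \in \overline{\Sigma_\chrisA-\Sigma_I}\},
\]
so the plan is to prove the two inclusions separately, using the Separation Theorem machinery already set up in the excerpt (the identity $C^{\vee\vee}=\overline C$ and the GNS observation that $\Sigma_\chrisA$-positive functionals come from $\ast$-representations). Write $C:=\Sigma_\chrisA-\Sigma_I$; it is a convex cone in $\chrisA_h$ once one checks $-a^\ast a = \tfrac12\big((-a^\ast a)+(-a^\ast a)^\ast\big)$ lands in $\chrisA_h$, which it does. Call the right-hand side $R$.

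\textbf{Inclusion $R\subseteq\sqrt[\cR]{I}$.} Suppose $-a^\ast a\in\overline C$. Recall $\sqrt[\cR]{I}=\J(V_\cR(I))=\bigcap_{(\pi,v)\colon I\subseteq\J(\{(\pi,v)\})}\J(\{(\pi,v)\})$; equivalently (via the GNS remark just before the theorem), $\sqrt[\cR]{I}$ is the set of $a$ with $L(a^\ast a)=0$ for every $\Sigma_\chrisA$-positive hermitian functional $L$ vanishing on $\Sigma_I$ — more precisely vanishing on $I+I^\ast$, hence on all of $\Sigma_I$. So fix such an $L$: $L\ge 0$ on $\Sigma_\chrisA$ and $L(\Sigma_I)=0$, which forces $L(C)=L(\Sigma_\chrisA)-L(\Sigma_I)=L(\Sigma_\chrisA)\ge 0$, i.e. $L\in C^\vee$. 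Since every linear functional on $\chrisA_h$ is continuous in the finest locally convex topology, $L\ge 0$ on $\overline C=C^{\vee\vee}$, hence $L(-a^\ast a)\ge 0$, i.e. $L(a^\ast a)\le 0$; combined with $L(a^\ast a)\ge 0$ this gives $L(a^\ast a)=0$. As this holds for all such $L$, we get $a\in\sqrt[\cR]{I}$. (One must check the claim that $\sqrt[\cR]{I}$ is exactly the common zero set of these functionals: $\supseteq$ is the GNS construction; $\subseteq$ is immediate since each $(\pi,v)$ with $I\subseteq\J(\{(\pi,v)\})$ gives the functional $L(a)=\langle\pi(a)v,v\rangle$, which is $\Sigma_\chrisA$-positive and kills $I$, so also $I^\ast$ and $\Sigma_I$.)

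\textbf{Inclusion $\sqrt[\cR]{I}\subseteq R$.} This is the harder direction and is where the Separation Theorem is used in earnest. Suppose $a\notin R$, i.e. $-a^\ast a\notin\overline C$. Since $\overline C=C^{\vee\vee}$, there is a linear functional $L\in C^\vee$ with $L(-a^\ast a)<0$, i.e. $L(a^\ast a)>0$. Now $L(C)\ge 0$ means $L(\Sigma_\chrisA)\ge 0$ and $L(-\Sigma_I)\ge 0$; combined with $L(\Sigma_\chrisA)\ge 0$ applied to $u^\ast u$ for $u\in I$, this forces $L(\Sigma_I)=0$, so in particular $L(u^\ast u)=0$ for all $u\in I$. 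Replacing $L$ by its hermitian part we may assume $L$ is hermitian, and then extend it to a positive hermitian $F$-linear functional $\tilde L$ on $\chrisA$ as in the paragraph preceding the theorem. By GNS there is a $\ast$-representation $(\pi,v)$ with $\tilde L(b)=\langle\pi(b)v,v\rangle$. For $u\in I$ we have $\|\pi(u)v\|^2=\tilde L(u^\ast u)=0$, so $\pi(u)v=0$, i.e. $(\pi,v)\in V_\cR(I)$. But $\|\pi(a)v\|^2=\tilde L(a^\ast a)=L(a^\ast a)>0$, so $\pi(a)v\neq 0$, whence $a\notin\J(V_\cR(I))=\sqrt[\cR]{I}$. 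Contrapositively, $\sqrt[\cR]{I}\subseteq R$.

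\textbf{Main obstacle.} The routine parts are the cone check and the two applications of GNS; the one place to be careful is the passage from a real-linear functional $L$ that is $\Sigma_\chrisA$-positive and hermitian to an honest positive hermitian $F$-linear functional whose GNS data $(\pi,v)$ genuinely satisfies $\pi(u)v=0$ for $u\in I$ (not merely $\langle\pi(u)v,v\rangle=0$) — this is exactly the step $L(u^\ast u)=0\Rightarrow\|\pi(u)v\|=0$, which is fine, but it relies on having arranged $L(\Sigma_I)=0$ from $L\in C^\vee$, and on the subtlety that membership in $C^\vee$ gives nonnegativity on the cone $C=\Sigma_\chrisA-\Sigma_I$ rather than separate control of the two pieces. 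Making the "$L(\Sigma_I)=0$" deduction airtight (using that $C$ contains both $+\Sigma_\chrisA$-type and $-\Sigma_I$-type elements, and that $0\in\Sigma_\chrisA$) is the crux; everything else is bookkeeping with the already-established identity $C^{\vee\vee}=\overline C$.
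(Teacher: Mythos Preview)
Your proof is correct and uses the same ingredients as the paper's own argument: the bipolar identity $C^{\vee\vee}=\overline C$ together with the GNS correspondence between $\Sigma_\chrisA$-positive functionals and $\cR$-points. The only difference is presentational: the paper packages the whole thing as a single chain of equivalences
\[
a\in\sqrt[\cR]{I}\ \Longleftrightarrow\ \langle\pi(-a^\ast a)v,v\rangle\ge 0\ \text{for all }(\pi,v)\text{ with }\langle\pi(-x^\ast x)v,v\rangle\ge 0\ \forall x\in I\ \Longleftrightarrow\ -a^\ast a\in C^{\vee\vee},
\]
whereas you split into two inclusions and spell out the intermediate characterization of $\sqrt[\cR]{I}$ via functionals. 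Your ``main obstacle'' (deducing $L(\Sigma_I)=0$ from $L\in C^\vee$) is not really one: since $0\in\Sigma_\chrisA$ and $0\in\Sigma_I$, membership in $C^\vee$ immediately gives $L(\Sigma_\chrisA)\ge 0$ and $L(\Sigma_I)\le 0$, and $\Sigma_I\subseteq\Sigma_\chrisA$ finishes it. One minor slip: the phrase ``replacing $L$ by its hermitian part'' is out of place, since $L$ is already a real-linear functional on the real vector space $\chrisA_h$; what you mean (and what the paper's setup provides) is the hermitian $F$-linear extension $\tilde L$ to all of $\chrisA$.
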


\begin{proof}
Pick $a \in \chrisA$ and recall that $a \in \sqrt[\cR]{I}$ if and only if
$\pi(a)v=0$ for every $\cR$-point $(\pi,v)$ such that $\pi(x)v=0$
for every $x \in I$. Clearly, the latter is true if and only if
$\langle \pi(-a^\ast a)v,v \rangle \ge 0$ for every $\cR$-point $(\pi,v)$ such that
$\langle \pi(-x^\ast x)v,v \rangle \ge 0$ for every $x \in I$. By the GNS construction,
this is equivalent to $f(-a^\ast a) \ge 0$ for every linear functional $f$ on $\chrisA_h$
such that $f(\Sigma_\chrisA) \ge 0$ and $f(-x^\ast x) \ge 0$ for every $x \in I$ or, in other words,
to $-a^\ast a \in (\Sigma_\chrisA-\Sigma_I)^{\vee \vee}=\overline{\Sigma_\chrisA-\Sigma_I}$.
\end{proof}

Further  characterizations of $\sqrt[\cR]{I}$ can be obtained by combining Theorem \ref{nsatz}
with Proposition \ref{nsatz-prop}.

\begin{prop}
\label{nsatz-prop}
Let $\chrisA$ be as above and let $I$ be a left ideal of $\chrisA$ generated by
the set $\{p_\lambda\}_{\lambda \in \Lambda}$. Write $S$ for the set
$\{p_\lambda^\ast p_\lambda\}_{\lambda \in \Lambda}$. Then
\[
\Sigma_\chrisA-\cone(S) \subseteq \Sigma_\chrisA-\Sigma_I
\subseteq \Sigma_\chrisA+(I \cap \chrisA_h) \subseteq (\Sigma_\chrisA+I+I^\ast) \cap \chrisA_h
\]
and
\[
\overline{\Sigma_\chrisA-\cone(S)} = \overline{\Sigma_\chrisA-\Sigma_I}=
\overline{\Sigma_\chrisA+(I \cap \chrisA_h)} = \overline{(\Sigma_\chrisA+I+I^\ast) \cap \chrisA_h}.
\]
\end{prop}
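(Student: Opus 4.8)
The plan is to verify the four-term inclusion chain directly, deduce three of the four closure identities by applying closure to that chain, and then close the circle by showing $(\Sigma_\chrisA+I+I^\ast)\cap\chrisA_h\subseteq\overline{\Sigma_\chrisA-\cone(S)}$.

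\emph{The inclusions.} These are essentially bookkeeping. For $\Sigma_\chrisA-\cone(S)\subseteq\Sigma_\chrisA-\Sigma_I$ it suffices that $\cone(S)\subseteq\Sigma_I$, which holds because $\mu\,p_\lambda^\ast p_\lambda=(\sqrt\mu\,p_\lambda)^\ast(\sqrt\mu\,p_\lambda)$ with $\sqrt\mu\,p_\lambda\in I$. For $\Sigma_\chrisA-\Sigma_I\subseteq\Sigma_\chrisA+(I\cap\chrisA_h)$ it suffices that $\Sigma_I\subseteq I\cap\chrisA_h$, which holds since for $u\in I$ the element $u^\ast u$ lies in the left ideal $I$ and is hermitian. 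The last inclusion is immediate from $\Sigma_\chrisA\subseteq\chrisA_h$ and $I\cap\chrisA_h\subseteq\chrisA_h$.

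\emph{Reducing the closures.} Applying $\overline{\,\cdot\,}$ to the chain gives $\overline{\Sigma_\chrisA-\cone(S)}\subseteq\overline{\Sigma_\chrisA-\Sigma_I}\subseteq\overline{\Sigma_\chrisA+(I\cap\chrisA_h)}\subseteq\overline{(\Sigma_\chrisA+I+I^\ast)\cap\chrisA_h}$, so it remains to prove $(\Sigma_\chrisA+I+I^\ast)\cap\chrisA_h\subseteq\overline{\Sigma_\chrisA-\cone(S)}$; one more application of closure, together with $\overline{\overline{C}}=\overline{C}$, then forces all four closures to coincide. Now $\Sigma_\chrisA-\cone(S)$ is a convex cone (a sum of the convex cones $\Sigma_\chrisA$ and $-\cone(S)$), so its closure is a closed convex cone and in particular closed under addition. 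Since $0\in\cone(S)$ we have $\Sigma_\chrisA\subseteq\overline{\Sigma_\chrisA-\cone(S)}$, and a hermitian element $v=\sigma+b+c^\ast$ (with $\sigma\in\Sigma_\chrisA$, $b,c\in I$) equals $\sigma+(v-\sigma)$ with $v-\sigma\in(I+I^\ast)\cap\chrisA_h$; hence it is enough to show $(I+I^\ast)\cap\chrisA_h\subseteq\overline{\Sigma_\chrisA-\cone(S)}$.

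\emph{The key step.} Given $w\in(I+I^\ast)\cap\chrisA_h$, write $w=e+e^\ast$ with $e=\tfrac12(b+c)\in I$, and expand $e=\sum_{i=1}^m a_i p_{\lambda_i}$, a finite sum since $I=\sum_\lambda\chrisA p_\lambda$. Put $b_w:=\sum_{i=1}^m a_i a_i^\ast\in\Sigma_\chrisA\subseteq\chrisA_h$. For any real $\eps=t^2>0$, the identity
\[
\bigl(ta_i^\ast+t^{-1}p_{\lambda_i}\bigr)^\ast\bigl(ta_i^\ast+t^{-1}p_{\lambda_i}\bigr)=t^2 a_ia_i^\ast+a_ip_{\lambda_i}+p_{\lambda_i}^\ast a_i^\ast+t^{-2}p_{\lambda_i}^\ast p_{\lambda_i}
\]
gives $w+\eps b_w=\sum_i\bigl(ta_i^\ast+t^{-1}p_{\lambda_i}\bigr)^\ast\bigl(ta_i^\ast+t^{-1}p_{\lambda_i}\bigr)-t^{-2}\sum_i p_{\lambda_i}^\ast p_{\lambda_i}\in\Sigma_\chrisA-\cone(S)$. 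By the consequence of the Separation Theorem recorded just before the statement (if $w+\eps b_w\in C$ for all $\eps>0$ and $C$ is a convex cone, then $w\in\overline C$), we conclude $w\in\overline{\Sigma_\chrisA-\cone(S)}$, which completes the proof. I do not expect a deep obstacle here: the one genuine idea is the $\eps$-perturbation trick, adding $\eps\sum_i a_i a_i^\ast$ so that the a priori indefinite cross terms $a_ip_{\lambda_i}+p_{\lambda_i}^\ast a_i^\ast$ turn into a bona fide sum of squares minus an element of $\cone(S)$; the rest is keeping hermitian-ness straight, using the finite representation of elements of $I$, and remembering that one reaches $\overline{\Sigma_\chrisA-\cone(S)}$ through the separation corollary rather than by exact membership.
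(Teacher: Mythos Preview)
Your proof is correct and follows essentially the same route as the paper's. Both arguments reduce the closure equalities to the single inclusion $(\Sigma_\chrisA+I+I^\ast)\cap\chrisA_h\subseteq\overline{\Sigma_\chrisA-\cone(S)}$ and establish it via the identical $\eps$-perturbation trick: writing the $I$-part as $\sum a_i p_{\lambda_i}$ and completing the square so that the cross terms $a_ip_{\lambda_i}+p_{\lambda_i}^\ast a_i^\ast$ become a sum of hermitian squares minus $\tfrac{1}{\eps}\sum p_{\lambda_i}^\ast p_{\lambda_i}$. The only cosmetic difference is that you first peel off $\sigma\in\Sigma_\chrisA$ and reduce to $(I+I^\ast)\cap\chrisA_h$, invoking that $\overline{\Sigma_\chrisA-\cone(S)}$ is a convex cone to reabsorb $\sigma$; the paper keeps $s\in\Sigma_\chrisA$ inside the computation and handles $x=s+w+w^\ast$ in one stroke.
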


\begin{proof}
Clearly,
$
\cone(S) \subseteq \Sigma_I
\subseteq I \cap \chrisA_h \subseteq (I+I^\ast) \cap \chrisA_h,
$
which implies the claimed inclusions.
To prove the equalities, it suffices to
show that $(\Sigma_\chrisA+I+I^\ast)
\cap \chrisA_h \subseteq \overline{\Sigma_\chrisA-\cone(S)}$.
Take any $x \in (\Sigma_\chrisA+I+I^\ast) \cap \chrisA_h$
 and pick $s \in \Sigma_\chrisA$, $u,v \in I$
such that $x=s+u+v^\ast$.
It follows that
$$x=\frac12(x+x^\ast)=s+\frac12(u+v)+\frac12(u+v)^\ast
=s+w+w^\ast$$
where $w=\frac12(u+v) \in I
.$
By the definition of generators, there exists a finite
subset $M$ of $\Lambda$ and elements $q_\mu \in \chrisA$, $\mu \in M$, such that
$w=\sum_{\mu \in M} q_\mu p_\mu$.
For every $\eps>0$, we have that
$$x+\eps \sum q_\mu q_\mu^\ast=s+\sum_{\mu \in M} q_\mu p_\mu
+\sum_{\mu \in M} p_\mu^\ast q_\mu^\ast +\eps \sum q_\mu q_\mu^\ast
$$
$$
=s+\frac{1}{\eps} \sum_{\mu \in M} (p_\mu+ \eps q_\mu^\ast)^\ast(p_\mu
+ \eps q_\mu^\ast)
-\frac{1}{\eps} \sum_{\mu \in M} p_\mu^\ast p_\mu \in \Sigma-\cone(S)
.$$
It follows that $x \in \overline{\Sigma_\chrisA-\cone(S)}$.
\end{proof}



   Corollary \ref{nsatz-cor2} bears some resemblance to  Theorem 7 in
   \cite{hmp}.   The closure in the finest locally convex topology,
   replaces the approximation and archimedean term appearing in that
   Theorem.

\begin{cor}
\label{nsatz-cor2}
For every left ideal $I$ of $\chrisA$
$$
\sqrt[\cR]{I}
=\{a \in \chrisA \mid -a^\ast a \in \overline{(\Sigma_\chrisA+I+I^\ast) \cap \chrisA_h}\}.
$$
\end{cor}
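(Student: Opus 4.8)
The plan is to derive the corollary directly by combining the two results just established, Theorem \ref{nsatz} and Proposition \ref{nsatz-prop}. Theorem \ref{nsatz} already identifies $\sqrt[\cR]{I}$ with the set of $a \in \chrisA$ for which $-a^\ast a$ lies in the closure $\overline{\Sigma_\chrisA-\Sigma_I}$, the closure being taken in the finest locally convex topology on $\chrisA_h$. So all that remains is to rewrite $\overline{\Sigma_\chrisA-\Sigma_I}$ in the form $\overline{(\Sigma_\chrisA+I+I^\ast)\cap\chrisA_h}$ appearing in the statement.

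First I would invoke the chain of equalities in Proposition \ref{nsatz-prop}, whose final assertion states precisely that $\overline{\Sigma_\chrisA-\Sigma_I}=\overline{(\Sigma_\chrisA+I+I^\ast)\cap\chrisA_h}$ (in fact both also equal $\overline{\Sigma_\chrisA-\cone(S)}$ for any generating set $\{p_\lambda\}$ of $I$, but that refinement is not needed here). Substituting this equality into the description of $\sqrt[\cR]{I}$ furnished by Theorem \ref{nsatz} produces the claimed formula verbatim. One should note only that the ambient index set over which $a$ ranges is the same ($\chrisA$) in both statements and that $-a^\ast a$ indeed lies in $\chrisA_h$, which is immediate from $(-a^\ast a)^\ast = -a^\ast a$.

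There is essentially no obstacle: the substance has already been carried out in the proofs of Theorem \ref{nsatz} (via the GNS construction and the bipolar identity $C^{\vee\vee}=\overline{C}$) and of Proposition \ref{nsatz-prop} (via the $\eps$-perturbation $x+\eps\sum q_\mu q_\mu^\ast \in \Sigma_\chrisA-\cone(S)$ together with the observation that $a+\eps b\in C$ for all real $\eps>0$ forces $a\in\overline{C}$). Thus the corollary is genuinely a corollary, requiring no new ideas beyond juxtaposing the two preceding statements.
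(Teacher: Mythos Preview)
Your proposal is correct and mirrors exactly what the paper intends: the corollary is stated without proof, immediately after the sentence ``Further characterizations of $\sqrt[\cR]{I}$ can be obtained by combining Theorem \ref{nsatz} with Proposition \ref{nsatz-prop},'' and your argument is precisely that combination. No additional work is needed.
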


Worth mentioning is also
\begin{cor}
\label{nsatz-cor1}
Suppose that $\{p_\lambda\}_{\lambda \in \Lambda}$ is a subset of $\chrisA$.
If $a \in \chrisA$ satisfies $\pi(a)v=0$ for every $\cR$-point $(\pi,v)$ of $\chrisA$ such
that $ \pi(p_\lambda)v= 0$ for all $\lambda \in \Lambda$, then
$-a^\ast a \in \overline{\Sigma_\chrisA-\cone(S)}$
where $S=\{p_\lambda^\ast p_\lambda\}_{\lambda \in \Lambda}$.
\end{cor}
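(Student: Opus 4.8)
The plan is to read the hypothesis as the assertion that $a$ lies in the $\cR$-saturation of the left ideal generated by the family $\{p_\lambda\}_{\lambda\in\Lambda}$, and then simply to chain together Theorem \ref{nsatz} and Proposition \ref{nsatz-prop}, which have already done all the analytic work. So this will be a short deduction rather than a self-contained argument.

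First I would set $I$ to be the left ideal of $\chrisA$ generated by $\{p_\lambda\}_{\lambda\in\Lambda}$. The condition imposed on $a$ says exactly that $a\in\J(V_{\cR}(\{p_\lambda\}_{\lambda\in\Lambda}))$. Invoking the second assertion of Lemma \ref{lem:cradicals} (applied to the class $\cC=\cR$), we have $V_{\cR}(\{p_\lambda\}_{\lambda\in\Lambda})=V_{\cR}(I)$, and therefore $a\in\J(V_{\cR}(I))=\sqrt[\cR]{I}$. Next, Theorem \ref{nsatz} identifies $\sqrt[\cR]{I}$ with $\{b\in\chrisA\mid -b^\ast b\in\overline{\Sigma_\chrisA-\Sigma_I}\}$, so $-a^\ast a\in\overline{\Sigma_\chrisA-\Sigma_I}$. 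Finally, Proposition \ref{nsatz-prop} gives the equality $\overline{\Sigma_\chrisA-\Sigma_I}=\overline{\Sigma_\chrisA-\cone(S)}$ for $S=\{p_\lambda^\ast p_\lambda\}_{\lambda\in\Lambda}$, and hence $-a^\ast a\in\overline{\Sigma_\chrisA-\cone(S)}$, which is the desired conclusion.

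There is essentially no obstacle here; the corollary is a formal consequence of results established earlier in the section. The only mild point to be careful about is that $\Lambda$ is allowed to be infinite, so one should not secretly assume finite generation; but both Theorem \ref{nsatz} and Proposition \ref{nsatz-prop} are stated for arbitrary left ideals, so nothing goes wrong. (One could also remark that the converse implication holds, since the steps above are reversible via the GNS construction, but that is not needed for the statement.)
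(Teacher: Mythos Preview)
Your argument is correct. In the paper this corollary is stated without proof, as an immediate consequence of Theorem \ref{nsatz} and Proposition \ref{nsatz-prop}; your chain ``$a\in\sqrt[\cR]{I}$ by Lemma \ref{lem:cradicals}, then $-a^\ast a\in\overline{\Sigma_\chrisA-\Sigma_I}$ by Theorem \ref{nsatz}, then $\overline{\Sigma_\chrisA-\Sigma_I}=\overline{\Sigma_\chrisA-\cone(S)}$ by Proposition \ref{nsatz-prop}'' is precisely the intended deduction.
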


\subsection{An Auxiliary ``Radical'' $\sqrt[\alpha]{I}$}

Corollary \ref{nsatz-cor2} suggests that for every left ideal $I$ of a $\ast$-algebra $\chrisA$,
the following set is relevant:
\[
\sqrt[\alpha]{I} := \{ a \in \chrisA \mid -a^\ast a \in \Sigma_\chrisA+I+I^\ast\}.
\]
Note that $\sqrt[\alpha]{I} \subseteq \rr{I}$ by the definition of a real ideal.

The remainder of this section is
devoted to a discussion of when $\sqrt[\alpha]{I}$ is an ideal.
%
The next example
shows that it need not be, 
even for a principal left ideal in a
free $\ast$-algebra.
 
 \begin{exa}
  \label{notqr}
  Let $I \subset \FA=\FNSx$ be the left ideal generated by the polynomial $x_1^*x_1$.
  Clearly, $x_1 \in \sqrt[\alpha]{I}$. We claim that $x_1^2 \not\in \sqrt[\alpha]{I}$.

  If $x_1^2 \in \sqrt[\alpha]{I}$, then $(x_1^2)^\ast x_1^2 +\sigma \in I+I^\ast$ for some $\sigma \in \Sigma_\FA$.
  By part (2) of Proposition \ref{prop:lowdeg}, we get
  $x_1^2\in I \oplus \FA_1$,
  which is not possible.
  \qed
 \end{exa}

If the set $(\Sigma_\chrisA+I+I^\ast) \cap \chrisA_h$ is closed, then 
$\sqrt[\alpha]{I} = \RradI$ by Corollary \ref{nsatz-cor2}. It follows that
the set $\sqrt[\alpha]{I}$ is a left ideal and $\RradI = \rr{I}$.

There exists
a large class of $\ast$-algebras in which $\sqrt[\alpha]{I}$ is always a left ideal.
We say that a $\ast$-algebra $\chrisA$ is \textit{centrally bounded} if for every $a \in \chrisA$, there exists
an element $c$ in the center of $\chrisA$ such that $c^\ast c - a^\ast a \in \Sigma_\chrisA$.

\begin{lemma}
\label{lem:cb}
If $I$ is a left ideal of an centrally bounded $\ast$-algebra $\chrisA$ then
the set $\sqrt[\alpha]{I}$ is also a left ideal of $\chrisA$.
\end{lemma}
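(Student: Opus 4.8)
The plan is to show directly that $\sqrt[\alpha]{I}$ is closed under left multiplication by arbitrary elements of $\chrisA$ and under addition, exploiting the central bound to ``absorb'' the multiplier into a sum of squares. Recall that $a \in \sqrt[\alpha]{I}$ means $-a^\ast a \in \Sigma_\chrisA + I + I^\ast$, i.e. there is $\sigma \in \Sigma_\chrisA$ and $w \in I$ with $a^\ast a + \sigma = -(w + w^\ast)$ (after the usual symmetrization replacing $u + v^\ast$ by $w + w^\ast$ with $w \in I$, as in the proof of Proposition \ref{nsatz-prop}).

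First I would handle left multiplication: suppose $a \in \sqrt[\alpha]{I}$ and $b \in \chrisA$; I want $ba \in \sqrt[\alpha]{I}$, i.e. $-a^\ast b^\ast b a \in \Sigma_\chrisA + I + I^\ast$. Since $\chrisA$ is centrally bounded, pick $c$ in the center with $c^\ast c - b^\ast b \in \Sigma_\chrisA$; write $c^\ast c - b^\ast b = \tau \in \Sigma_\chrisA$. Then
\[
a^\ast b^\ast b a = a^\ast (c^\ast c - \tau) a = c^\ast (a^\ast a) c - a^\ast \tau a,
\]
and $a^\ast \tau a \in \Sigma_\chrisA$ because $\tau$ is a sum of squares (conjugating a sum of squares by $a^\ast(\cdot)a$ keeps it in $\Sigma_\chrisA$). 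Now substitute $a^\ast a = -\sigma - w - w^\ast$:
\[
c^\ast (a^\ast a) c = -c^\ast \sigma c - c^\ast w c - c^\ast w^\ast c = -c^\ast\sigma c - (cwc)?
\]
Here is the key point where centrality is used: since $c$ is central, $c^\ast w c = w c^\ast c$ lies in $I$ (as $w \in I$ and $I$ is a left ideal and $c^\ast c$ commutes past), and likewise $c^\ast w^\ast c = (c^\ast w c)^\ast \in I^\ast$; also $c^\ast \sigma c \in \Sigma_\chrisA$. Hence $c^\ast(a^\ast a)c \in -\Sigma_\chrisA + I + I^\ast$, and combining, $-a^\ast b^\ast b a = c^\ast \sigma c + c^\ast w c + c^\ast w^\ast c + a^\ast \tau a \in \Sigma_\chrisA + I + I^\ast$, so $ba \in \sqrt[\alpha]{I}$.

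Next I would handle closure under addition, using a parallelogram-type trick: if $a_1, a_2 \in \sqrt[\alpha]{I}$ then $(a_1 + a_2)^\ast(a_1+a_2) + (a_1 - a_2)^\ast(a_1 - a_2) = 2a_1^\ast a_1 + 2 a_2^\ast a_2$, so
\[
(a_1+a_2)^\ast(a_1+a_2) = 2a_1^\ast a_1 + 2a_2^\ast a_2 - (a_1-a_2)^\ast(a_1-a_2) \le 2a_1^\ast a_1 + 2a_2^\ast a_2 \pmod{\Sigma_\chrisA},
\]
i.e. $-(a_1+a_2)^\ast(a_1+a_2) = (a_1-a_2)^\ast(a_1-a_2) + (-2a_1^\ast a_1) + (-2a_2^\ast a_2)$, and each $-2 a_i^\ast a_i \in \Sigma_\chrisA + I + I^\ast$ by hypothesis while $(a_1-a_2)^\ast(a_1-a_2) \in \Sigma_\chrisA$; hence $a_1 + a_2 \in \sqrt[\alpha]{I}$. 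Finally scalar multiplication is immediate since $(\lambda a)^\ast(\lambda a) = |\lambda|^2 a^\ast a$. The main obstacle is the left-multiplication step, and within it the crucial use of \emph{centrality}: it is exactly what lets me move $c^\ast c$ past the ideal element $w$ so that $c^\ast w c$ stays in $I$; without it one would only get $c^\ast w c$, which need not lie in the \emph{left} ideal $I$. I should also double-check that $\sqrt[\alpha]{I}$ already contains $I$ (clear, since $w \in I \Rightarrow -w^\ast w \in -\Sigma_\chrisA + I \subseteq \Sigma_\chrisA + I + I^\ast$ after moving, or more simply $-w^\ast w = 0 + (-w^\ast w)$ with $-w^\ast w \in I^\ast$... in fact $w^\ast w \in I^\ast$, so $-w^\ast w + w^\ast w = 0 \in \Sigma_\chrisA$ gives $-w^\ast w \in I + I^\ast$), so together with the above $\sqrt[\alpha]{I}$ is a left ideal.
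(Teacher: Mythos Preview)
Your proof is correct and follows essentially the same approach as the paper: the parallelogram identity $-(a_1+a_2)^\ast(a_1+a_2)=(a_1-a_2)^\ast(a_1-a_2)+2(-a_1^\ast a_1)+2(-a_2^\ast a_2)$ for closure under addition, and the central bound $c^\ast c-b^\ast b\in\Sigma_\chrisA$ for closure under left multiplication. The paper's write-up of the latter is marginally more compact---it observes directly that $c^\ast c(\Sigma_\chrisA+I+I^\ast)\subseteq\Sigma_\chrisA+I+I^\ast$ without unpacking $-a^\ast a$ into $\sigma+w+w^\ast$---but the content is identical. One small slip: in your final aside you claim $w^\ast w\in I^\ast$, but in fact $w^\ast w\in I$ (since $I$ is a left ideal and $w\in I$), which is what you actually need; also this containment $I\subseteq\sqrt[\alpha]{I}$ is not part of the lemma's statement and can be omitted.
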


\begin{proof}
Suppose that $a,b \in \sqrt[\alpha]{I}$. Hence, $-a^\ast a,-b^\ast b \in \Sigma_\chrisA+I+I^\ast$ by the definition of
$\sqrt[\alpha]{I}$. It follows that
$$-(a+b)^\ast(a+b)=(a-b)^\ast(a-b)+2(-a^\ast a)+2(-b^\ast b) \in \Sigma_\chrisA+I+I^\ast. $$
Therefore, $a+b \in \sqrt[\alpha]{I}$. Suppose now that $a \in \chrisA$ and $b \in \sqrt[\alpha]{I}$. Since $\chrisA$ is
centrally bounded, there exists $c$ in the center of $\chrisA$ such that $c^\ast c-a^\ast a \in \Sigma_\chrisA$.
Since $-b^\ast b \in \Sigma_\chrisA+I+I^\ast$, it follows that
$$-b^\ast a^\ast a b = c^\ast c(- b^\ast  b) +
b^\ast (c^\ast c-a^\ast a) b \in \Sigma_\chrisA+I+I^\ast.$$ Therefore $ab \in \sqrt[\alpha]{I}$.
\end{proof}

Clearly, every commutative unital algebra in centrally bounded as well as every algebraically bounded $\ast$-algebra
(in particular, every Banach $\ast$-algebra and every group algebra with standard involution $g^\ast=g^{-1}$).
We would like to show that algebras of matrix polynomials are also
centrally bounded. This follows from the following observation.

\begin{lemma}
If $\chrisA$ is a centrally bounded $\ast$-algebra, then $M_n(\chrisA)$ is also a centrally bounded $\ast$-algebra for every $n$.
\end{lemma}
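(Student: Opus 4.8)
The plan is to isolate one purely algebraic sub-lemma about centrally bounded $\ast$-algebras, and then to reduce the matrix statement to it by a routine computation with matrix units.

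\emph{Sub-lemma.} If $\chrisA$ is centrally bounded, then for any finite family $a_1,\dots,a_m\in\chrisA$ there is an element $c$ in the center $Z(\chrisA)$ with $c^\ast c-\sum_{l=1}^m a_l^\ast a_l\in\Sigma_\chrisA$. To prove this, first note that $Z(\chrisA)$ is closed under $\ast$ (from $c^\ast a=(a^\ast c)^\ast=(c a^\ast)^\ast=a c^\ast$). Using central boundedness, pick for each $l$ a central $c_l$ with $c_l^\ast c_l-a_l^\ast a_l\in\Sigma_\chrisA$, and set $t:=1+\sum_l c_l^\ast c_l$. Then $t$ is central and hermitian, $t\in\Sigma_\chrisA$, and $s:=t-1=\sum_l c_l^\ast c_l\in\Sigma_\chrisA$; moreover, since the $c_l$ commute, $s^2=\sum_{l,l'}(c_l c_{l'})^\ast(c_l c_{l'})\in\Sigma_\chrisA$, so $t^2-t=s+s^2\in\Sigma_\chrisA$. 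Finally apply central boundedness to $t$ itself: there is a central $c$ with $c^\ast c-t^\ast t=c^\ast c-t^2\in\Sigma_\chrisA$. Then
\[
c^\ast c-\sum_{l=1}^m a_l^\ast a_l=(c^\ast c-t^2)+(t^2-t)+\Big(1+\sum_{l=1}^m(c_l^\ast c_l-a_l^\ast a_l)\Big),
\]
and all three bracketed terms lie in $\Sigma_\chrisA$.

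For the reduction, I would first record two trivialities. One: $Z(M_n(\chrisA))=\{cI_n:c\in Z(\chrisA)\}$. Two: writing $e_{ij}$ for the matrix units, a diagonal matrix $\sum_p g_p e_{pp}$ with every $g_p\in\Sigma_\chrisA$ lies in $\Sigma_{M_n(\chrisA)}$ (if $g_p=\sum_l b_{pl}^\ast b_{pl}$ then $g_p e_{pp}=\sum_l(b_{pl}e_{pp})^\ast(b_{pl}e_{pp})$), and likewise, for any $x_1,\dots,x_N$ in any $\ast$-algebra, $N\sum_i x_i^\ast x_i-(\sum_i x_i)^\ast(\sum_i x_i)=\sum_{i<j}(x_i-x_j)^\ast(x_i-x_j)\in\Sigma$. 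Now, given $A=[a_{ij}]\in M_n(\chrisA)$, write $A=\sum_{j=1}^n C_j$ where $C_j:=\sum_i a_{ij}e_{ij}$ is the matrix carrying only the $j$-th column of $A$; a short calculation gives $C_j^\ast C_j=\big(\sum_i a_{ij}^\ast a_{ij}\big)e_{jj}$. Hence $A^\ast A\le n\sum_j C_j^\ast C_j=n\sum_j e_j\,e_{jj}$ modulo $\Sigma_{M_n(\chrisA)}$, where $e_j:=\sum_i a_{ij}^\ast a_{ij}\in\Sigma_\chrisA$; replacing the diagonal matrix $n\sum_j e_j e_{jj}$ by $f I_n$ with $f:=n\sum_{i,j}a_{ij}^\ast a_{ij}=\sum_{i,j}(n a_{ij})^\ast(n a_{ij})\in\Sigma_\chrisA$ again only adds an element of $\Sigma_{M_n(\chrisA)}$. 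By the sub-lemma there is a central $c\in Z(\chrisA)$ with $c^\ast c-f\in\Sigma_\chrisA$, so $(c^\ast c-f)I_n\in\Sigma_{M_n(\chrisA)}$, and therefore $C:=cI_n\in Z(M_n(\chrisA))$ satisfies $C^\ast C-A^\ast A\in\Sigma_{M_n(\chrisA)}$, which is central boundedness of $M_n(\chrisA)$.

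The only genuine obstacle is the sub-lemma: central boundedness dominates each $a^\ast a$ by a single central square, but a priori a \emph{sum} of central squares need not be dominated by one central square. The device above — pass to $t=1+\sum_l c_l^\ast c_l$, observe $t^2\ge t$ modulo $\Sigma_\chrisA$ because both $t-1$ and $(t-1)^2$ are sums of hermitian squares, and then invoke central boundedness one more time, on $t$ — is exactly what closes that gap. Everything else is bookkeeping with matrix units.
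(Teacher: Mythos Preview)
Your proof is correct (modulo the harmless slip $f=\sum_{i,j}(na_{ij})^\ast(na_{ij})$, which should read $\sqrt{n}\,a_{ij}$), but it is organized differently from the paper's. The paper proves two closure properties in an arbitrary $\ast$-algebra---that the set of centrally bounded elements is closed under sums and under products---and then writes a matrix as $P=\sum_{i,j}(p_{ij}I_n)\cdot E_{ij}$, a sum of products of centrally bounded elements (scalar matrices $p_{ij}I_n$ and matrix units $E_{ij}$, the latter bounded by $I_n$). You instead isolate one sub-lemma (any finite $\sum_l a_l^\ast a_l$ is dominated by a single central square) and then bound $A^\ast A$ directly by a scalar matrix $fI_n$ via the column decomposition, thereby sidestepping the product-closure step altogether. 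Your sub-lemma is essentially the paper's ``sum'' step; in fact your extra invocation of central boundedness on $t$ is unnecessary, since $t=1+\sum_l c_l^\ast c_l$ is already central and hermitian, so taking $c=t$ gives $c^\ast c-\sum_l a_l^\ast a_l=(t^2-t)+(t-\sum_l a_l^\ast a_l)\in\Sigma_\chrisA$ directly---this is exactly the paper's device $(1+c_1^\ast c_1+c_2^\ast c_2)^2$ written for $m$ terms. The paper's route yields a reusable structural fact (closure under $+$ and $\cdot$); yours is shorter and more self-contained for the matrix case at hand.
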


\begin{proof}
Every element $P \in M_n(\chrisA)$ can be written as $P=\sum_{i,j=1}^n p_{ij} E_{ij}$
where $E_{ij}$ are matrix units. Since $I-E_{ij}^\ast E_{ij}=I-E_{jj}=\sum_{i \ne j} E_{ii}
=\sum_{i \ne j} E_{ii}^\ast E_{ii}$, all matrix units are centrally bounded. By assumption,
elements $p_{ij}I$ are also centrally bounded. Therefore it suffices to show that a
sum and a product of two centrally bounded elements is a centrally bounded element.
Suppose that $c_i^\ast c_i - P_i^\ast P_i \in \Sigma_\chrisA$ for $i=1,2$ where $c_i$
are central and $P_i$ are arbitrary elements of $\chrisA$.
It follows that
\[
\begin{array}{c}
(1+c_1^\ast c_1+c_2^\ast c_2)^2 - (P_1+P_2)^\ast (P_1+P_2) = \\[2mm]
=1+(c_1^\ast c_1+c_2^\ast c_2)^2+2\sum_{i=1}^2(c_i^\ast c_i - P_i^\ast P_i)
+(P_1-P_2)^\ast (P_1-P_2) \in \Sigma_\chrisA
\end{array}
\]
and
\[
\begin{array}{c}
(c_1 c_2)^\ast (c_1 c_2)-(P_1 P_2)^\ast (P_1 P_2)= \\[2mm]
= P_2^\ast (c_1^\ast c_1-P_1^\ast P_1) P_2
+c_1^\ast (c_2^\ast c_2-P_2^\ast P_2) c_1\in \Sigma_\chrisA.
\end{array}
\]
\end{proof}

\subsection{An Iterative Description of $\rr{I}$}
  \label{sec:cimNssReI}

For a left ideal $I$ in a $\ast$-algebra $\chrisA$, let
$\sqrt[\beta]{I}$ denote the left ideal in $\chrisA$ generated by
 $\sqrt[\alpha]{I}$;  i.e.
\[
\sqrt[\beta]{I} := \chrisA \sqrt[\alpha]{I}.
\]

Unlike the {\qrr}, $\sqrt[\beta]{\cdot }$ is not
idempotent. However, we do have the following:

\begin{prop}
 \label{prop:Jaka-procedure}
If $I$ is a left ideal of a $\ast$-algebra $\chrisA,$ then
\[
\sqrt[\beta]{I} \cup \sqrt[\beta]{\sqrt[\beta]{I}} \cup \sqrt[\beta]{\sqrt[\beta]{\sqrt[\beta]{I}}} \cup \ldots =
\rr{I}.
\]
\end{prop}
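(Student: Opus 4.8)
The plan is to prove the two inclusions separately, with the easy direction being that the union is contained in $\rr{I}$, and the substantive direction being the reverse.

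For the inclusion $\bigcup_{n\ge 1}\sqrt[\beta]{\cdots\sqrt[\beta]{I}}\subseteq\rr{I}$, I would argue by induction that $\sqrt[\beta]{J}\subseteq\rr{I}$ whenever $J\subseteq\rr{I}$ is a left ideal. Indeed, since the excerpt already records $\sqrt[\alpha]{J}\subseteq\rr{J}$ (``Note that $\sqrt[\alpha]{I}\subseteq\rr{I}$ by the definition of a real ideal''), and since $J\subseteq\rr{I}$ forces $\rr{J}\subseteq\rr{\rr{I}}=\rr{I}$ (the real radical is monotone and idempotent, being an intersection of real ideals), we get $\sqrt[\alpha]{J}\subseteq\rr{I}$; then $\sqrt[\beta]{J}=\chrisA\sqrt[\alpha]{J}\subseteq\chrisA\,\rr{I}=\rr{I}$ because $\rr{I}$ is itself a left ideal. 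Starting from $J=I\subseteq\rr{I}$ and iterating, every term of the union lies in $\rr{I}$, hence so does the union.

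For the reverse inclusion $\rr{I}\subseteq\bigcup_{n\ge 1}\sqrt[\beta^{(n)}]{I}$, the key point is that the ascending union $R:=\bigcup_{n\ge 1}\sqrt[\beta^{(n)}]{I}$ is itself a \emph{real} left ideal containing $I$, whence $\rr{I}\subseteq R$ by minimality of the real radical. That $R$ is a left ideal containing $I$ is immediate: it is a nested union of left ideals, each containing $I$. To see $R$ is real, suppose $\sum_{i=1}^r a_i^\ast a_i\in R+R^\ast$. Writing this element as $u+v^\ast$ with $u,v\in R$, and using that the chain $\sqrt[\beta]{I}\subseteq\sqrt[\beta]{\sqrt[\beta]{I}}\subseteq\cdots$ is increasing, both $u$ and $v$ lie in a single stage $\sqrt[\beta^{(n)}]{I}$ for $n$ large enough; call this stage $J$. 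So $\sum a_i^\ast a_i\in J+J^\ast\subseteq\Sigma_\chrisA+J+J^\ast$, which shows $-\big(\sum_{j\ne i}a_j^\ast a_j\big)-$ wait, more directly: for each fixed $i$ we have $a_i^\ast a_i = \big(u+v^\ast\big)-\sum_{j\ne i}a_j^\ast a_j$, hence $-a_i^\ast a_i = \sum_{j\ne i}a_j^\ast a_j - u - v^\ast \in \Sigma_\chrisA + J + J^\ast$. By definition of $\sqrt[\alpha]{\cdot}$ this means $a_i\in\sqrt[\alpha]{J}$, and therefore $a_i\in\chrisA\sqrt[\alpha]{J}=\sqrt[\beta]{J}=\sqrt[\beta^{(n+1)}]{I}\subseteq R$. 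Thus $R$ is real.

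The main obstacle I anticipate is the subtlety in the ``real'' verification: one must be careful that an element of $R+R^\ast$, which a priori only says $u\in R$ and $v\in R$ separately (possibly at different stages), can be pushed into a \emph{common} finite stage using the nestedness of the chain, and then that the membership $-a_i^\ast a_i\in\Sigma_\chrisA+J+J^\ast$ is exactly the defining condition of $\sqrt[\alpha]{J}$ rather than of some closure thereof — this is why the argument uses $\sqrt[\alpha]{\cdot}$ (no closure) and not $\sqrt[\cR]{\cdot}$. Everything else is bookkeeping with monotonicity and idempotence of $\rr{\cdot}$, which follow formally from its definition as the smallest real ideal containing the given ideal, together with the fact (stated in the excerpt) that an intersection of real ideals is real.
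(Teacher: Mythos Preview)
Your proof is correct and follows essentially the same route as the paper: show the union is contained in $\rr{I}$ by induction on the stage, and show the reverse inclusion by proving the union is itself a real left ideal, using that any relation $\sum a_i^\ast a_i\in R+R^\ast$ can be pushed into a single finite stage and then reading off $a_i\in\sqrt[\alpha]{\cdot}$ of that stage. The only point you use without comment is that the chain is genuinely increasing (so the union is nested); this follows at once from $a\in I\Rightarrow -a^\ast a\in I\subseteq \Sigma_{\chrisA}+I+I^\ast$, hence $I\subseteq\sqrt[\alpha]{I}\subseteq\sqrt[\beta]{I}$.
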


\begin{proof}
Write $I_0=I$ and $I_{n+1}=\sqrt[\beta]{I_n}$ for every $n=0,1,2,\ldots$.
Hence, the left-hand side of the formula is $J:=\bigcup_{n=0}^\infty I_n$.
To show that $J \subseteq \rr{I}$, it suffices to show that
$I_n \subseteq \rr{I}$ for every $n$. This is clear for $n=0$.
Suppose this is true for some $n$ and pick $x \in I_{n+1}$. By the definition
of $I_{n+1}$, $x=\sum_{i=1}^k a_i y_i$, where $a_i \in \chrisA$ and
$-y_i^\ast y_i \in \Sigma_\chrisA+I_n+I_n^\ast$ for $i=1,\ldots,k$.
Since $I_n \subseteq \rr{I}$ and $\rr{I}$ is real,
it follows that $y_i \in \rr{I}$ for every $i=1,\ldots,k$. Hence $x \in \rr{I}$.
We will prove the opposite inclusion $\rr{I} \subseteq J$ by showing that $J$ is real.
Pick $u_1,\ldots,u_r \in  \chrisA$ such that $\sum_{i=1}^r u_i^\ast u_i \in J+J^\ast$. By the definition of $J$,
there exists a number $n$ and elements $b,c \in I_n$ such that $\sum_{i=1}^r u_i^\ast u_i=b+c^\ast$.
It follows that for every $i=1,\ldots,r$, $-u_i^\ast u_i \in \Sigma_\chrisA+I_n+I_n^\ast$.
Therefore $u_i \in \sqrt[\alpha]{I_n} \subseteq \sqrt[\beta]{I_n}=I_{n+1} \subseteq J$.
\end{proof}

  Specializing the iterative procedure of Proposition
  \ref{prop:Jaka-procedure}, which works in all $\ast$-algebras,
 to the case of a left ideal in free
  $\ast$-algebra does not lead to the Real Algorithm. Here is an informal comparison:

\begin{enumerate}
\item
Proposition \ref{prop:Jaka-procedure} adds
all  tuples $(q_i)$ such that $\sum_i q_i^\ast q_i \in I_k+I_k^\ast$
to $I_k$ to produce the update $I_{k+1}$; whereas
the Real Algorithm adds one such tuple $(q_i)$ which was well chosen to $I^{(k)}$ to
produce $I^{(k+1)}$.
\item
  For a general $\ast$-algebra $\chrisA$ and left ideal $I$,
  the iterations in Proposition \ref{prop:Jaka-procedure}
  do not necessarily  stop unless $\chrisA$ is left noetherian
 (such us $M_n(F[x])$, see \S \ref{sec:examples}.) 
However, in the case $I$ is a left
ideal in the free $\ast$-algebra $\FA$, the inclusion
 sense for finitely generated left ideals in $I^{(k)} \subseteq I_k$
 implies the procedure of Proposition \ref{prop:Jaka-procedure} does
 terminate.
\item
  Unlike the Real Algorithm, even if only finitely many iterations are needed in Proposition
  \ref{prop:Jaka-procedure},
  it does not tell us how to obtain generators
 of $\rr{I}$ from the generators of $I$. (This is a nontrivial problem
 even for $\RR[x]$, cf. \cite{llr} for a partial
 solution, and it is still open for $M_n(F[x])$.)
 \end{enumerate}


For centrally bounded algebras, Proposition \ref{prop:Jaka-procedure} and Lemma \ref{lem:cb} imply 
the following simple iterative description of the elements of the {\qrr}:

\begin{cor}
\label{cor:simpdescr}
Let $I$ be a left ideal of a centrally bounded $\ast$-algebra $\chrisA$. An element
$x \in \chrisA$ belongs to $\rr{I}$ if there exist $m \in \NN$,
$s_1,\ldots,s_m \in \Sigma_\chrisA$ and $k_1,\ldots,k_m \in \{a \in \chrisA \mid a^\ast=-a\}$
such that the last term of the sequence \[x_1:=x, \quad x_{i+1}:=x_i^\ast x_i+s_i+k_i,
i=1,\ldots,m, \]
belongs to $I$.
\end{cor}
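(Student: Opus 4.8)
The plan is to deduce this from Proposition \ref{prop:Jaka-procedure} together with Lemma \ref{lem:cb}, the key point being that in a centrally bounded algebra the operation $\sqrt[\beta]{\cdot}$ collapses back to $\sqrt[\alpha]{\cdot}$. First I would observe that since $\chrisA$ is centrally bounded, Lemma \ref{lem:cb} tells us $\sqrt[\alpha]{J}$ is already a left ideal for any left ideal $J$, so $\sqrt[\beta]{J}=\chrisA\sqrt[\alpha]{J}=\sqrt[\alpha]{J}$. Hence the iterated sequence $I_0=I$, $I_{n+1}=\sqrt[\beta]{I_n}$ of Proposition \ref{prop:Jaka-procedure} is the same as $I_{n+1}=\sqrt[\alpha]{I_n}$, and Proposition \ref{prop:Jaka-procedure} gives $\rr{I}=\bigcup_{n\ge 0} I_n$.

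Next I would unwind what membership in the $n$-th iterate means concretely. By definition $x\in\sqrt[\alpha]{J}$ exactly when $-x^\ast x\in\Sigma_\chrisA+J+J^\ast$, i.e. there exist $s\in\Sigma_\chrisA$ and $j\in J$ (using $J+J^\ast\ni j+j'^\ast$ and symmetrizing $-x^\ast x=\tfrac12(-x^\ast x)+\tfrac12(-x^\ast x)^\ast$, absorbing into $s$ and into the self-adjoint piece; more simply, write $j+j'^\ast$ and set $k=\tfrac12(j-j'^\ast)$ minus something — the cleanest route is: $-x^\ast x=s+j+j'^\ast$ with $s\in\Sigma_\chrisA$, $j,j'\in J$, so $x^\ast x+s+j+j'^\ast=0$, and then $x^\ast x+s = -(j+j'^\ast)\in J+J^\ast$) such that $x^\ast x+s+k\in J$ for a skew-adjoint $k$. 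Concretely: if $-x^\ast x = s + u + v^\ast$ with $u,v\in J$, put $w=\tfrac12(u+v)\in J$; then $x^\ast x + s + \tfrac12(u-v)^{\mathrm{sk}}$... — the honest statement is that $x\in\sqrt[\alpha]{J}$ iff there are $s\in\Sigma_\chrisA$ and skew-adjoint $k$ with $x^\ast x+s+k\in J$, which one checks directly from $J+J^\ast = (J\cap\chrisA_h)\oplus(\text{skew part})$ modulo the symmetric part; I would spell this small equivalence out. Then $x\in I_1=\sqrt[\alpha]{I}$ means $x_2:=x^\ast x+s_1+k_1\in I$ for suitable $s_1\in\Sigma_\chrisA$, skew $k_1$; and $x\in I_{n}$ means there is a chain $x_1=x$, $x_{i+1}=x_i^\ast x_i+s_i+k_i$ with $x_{i}\in I_{n-i+1}$ terminating in $x_{m+1}\in I_0=I$. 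An easy induction on $n$ turns ``$x\in I_n$'' into exactly the statement of the corollary (with $m=n$).

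The remaining direction is routine: if such a terminating sequence exists, then reading it from the bottom, $x_m\in\sqrt[\alpha]{I}\subseteq\rr{I}$ since $x_m^\ast x_m+s_m+k_m=x_{m+1}\in I$ forces $-x_m^\ast x_m\in\Sigma_\chrisA+I+I^\ast$, hence $x_m\in\rr{I}$ because $\rr{I}$ is real (Lemma \ref{lem:qradicals}-style reasoning: $x_m^\ast x_m\in\rr{I}+\rr{I}^\ast$); then inductively $x_{i}\in\rr{I}$ for all $i$, so $x=x_1\in\rr{I}$. Since no chain-termination or noetherian hypothesis is needed for this converse, only the forward direction uses Proposition \ref{prop:Jaka-procedure}. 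The one place requiring a little care — and the main (mild) obstacle — is the bookkeeping of the self-adjoint versus skew-adjoint decomposition of $I+I^\ast$, i.e. verifying cleanly that $x\in\sqrt[\alpha]{J}$ is equivalent to ``$\exists\, s\in\Sigma_\chrisA,\ \exists\, k$ skew-adjoint with $x^\ast x+s+k\in J$''; everything else is a direct unwinding of definitions and an induction.
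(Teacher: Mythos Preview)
Your approach is exactly the one the paper intends: use Lemma \ref{lem:cb} to see that $\sqrt[\beta]{J}=\sqrt[\alpha]{J}$ in the centrally bounded case, so the iteration of Proposition \ref{prop:Jaka-procedure} becomes $I_{n+1}=\sqrt[\alpha]{I_n}$ with $\rr{I}=\bigcup_n I_n$, and then unwind membership in $I_n$ into the sequence $(x_i)$. The one piece of bookkeeping you flag --- that $x\in\sqrt[\alpha]{J}$ is equivalent to $x^\ast x+s+k\in J$ for some $s\in\Sigma_{\chrisA}$ and skew $k$ --- comes out cleanly once you write the self-adjoint element $x^\ast x+s$ as $c+c^\ast$ with $c=\tfrac12(u+v)\in J$ (exactly as in the proof of Proposition \ref{nsatz-prop}) and take $k=c-c^\ast$.
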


For commutative $\ast$-algebras, we have the following
classical real Nullstellensatz:

\begin{cor}
\label{prop:srr}
For every ideal $I$ of a commutative $\ast$-algebra $\chrisA$ we have that
\begin{equation*}\begin{split}
\rr{I}  &= \{a \in \chrisA \mid -(a^\ast a)^k \in \Sigma_\chrisA+I+I^\ast \text{ for some } k\} \\
&= \{a \in \chrisA \mid -(a^\ast a)^k \in \Sigma_\chrisA+I \text{ for some } k\}.
\end{split}\end{equation*}
\end{cor}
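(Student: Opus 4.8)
The plan is to prove the chain $R_2\subseteq R_1\subseteq\rr I\subseteq R_2$, where $R_1$ and $R_2$ denote respectively the first and second sets on the right of the statement; the inclusion $R_2\subseteq R_1$ is trivial since $\Sigma_\chrisA+I\subseteq\Sigma_\chrisA+I+I^\ast$, so $R_1=R_2=\rr I$ will follow from the other two inclusions. (One may always take $k\ge 1$, multiplying through by $a^\ast a\in\Sigma_\chrisA$.) I shall use three consequences of commutativity: $(a^\ast a)^k=(a^k)^\ast(a^k)$; finite products and positive powers of elements of $\Sigma_\chrisA$ again lie in $\Sigma_\chrisA$; and every hermitian element of $I+I^\ast$ has the form $z+z^\ast$ with $z\in I$ (if $w=u+v^\ast=w^\ast$ with $u,v\in I$ then $2w=(u+v)+(u+v)^\ast$, so $z=\tfrac12(u+v)$ works). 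I also use repeatedly that $\rr I$ is real, being an intersection of real ideals.

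\emph{Step 1: $R_1\subseteq\rr I$.} The crux is a ``hermitian power descent'': if $c\in\chrisA$ is hermitian with $c^k\in\rr I$, then $c\in\rr I$. Since $c^{k+1}=c\cdot c^k\in\rr I$ we may take $k=2^\ell$; then $c^{2^\ell}=(c^{2^{\ell-1}})^\ast(c^{2^{\ell-1}})\in\rr I$, so realness of $\rr I$ forces $c^{2^{\ell-1}}\in\rr I$, and iterating down the exponent gives $c\in\rr I$. Now let $a\in R_1$. Then $(a^k)^\ast(a^k)+\sigma\in I+I^\ast\subseteq\rr I+\rr I^\ast$ for some $\sigma\in\Sigma_\chrisA$, so realness of $\rr I$ gives $a^k\in\rr I$, hence $(a^\ast a)^k=(a^k)^\ast(a^k)\in\rr I$; the descent applied to the hermitian element $c=a^\ast a$ gives $a^\ast a\in\rr I$, and realness once more gives $a\in\rr I$.

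\emph{Step 2: $\rr I\subseteq R_2$.} As $\chrisA$ is commutative it is centrally bounded, so by Lemma \ref{lem:cb} every $\sqrt[\alpha]{J}$ is a left ideal and hence $\sqrt[\beta]{J}=\chrisA\sqrt[\alpha]{J}=\sqrt[\alpha]{J}$; thus Proposition \ref{prop:Jaka-procedure} gives $\rr I=\bigcup_{n\ge 0}I_n$ with $I_0=I$ and $I_{n+1}=\sqrt[\alpha]{I_n}$. I will prove by induction on $n$ the statement $(\dagger_n)$: \emph{whenever $a^\ast a+s+k\in I_n$ with $s\in\Sigma_\chrisA$ and $k^\ast=-k$, then $a\in R_2$.} Granting $(\dagger_n)$ for all $n$, apply it with $s=k=0$ to any $x\in I_n$ (noting $x^\ast x\in I_n$) to conclude $I_n\subseteq R_2$, whence $\rr I=\bigcup_n I_n\subseteq R_2$. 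For the base case $(\dagger_0)$: if $w:=a^\ast a+s+k\in I$ then $w^\ast w\in I$, and commutativity with $k^\ast=-k$ gives $w^\ast w=(a^\ast a+s)^2-k^2=(a^\ast a+s)^2+k^\ast k$; expanding $(a^\ast a+s)^2=(a^\ast a)^2+\tau$ with $\tau\in\Sigma_\chrisA$ yields $(a^\ast a)^2+\tau+k^\ast k\in I$, i.e.\ $-(a^\ast a)^2\in\Sigma_\chrisA+I$. For the step, let $w:=a^\ast a+s+k\in I_{n+1}=\sqrt[\alpha]{I_n}$ and put $c:=a^\ast a+s\in\Sigma_\chrisA$ (hermitian), so $w^\ast w=c^2+k^\ast k$ as before. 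From $-w^\ast w\in\Sigma_\chrisA+I_n+I_n^\ast$ we obtain $\sigma\in\Sigma_\chrisA$ with $c^2+k^\ast k+\sigma\in I_n+I_n^\ast$; this element is hermitian, so it equals $z+z^\ast$ with $z\in I_n$, and therefore $c^\ast c+(k^\ast k+\sigma)+(z-z^\ast)=2z\in I_n$. Since $k^\ast k+\sigma\in\Sigma_\chrisA$ and $z-z^\ast$ is skew, $(\dagger_n)$ applied to $c$ gives $c=a^\ast a+s\in R_2$; thus $c^{2j}+\sigma_1\in I$ for some $j$ and $\sigma_1\in\Sigma_\chrisA$ (using $c^\ast c=c^2$), and expanding $(a^\ast a+s)^{2j}=(a^\ast a)^{2j}+(\text{element of }\Sigma_\chrisA)$ gives $-(a^\ast a)^{2j}\in\Sigma_\chrisA+I$, i.e.\ $a\in R_2$. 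This completes the induction and the proof, yielding $R_1=R_2=\rr I$.

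The delicate point is the second, sharper equality, where $I+I^\ast$ is replaced by $I$: a direct elementwise attempt at $R_1\subseteq R_2$ keeps returning membership in $I+I^\ast$ rather than in $I$, and a frontal proof that $R_2$ is an ideal reduces to the intricate classical argument for real radicals. The route above sidesteps this, since commutativity lets one absorb the hermitian part of $I+I^\ast$ as $z+z^\ast$, and the skew term built into the iterative procedure of Proposition \ref{prop:Jaka-procedure} is exactly what converts $z+z^\ast$ into $2z\in I_n$ while preserving the shape ``square $+\ \Sigma_\chrisA\ +$ skew'' needed to re-enter the induction.
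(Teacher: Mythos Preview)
Your proof is correct, but it is organised rather differently from the paper's.

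For the inclusion $\rr I\subseteq R_2$, the paper introduces an auxiliary operator $\sqrt[\gamma]{J}:=\{a\mid -a^\ast a\in\Sigma_\chrisA+J\}$ (note: $J$, not $J+J^\ast$), proves $\sqrt[\gamma]{J}\subseteq\sqrt[\alpha]{J}\subseteq\sqrt[\gamma]{\sqrt[\gamma]{J}}$, and then shows by the same binomial-expansion trick you use that the $n$-th $\gamma$-iterate $K_n$ of $I$ is exactly $\{a\mid -(a^\ast a)^{2^{n-1}}\in\Sigma_\chrisA+I\}$. Sandwiching the $\alpha$-iterates between $\gamma$-iterates and invoking Proposition~\ref{prop:Jaka-procedure} gives $\rr I=\bigcup K_n=R_2$ in one stroke. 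Your route instead keeps the $\alpha$-iterates $I_n$ and carries along a strengthened inductive hypothesis $(\dagger_n)$ of the shape ``$a^\ast a+s+k\in I_n\Rightarrow a\in R_2$'' with $s\in\Sigma_\chrisA$ and $k$ skew; the skew term is precisely what lets you rewrite a hermitian element of $I_n+I_n^\ast$ as $2z\in I_n$ and re-enter the induction. This is in the spirit of Corollary~\ref{cor:simpdescr} and is a perfectly valid alternative, if somewhat more hands-on.

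For the inclusion $R_1\subseteq\rr I$, your ``hermitian power descent'' is fine; the paper instead observes that $\sqrt[\alpha]{I}=\sqrt[\alpha]{I+I^\ast}$, hence $\rr I=\rr{I+I^\ast}$, and then applies the already-proved equality $\rr J=R_2(J)$ to $J=I+I^\ast$, which gives $R_1(I)=R_2(I+I^\ast)=\rr{I+I^\ast}=\rr I$ without a separate argument. The paper's packaging via $\sqrt[\gamma]{\cdot}$ is more conceptual and yields the explicit exponent $2^{n-1}$ at stage $n$; your approach avoids introducing a new radical but trades this for the loaded inductive statement $(\dagger_n)$.
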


\begin{proof}
For every ideal $J$ of $\chrisA$ write 
\begin{equation*}
\sqrt[\gamma]{J} := \{ a \in \chrisA \mid -a^\ast a \in \Sigma_\chrisA+J\}.
\end{equation*}
Since $J \subseteq \sqrt[\gamma]{J}$, $(\sqrt[\gamma]{J})^\ast=\sqrt[\gamma]{J}$ and  $\sqrt[\gamma]{J+J^\ast} = \sqrt[\alpha]{J}$, 
we have that 
\begin{equation}
\label{gam1}
\sqrt[\gamma]{J} \subseteq \sqrt[\alpha]{J} \subseteq \sqrt[\gamma]{\sqrt[\gamma]{J}}.
\end{equation}
If $a \in \sqrt[\gamma]{\sqrt[\gamma]{J}}$ for some $a \in \chrisA$, then $a^\ast a+\sigma \in \sqrt[\gamma]{J}$
for some $\sigma \in \Sigma_\chrisA$. It follows that $(a^\ast a+\sigma)^2+\tau \in J$ for some $\tau\in \Sigma_\chrisA$.
Since $2 \sigma a^\ast a+\sigma^2+\tau \in  \Sigma_\chrisA$, it follows that $a^\ast a \in \sqrt[\gamma]{J}$. Therefore
\begin{equation}
\label{gam2}
\sqrt[\gamma]{\sqrt[\gamma]{J}}=\{a \in \chrisA \mid a^\ast a \in \sqrt[\gamma]{J}\}.
\end{equation}

For every ideal $I$ of $\chrisA$ we define two sequences:
$$I_0=I, I_{n+1}=\sqrt[\alpha]{I_n} \quad \mbox{ and } \quad
K_0=I, K_{n+1} = \sqrt[\gamma]{K_n}.$$
 By induction on $n$, using (\ref{gam1}),
we show that $K_n \subseteq I_n \subseteq K_{2^n}$. 
By Proposition \ref{prop:Jaka-procedure}, it follows that
\begin{equation}
\label{gam3}
\bigcup_{n=0}^\infty K_n=\bigcup_{n=0}^\infty I_n=\rr{I}.
\end{equation}
Note also that $\sqrt[\alpha]{I}=\sqrt[\alpha]{I+I^\ast}$, hence $$\rr{I}=\rr{I+I^\ast}$$ by Proposition \ref{prop:Jaka-procedure}.
On the other hand, equation (\ref{gam2}) implies that 
\begin{equation}
K_n=\{a \in \chrisA \mid -(a^\ast a)^{2^{n-1}} \in \Sigma_\chrisA+I\}.
\end{equation}
To finish the proof, note that $-(a^\ast a)^n \in \Sigma_\chrisA+I$ implies $-(a^\ast a)^{2^{n-1}} \in \Sigma_\chrisA+I$.
\end{proof}

\section{A Nullstellensatz for $M_n(F[x])$}
\label{sec:examples}

We will discuss the following question:

\bs

\noindent
{\bf Question:}
Which  left ideals $I$ in $M_n(F[x])$ satisfy
$\sqrt[\cE]{I}=\rr{I}$? 

\bs

Recall that $\sqrt[\cE]{I}=\{Q \in M_n(F[x]) \mid Q(a)v=0$ for every $a \in \RR^g$ and $v \in F^n$ such that
$P(a)v=0$ for all $P(x) \in I\}$.

We will prove the answer is yes for all
$I$ 
 in the cases of $g=0$ and $g=1$ variables,
see Propositions \ref{g0thm} and \ref{g1thm}. 
The case of several variables remains undecided,
except for $n=1$ which is classical, see Example \ref{ex:classical}

Example \ref{ex:classical} rephrases the classical Real Nullstellensatz
of Dubois \cite{dub}, Risler \cite{ris} and Efroymson \cite{efr},
and extends it from $\RR[x]$ to $\CC[x]$. 

\begin{exa}
\label{ex:classical}
For every ideal $I$ of $F[x]$ we have that
$$
\sqrt[\cE]{I}= \rr{I}.
$$

If a polynomial $q \in F[x]$ belongs to $\sqrt[\cE]{I}$,
then $q(a)v=0$ for every $(a,v) \in \RR^g \times F$ such that $p(a)v=0$ for all $p \in I$.
It follows that $q(a)=0$ for every $a \in \RR^g$ such that $p(a)=0$ for all $p \in I$,
hence $(\bar{q}q)(a)=0$ for every $a \in \RR^g$ such that $(\bar{p}p)(a)=0$ for all $p \in I$.
By the classical Real Nullstellensatz, there exists $k \in \NN$
such that $-(\bar{q}q)^{2k} \in \Sigma_\chrisA+$ ideal generated by $\bar{p}p$, $p \in I$.
It follows that $q \in \rr{I}$.
 \qed
\end{exa}

\begin{prop}
\label{g0thm}
For every left ideal $I$ of $M_n(F)$, we have that
$$
I=\rr{I} = \sqrt[\cE]{I}.
$$
\end{prop}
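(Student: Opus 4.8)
The plan is to prove the single inclusion $\sqrt[\cE]{I}\subseteq I$; the reverse inclusion and all the remaining equalities then come for free from the chain $I\subseteq\rr{I}\subseteq\sqrt[\cR]{I}\subseteq\sqrt[\cE]{I}$ recorded just after Lemma~\ref{lem:qradicals} (valid here since $\cE\subseteq\cR$). So the whole content is to show that in $M_n(F)$ every left ideal equals its $\cE$-saturation.

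First I would specialize the definitions to the case $g=0$ of Example~\ref{ex:matpoly}: there $\mathrm{pt}_\cE(M_n(F))$ is identified with $F^n$ via $v\mapsto(\id,v)$, so $V_\cE(I)=\{v\in F^n\mid Pv=0\text{ for all }P\in I\}=\bigcap_{P\in I}\ker P$, which I call $W$, and $\sqrt[\cE]{I}=\J(V_\cE(I))=\{Q\in M_n(F)\mid Qv=0\text{ for all }v\in W\}$.

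The key step is the elementary fact that a matrix annihilating $W$ is a left $M_n(F)$-combination of generators of $I$. Concretely: since $I$ is a linear subspace of the finite-dimensional space $M_n(F)$, I pick $P_1,\dots,P_m\in I$ with $\bigcap_{i=1}^m\ker P_i=W$. If $Q\in\sqrt[\cE]{I}$, then $\ker Q\supseteq W=\bigcap_i\ker P_i$; taking annihilators in $(F^n)^*$ shows the row space of $Q$ is contained in the sum of the row spaces of the $P_i$. Hence each row of $Q$ is a linear combination of rows of $P_1,\dots,P_m$, i.e.\ $Q=\sum_{i=1}^m C_iP_i$ for suitable $C_i\in M_n(F)$, and since $I$ is a left ideal $Q\in I$. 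Thus $\sqrt[\cE]{I}\subseteq I$, and combining with the chain gives $I=\rr{I}=\sqrt[\cR]{I}=\sqrt[\cE]{I}$, in particular the asserted $I=\rr{I}=\sqrt[\cE]{I}$.

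There is no real obstacle here: this is essentially the base case $g=0$ of the discussion of $M_n(F[x])$, and the only thing to watch is the linear-algebra bookkeeping identifying ``$Q$ annihilates $\bigcap_i\ker P_i$'' with ``$Q$ is a left combination of the $P_i$'' --- equivalently, the classification of left ideals of $M_n(F)$ as $\{Q\mid Q|_W=0\}$ for subspaces $W\subseteq F^n$. Note that the argument uses neither the involution nor the inner product on $M_n(F)$.
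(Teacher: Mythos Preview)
Your proof is correct and follows essentially the same route as the paper's own argument: reduce to showing $\sqrt[\cE]{I}\subseteq I$, identify $V_\cE(I)$ with the common kernel of finitely many elements of $I$, and use the row-space inclusion to write any $Q\in\sqrt[\cE]{I}$ as a left $M_n(F)$-combination of those elements. The only cosmetic difference is that the paper stacks generators $B_1,\dots,B_r$ of $I$ into a single tall matrix $\mathbf{B}$ and phrases the conclusion as $C=R\mathbf{B}$, whereas you pick $P_1,\dots,P_m\in I$ with the correct joint kernel and write $Q=\sum_i C_iP_i$; the linear algebra is identical.
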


\begin{proof}
It suffices to show that $\sqrt[\cE]{I} \subseteq I$.
Since $M_n(F)$ is finite-dimensional, $I$ is finitely generated, let $B_1,\ldots,B_r$ be the generators of $I$
as a left ideal. It follows that
\[
\sqrt[\cE]{I}=\{C \in M_n(F) \mid \ker \mathbf{B} \subseteq \ker C\}
\quad \text{ where } \quad \mathbf{B}=\left[ \begin{array}{c} B_1 \\ \vdots \\ B_r \end{array} \right].
\]
For each $C \in \sqrt[\cE]{I}$, one sees that $\ker \mathbf{B} \subseteq \ker C$, which implies that the row space of $C$ is contained in the row space of $\mathbf{B}$. Therefore, there exists a matrix $R=\left[R_1 \ldots R_r \right]$ such that
$C=R\mathbf{B}$. It follows that $C \in I$.
\end{proof}

\begin{thm}
\label{g1thm}
For every positive integer $n$ and every left ideal $I$ in $M_n(F[x_1])$ we have that
$$\sqrt[\cE]{I}=\rr{I}.$$
\end{thm}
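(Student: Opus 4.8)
The plan is to reduce the statement, via the general machinery already built, to a purely local, one-variable computation over the PID $F[x_1]$. First I would note that by Lemma \ref{lem:qradicals} we always have $\rr{I} \subseteq \sqrt[\cR]{I} \subseteq \sqrt[\cE]{I}$, so only the inclusion $\sqrt[\cE]{I} \subseteq \rr{I}$ needs proof. Since $M_n(F[x_1])$ is left noetherian (it is a finitely generated module over the Noetherian ring $F[x_1]$), $I$ is finitely generated and, more importantly, $\rr{I}$ is a left ideal to which the iterative procedure of Proposition \ref{prop:Jaka-procedure} applies and terminates; moreover $M_n(F[x_1])$ is centrally bounded by the two lemmas preceding \S\ref{sec:cimNssReI} (apply the matrix-amplification lemma to the commutative algebra $F[x_1]$). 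Hence by Corollary \ref{cor:simpdescr} membership in $\rr{I}$ is witnessed by a finite chain $x_1 = x$, $x_{i+1} = x_i^\ast x_i + s_i + k_i$ landing in $I$, with $s_i \in \Sigma_{M_n(F[x_1])}$ and $k_i$ skew. So the goal becomes: given $Q \in \sqrt[\cE]{I}$, produce such a chain.

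Next I would use the structure of left ideals in $M_n(F[x_1])$. A left ideal $I$ of $M_n(F[x_1])$ is the same data as a $F[x_1]$-submodule of the free module $F[x_1]^n$ (columns), namely the common column space of the generators; equivalently $I = \{ P : \operatorname{row}(P) \subseteq R\}$ for a fixed $F[x_1]$-submodule $R \subseteq F[x_1]^{1\times n}$, i.e. $I$ is determined by a single matrix $A \in M_n(F[x_1])$ with $I = M_n(F[x_1])\,A$ after putting a generating matrix into a Hermite/Smith-type normal form over the PID $F[x_1]$. The key point is then that $\sqrt[\cE]{I}$ is described pointwise: $Q \in \sqrt[\cE]{I}$ iff $\ker A(a) \subseteq \ker Q(a)$ for every $a \in \RR$. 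One then wants to convert this pointwise kernel-containment, uniformly in the real parameter $a$, into an algebraic identity over $F[x_1]$ up to a sum of squares and an element of $I + I^\ast$.

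The mechanism for that conversion is the classical one-variable real Nullstellensatz / Positivstellensatz applied entrywise, exactly in the spirit of Example \ref{ex:classical} but amplified to matrices: the obstruction to $\operatorname{row}(Q) \subseteq \operatorname{row}(A)$ over $F[x_1]$ is supported on the finitely many real roots of the relevant (determinantal/invariant-factor) polynomials, and at such a real root $a$ the hypothesis $\ker A(a)\subseteq\ker Q(a)$ kills precisely that obstruction; using that over $\RR[x_1]$ (or $\CC[x_1]$) a nonnegative-on-$\RR$ polynomial is a sum of two squares, one packages the defect of the division $Q = (\text{something})A + (\text{remainder})$ as $-Q^\ast Q \in \Sigma + I + I^\ast$, i.e. $Q \in \sqrt[\alpha]{I}$, or after finitely many such steps $Q \in \rr{I}$ by Corollary \ref{cor:simpdescr}. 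I expect the bookkeeping that matches the local kernel condition at each real root with the invariant factors of $A$ — and checking that the resulting congruence can be taken symmetric modulo $I + I^\ast$ rather than just modulo $I$ — to be the main obstacle; the single-variable case is what makes it tractable, since $F[x_1]$ is a PID and all the module theory (Smith normal form, localization at $(x_1-a)$) is available, whereas for $g \geq 2$ this is exactly where the argument breaks down and the problem is left open.
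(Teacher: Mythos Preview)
Your proposal correctly identifies the essential ingredients --- noetherianity of $M_n(F[x_1])$, principality of left ideals via Smith normal form over the PID $F[x_1]$, the pointwise description $Q\in\sqrt[\cE]{I}\iff\ker A(a)\subseteq\ker Q(a)$ for all $a\in\RR$, and the classical one-variable real Nullstellensatz as the scalar base case --- but it stops precisely at the step that constitutes the actual proof. You write that matching the local kernel condition at each real root with the invariant factors of $A$ is ``the main obstacle,'' and that is accurate: you have not carried it out, and the route you sketch (localization at $(x_1-a)$, packaging the remainder of a division $Q=(\cdot)A+(\cdot)$ as a sum of squares modulo $I+I^\ast$) is not obviously workable in one step, because the kernel of $A(a)$ can jump in dimension at special points and the ``remainder'' need not be controllable entrywise. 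Invoking Corollary~\ref{cor:simpdescr} is legitimate but does not help you produce the chain; it only says such a chain exists once you already know $Q\in\rr{I}$.

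The paper's proof resolves exactly this obstacle by a structural reduction rather than a local computation. First it reduces to a principal ideal $(P)$ with $P=\sum_i P_i^\ast P_i$, then uses the Smith normal form $P=UDV$ to reduce to the \emph{diagonal} case: one checks directly that $\sqrt[\cE]{(DV)}=\sqrt[\cE]{(D)}\,V$ and $\rr{(DV)}=\rr{(D)}\,V$ (the latter by showing $\rr{(D)}\,V$ is itself real). With $D=D_1\oplus D_2$ diagonal, an induction on $n$ finishes: if $R=[R_1\ R_2]\in\sqrt[\cE]{(D_1\oplus D_2)}$, then setting one block of the test vector to zero gives $R_i^\ast R_i\in\sqrt[\cE]{(D_i)}\subseteq\rr{(D_i)}$ by the inductive hypothesis, and embedding back yields $[R_1\ 0],[0\ R_2]\in\rr{(D_1\oplus D_2)}$. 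The diagonal reduction is the move your plan is missing: it decouples the matrix problem into scalar ones and makes the appeal to Example~\ref{ex:classical} clean, bypassing the pointwise bookkeeping entirely.
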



\begin{proof} The proof consists of three steps:
\begin{enumerate}
\item Reduction to the case $I=(P)$, that is, the case where $I$ is a principal ideal.
\item Reduction to the case where $P$ is diagonal.
\item Induction on $n$.
\end{enumerate}
Steps (1) and (3) also work for several variables but step (2) does not.

\medskip

Since $F[x_1]$ is left noetherian so is $M_n(F[x_1])$, see Proposition 1.2. in \cite{mcr}.
Therefore $I=(P_1,\ldots,P_k)$ for some $P_1,\ldots,P_k \in M_n(F[x_1])$.
Define $P=P_1^\ast P_1+\ldots+P_k^\ast P_k$ and note that $(P) \subseteq I \subseteq \sqrt[\alpha]{(P)}$.
It follows that $\rr{I}=\rr{(P)}$ and $\sqrt[\cE]{I}=\sqrt[\cE]{(P)}$,
proving (1).

\medskip

Let $P=UDV$ be the Smith normal form of $P$, i.e. $U$ and $V$ are invertible in $M_n(F[x_1])$
and $D$ is diagonal. Since $(P)=(DV)$, it suffices to prove that
$\sqrt[\cE]{(DV)}=\sqrt[\cE]{(D)} V$ and $\rr{(DV)}=\rr{(D)}V$.
Clearly, $R \in \sqrt[\cE]{(DV)}$ iff $R(a)w=0$ for every $a \in \RR$ and $w \in F^n$ such that $D(a)V(a)w=0$
iff $R(a)V(a)^{-1}z=0$ for every $a \in \RR$ and $z \in F^n$ such that $D(a)z=0$ iff $RV^{-1} \in \sqrt[\cE]{(D)}$.
To prove the second equality, it suffices to show that $\rr{(DV)} \subseteq \rr{(D)}V$.
Namely, replacing $V$ by $V^{-1}$ and $D$ by $DV$, we get the opposite inclusion.
We have to show that the left ideal $\rr{(D)}V$, which contains $(DV)$, is real.
Suppose that $\sum_i Q_i^\ast Q_i \in \rr{(D)}V$ for some $Q_i$. It follows that
$\sum_i (V^{-1})^\ast Q_i^\ast Q_i V^{-1} \in (V^{-1})^\ast \rr{(D)} \subseteq\rr{(D)}$,
hence $Q_i V^{-1} \in \rr{(D)}$ for all $i$.

\medskip

We will show now that $\sqrt[\cE]{(D)}=\rr{(D)}$ by induction on $n$.
For $n=1$ this is Example \ref{ex:classical}. Now we assume that
$\sqrt[\cE]{(D_1)} \subseteq \rr{(D_1)}$ and $\sqrt[\cE]{(D_2)} \subseteq \rr{(D_2)}$
and claim that $\sqrt[\cE]{(D_1 \oplus D_2)} \subseteq \rr{(D_1 \oplus D_2)}$.
Pick any $R =\left[ R_1 \ R_2 \right] \in \sqrt[\cE]{(D_1 \oplus D_2)}$.
>From the definition of $\sqrt[\cE]{\cdot}$ we get that $R_1(a)v_1+R_2(a)v_2=0$ for every $a \in \RR$, $v_1 \in F^{n_1}$
and $v_2 \in F^{n_2}$ such that $D_1(a)v_1=0$ and $D_2(a)v_2=0$. Inserting either $v_2=0$ or $v_1=0$
we get (for each $i$) that $R_i(a)v_i=0$ for every $a \in \RR$ and $v_i \in F^{n_i}$ such that $D_i(a)v_i=0$.
Note that $R_i(a)v_i=0$ implies $R_i(a)^\ast R_i(a)v_i=0$ and that $R_i^\ast R_i$ is a square matrix of size $n_i$.
It follows that $R_i^\ast R_i \in \sqrt[\cE]{(D_i)} \subseteq \rr{(D_i)}$.
Let $j_i \colon M_{n_i}(F[x_1]) \to M_{n_1+n_2}(F[x_1])$ be the natural embeddings.
Since $j_i$ are $\ast$-homomorphisms and $J_i=\rr{(j_i(D_i))}$ are real left ideals,
$j_i^{-1}(J_i)$ are also real left ideals,
so that $\rr{(D_i)} \subseteq j_i^{-1}(J_i)$.
Since $j_i(D_i)$ is the product of $j_i(I_{n_i})$ and $D_1 \oplus D_2$, it belongs to $(D_1 \oplus D_2)$.
Hence, for $i=1,2$,
\[
j_i(R_i^\ast R_i) \in
j_i(\rr{(D_i)}) \subseteq
\rr{(j_i(D_i))} \subseteq \rr{(D_1 \oplus D_2)}.
\]
Since $\left[ R_1 \ 0 \right]^\ast \left[ R_1 \ 0 \right] = j_1(R_1^\ast R_1)$
and $\left[ 0 \ R_2 \right]^\ast \left[ 0 \ R_2 \right] = j_2(R_2^\ast R_2)$
belong to $\rr{(D_1 \oplus D_2)}$,
$\left[ R_1 \ 0 \right]$ and $\left[ 0\ R_2  \right]$ also belong to
$\rr{(D_1 \oplus D_2)}$. Therefore, $\left[ R_1 \ R_2 \right]=
\left[ R_1 \ 0 \right]+\left[ 0\ R_2  \right] \in \rr{(D_1 \oplus D_2)}$.
\end{proof}


\begin{thebibliography}{999}



\bibitem{d} N. Bourbaki,
\textit{Topological vector spaces, Chapters 1-5}, English edition,
Springer Verlag, Masson 1987.




\bibitem{dub} D. W. Dubois, A nullstellensatz for ordered fields, Ark. Mat. 8 (1969), 111--114.

\bibitem{efr} G. Efroymson,  Local reality on algebraic varieties, J. Algebra 29 (1974),
113--142.




\bibitem{hmp} J. W. Helton, S. McCullough, M. Putinar,
Strong majorization in a free $\ast$*-algebra,
Math. Z. 255 (2007), no. 3, 579–-596.

\bibitem{lam} T. Y. Lam,
An introduction to real algebra,
Rocky Mountain J. Math. 14 (1984), no. 4, 767–-814.

\bibitem{llr} J. B. Lasserre, M. Laurent, P. Rostalski,
Semidefinite characterization and computation of zero-dimensional real radical ideals,
Found. Comput. Math. 8 (2008), no. 5, 607–-647.

\bibitem{mm} M. Marshall, $\ast$-orderings on a ring with involution,
Comm. Algebra 28 (2000), no. 3, 1157–-1173.

\bibitem{mcr} J. C. McConnell, J. C. Robson,  Noncommutative Noetherian rings. With the cooperation of L. W. Small. Revised edition.
Graduate Studies in Mathematics, 30. American Mathematical Society, Providence, RI, 2001. xx+636 pp. ISBN: 0-8218-2169-5.

\bibitem{ris} J.-J. Risler,  Une caract\'{e}risation des id\'{e}aux des vari\'{e}t\'{e}s alg\'{e}briques r\'{e}elles. C.R.A.S.
Paris, s\'{e}rie A, 271 (1970), 1171--1173.


\bibitem{ss} S. Shankar, The Nullstellensatz for Systems of PDE,
Advances in Applied Mathematics 23 (1999), 360--374.

\bibitem{sch} K. Schm\" udgen,
\textit{Unbounded Operator Algebras and Representation Theory},
Birkh\" auser Verlag, Basel 1990.

\bibitem{sch2} K. Schm\" udgen,
Noncommutative real algebraic geometry -— some basic concepts and first ideas.
Emerging applications of algebraic geometry, 325–-350,
IMA Vol. Math. Appl., 149, Springer, New York, 2009.
\end{thebibliography}
\end{document}